\tikzset{
        ->,  
        >=stealth, 
        node distance=1.8cm, 
        every state/.style={thick, fill=gray!10}, 
        initial text=$ $, 
        }
\theoremstyle{definition}
\newtheorem{defn}{Definition}[section]
\theoremstyle{plain}
\newtheorem{maintheorem}{Theorem}
\newtheorem{lem}[defn]{Lemma}
\newtheorem{sublem}[defn]{Sublemma}
\newtheorem{thm}{Theorem}
\newtheorem*{thm*}{Theorem}
\newtheorem{prop}[defn]{Proposition}
\theoremstyle{remark}
\newtheorem{rem}[defn]{Remark}
\numberwithin{defn}{section}
\newcommand{\eps}{\varepsilon}
\renewcommand{\phi}{\varphi}
\newcommand{\quand}{\quad\text{ and } \quad}
\newcommand{\dto}[1]{\overset{d}{\longrightarrow}}
\def\namedlabel#1#2{\begingroup
    #2%
    \def\@currentlabel{#2}%
    \phantomsection\label{#1}\endgroup
}
\begin{document}

\title[Persistent Non-Statistical Dynamics]{Persistent Non-Statistical Dynamics \\ 
in One-Dimensional Maps}

\author{Douglas Coates}
\address{University of Exeter, Exeter, UK}
\email{dc485@exeter.ac.uk}

\author{Stefano Luzzatto}
\address{Abdus Salam International Centre for Theoretical Physics (ICTP), Trieste, Italy}
\email{luzzatto@ictp.it}
\urladdr{www.stefanoluzzatto.net}

\thanks{D.C. was partially supported by the ERC project 692925 \emph{NUHGD} and by the Abdus Salam ICTP visitors program.}
\date{\today}

\subjclass[2020]{37A10, 37C40, 37C83, 37E05}

\date{2 September 2023}


\begin{abstract}
We study a  class \( \widehat{\mathfrak{F}}  \) of one-dimensional full branch maps introduced in \cite{CoaLuzMub22},  admitting two indifferent fixed points as well as critical points and/or singularities with unbounded derivative. We show that  \( \widehat{\mathfrak{F}}  \) can be partitioned into  3 pairwise disjoint subfamilies \( \widehat{\mathfrak{F}}  = \mathfrak{F}  \cup \mathfrak{F}_\pm  \cup \mathfrak{F}_*\)  such that all  \( g \in \mathfrak{F} \) have a unique physical measure equivalent to Lebesgue, all \( g \in \mathfrak{F}_{\pm} \) have a physical measure  which is a Dirac-\(\delta\) measure on one of the (repelling) fixed points, and   all \( g \in \mathfrak{F}_{*} \) are non-statistical and in particular have no physical measure. Moreover we show that  these subfamilies are \emph{intermingled}: they can all be approximated by maps in the other subfamilies in natural topologies. 
\end{abstract}

\maketitle


\section{Introduction}

We study the existence and, especially, the \emph{non-existence} of physical measures in a large family of interval maps  \(   \widehat{\mathfrak{F}} \) which was introduced in \cite{CoaLuzMub22}. We start with heuristic and conceptual overview of results. Then in  Section~\ref{sec:results} we give the formal  definition of the family \(   \widehat{\mathfrak{F}} \) and the precise technical statements of our results.   In Section \ref{sec:recall} we recall the main construction and key estimates from \cite{CoaLuzMub22} which will be required,  and in   Sections \ref{sec:physical}-\ref{sec:density} we prove our results.

\subsection{Overview of Results}\label{sec:overview}

The family   \(   \widehat{\mathfrak{F}} \)  consists of full branch maps with two orientation preserving branches, which are all in the same  topological conjugacy class of uniformly expanding maps such as \( f(x)=2x\)~mod~1, in particular they are all topologically conjugate to each other. Depending on a number of parameters,  they may however exhibit quite different features: the two fixed points are always topologically repelling but may be either hyperbolic or neutral and the branches may have critical points and/or singularities with infinite derivative.   \( \widehat{\mathfrak{F}} \) contains uniformly expanding maps and  well-known intermittent maps as well as many other maps which, as far as we know have not been studied before, see \cite{CoaLuzMub22} for an extensive discussion and review of the literature.   Figure \ref{fig:fig} shows some possible graphs of the maps in \( \widehat{\mathfrak{F}} \), and see Section \ref{sec:defn} for the formal definition.

\begin{figure}[h]\label{fig:1}
 \centering
  \begin{subfigure}{.3\textwidth}
    \centering
    \includegraphics[width=\textwidth]{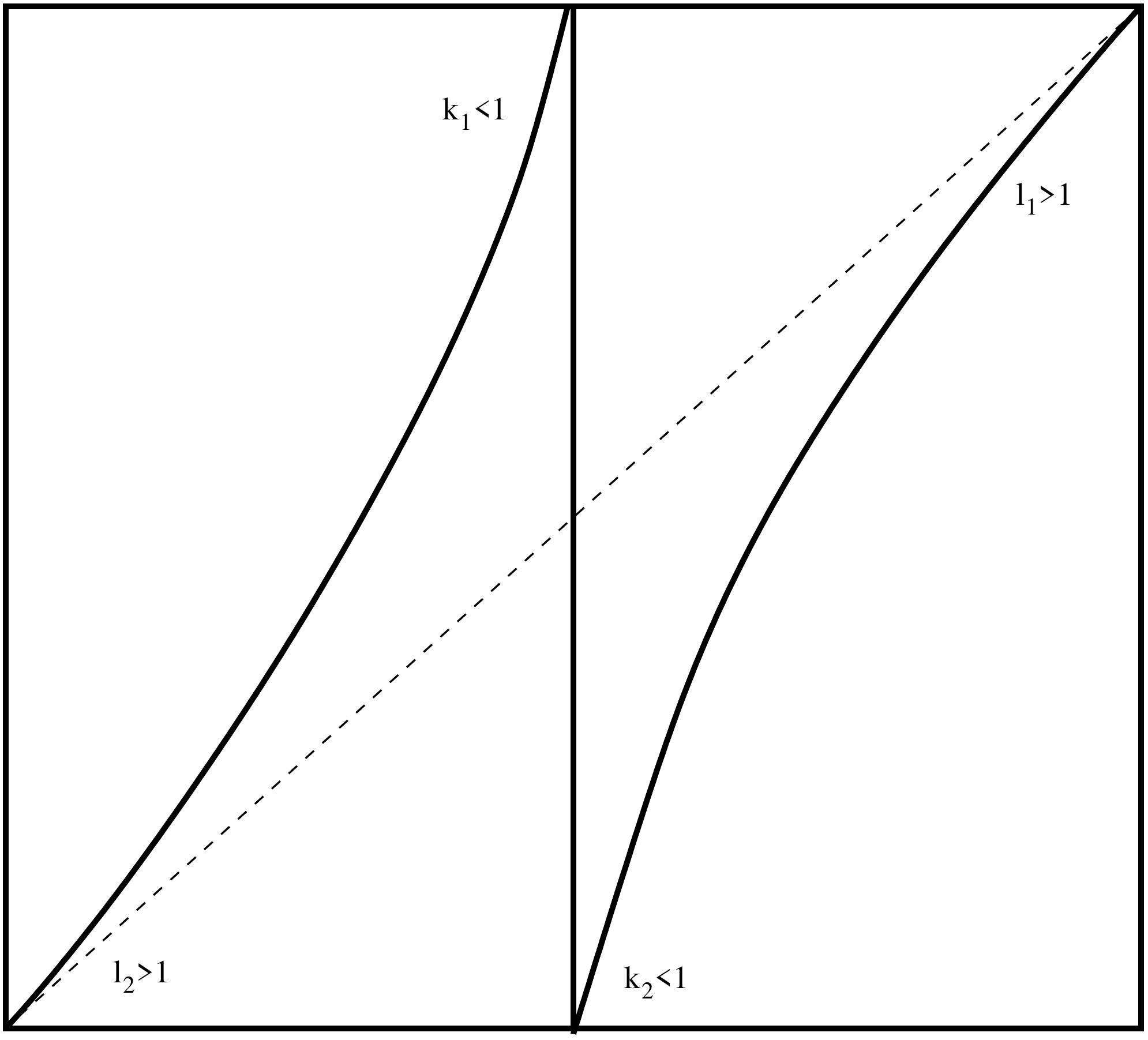}
  \end{subfigure}%
 \quad 
  \begin{subfigure}{.3\textwidth}
    \centering
    \includegraphics[width=\textwidth]{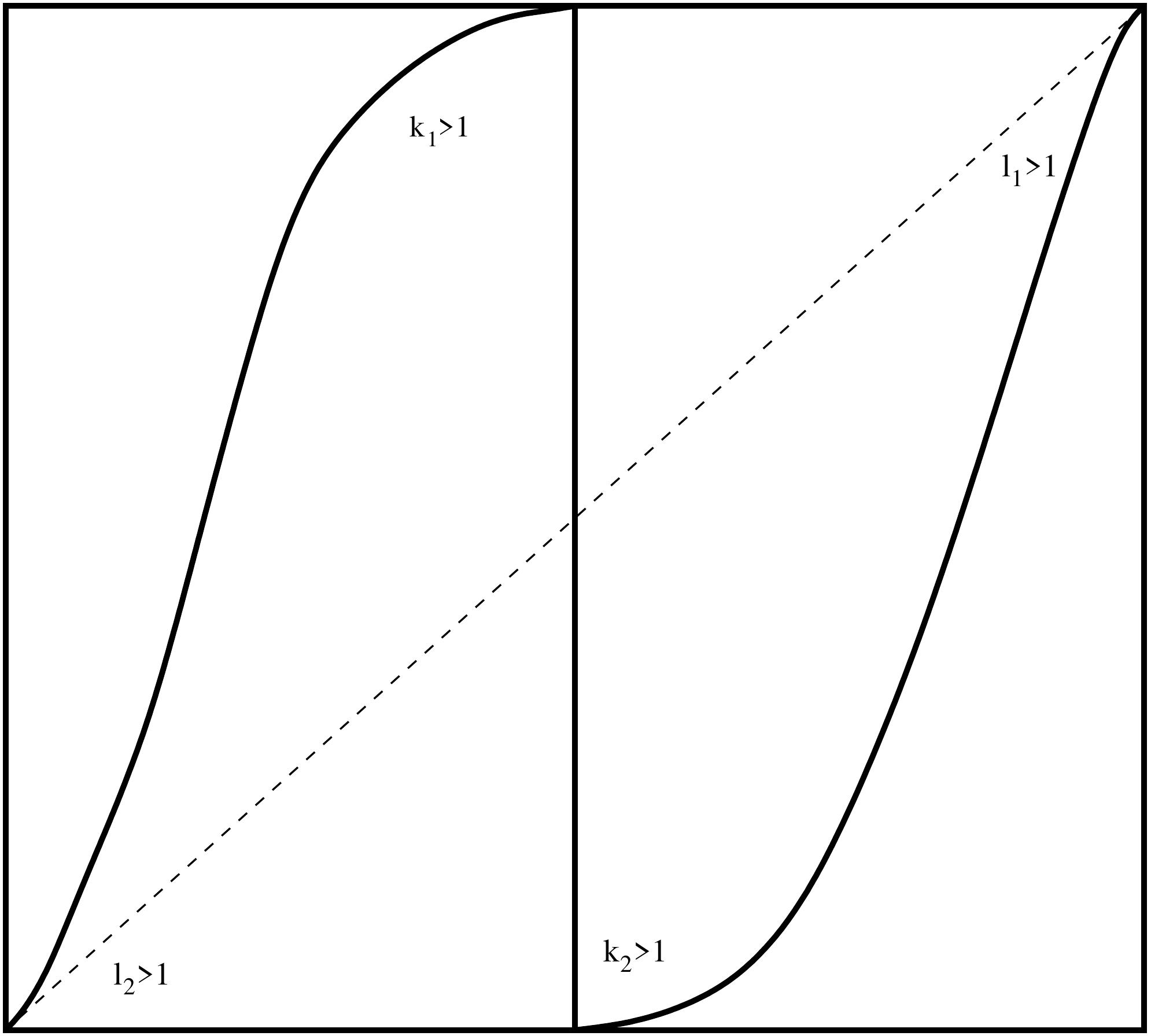}
  \end{subfigure}
\quad
  \begin{subfigure}{.3\textwidth}
    \centering
    \includegraphics[width=\textwidth]{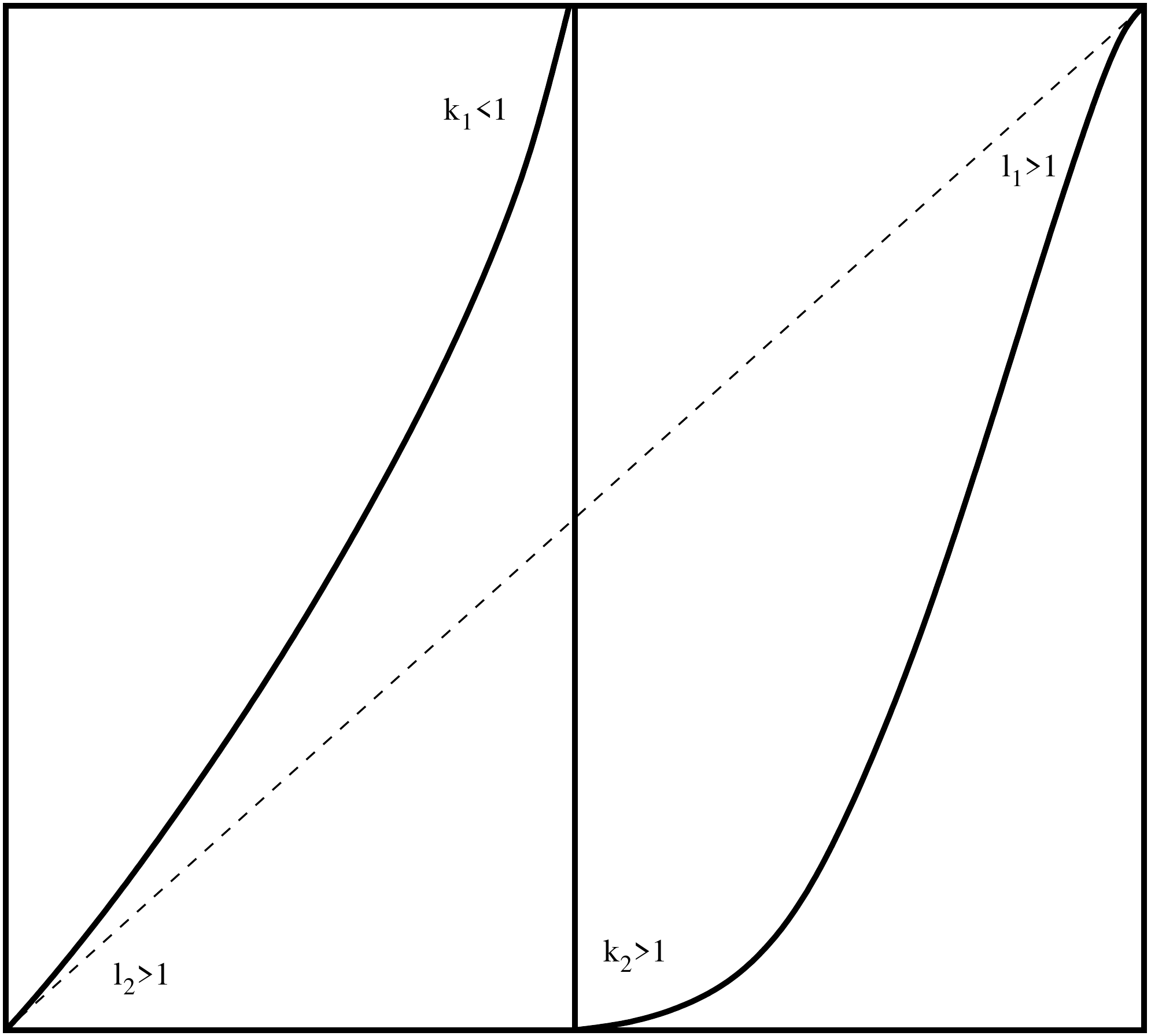}
  \end{subfigure}
  \caption{Graph of $g$ for various possible values of parameters.}\label{fig:fig}
\end{figure}

The first main result of this paper is a complete classification of  maps in \( \widehat{\mathfrak{F}}  \) from the point of view of the kind of physical measure they admit (or not). In particular we will prove the following. 

\begin{maintheorem}\label{mainthmA}
The family of maps  \( \widehat{\mathfrak{F}}  \) is the  union of three non-empty pairwise disjoint subfamilies 
\begin{equation}\label{eq:subfam}
   \widehat{\mathfrak{F}}  = \mathfrak{F}  \cup \mathfrak{F}_\pm  \cup \mathfrak{F}_*
\end{equation}
satisfying the following properties: 

\medskip

1)  all \( g \in \mathfrak{F} \) have a physical measure equivalent to Lebesgue;

2) all \( g \in \mathfrak{F}_{\pm} \) have a physical measure supported on a repelling fixed point;

3)  all \( g \in \mathfrak{F}_{*} \) are non-statistical and in particular have no physical measure.
\end{maintheorem}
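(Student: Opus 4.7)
The plan is to partition $\widehat{\mathfrak{F}}$ according to the local exponents at the two fixed points $0$ and $1$, as provided by the parametrization of the family recalled in Section~\ref{sec:recall}. Writing $\alpha_0,\alpha_1 \geq 0$ for these exponents (so the fixed point is uniformly expanding when $\alpha_i = 0$ and neutral when $\alpha_i > 0$), I would define $\mathfrak{F}$ by $\max\{\alpha_0,\alpha_1\} < 1$, $\mathfrak{F}_\pm$ by exactly one of $\alpha_0,\alpha_1$ being $\geq 1$, and $\mathfrak{F}_*$ by both $\alpha_0,\alpha_1 \geq 1$ (further parameters controlling critical points play a secondary role once the tail estimates of \cite{CoaLuzMub22} are in hand). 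Non-emptiness and pairwise disjointness are then immediate, and each case is handled through the inducing / Young-tower construction of \cite{CoaLuzMub22}: a uniformly expanding full branch induced map $\widehat g$ on a reference set $\Delta$ bounded away from $\{0,1\}$, with return time $\tau:\Delta\to\mathbb N$ whose tail is already controlled in terms of $\alpha_0,\alpha_1$.

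For part (1), the condition $\max\{\alpha_0,\alpha_1\} < 1$ forces $\tau$ to be Lebesgue-integrable on $\Delta$. The induced system then admits a unique acip $\widehat\mu$ equivalent to Lebesgue, and the standard pull-back
\[
\mu \;=\; \frac{1}{\int \tau\, d\widehat\mu} \sum_{k \geq 0} g^k_*\bigl(\widehat\mu|_{\{\tau > k\}}\bigr)
\]
gives a $g$-invariant probability measure equivalent to Lebesgue on $[0,1]$. Ergodicity descends from that of $\widehat g$, and a folklore argument identifies $\mu$ as the unique physical measure with Lebesgue-full basin.

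For part (2), suppose $\alpha_1 \geq 1 > \alpha_0$, so that $1$ is the dominant fixed point (the other case being symmetric). Here $\tau$ is not integrable, but the tail estimates from \cite{CoaLuzMub22}, combined with a Borel--Cantelli argument in the spirit of Thaler's work on infinite-measure intermittent maps, should show that for Lebesgue-a.e.\ $x$ and every $\varepsilon > 0$,
\[
\frac{1}{n}\#\bigl\{0 \leq k < n : g^k(x) \notin B_\varepsilon(1)\bigr\} \longrightarrow 0.
\]
This directly gives weak-$*$ convergence of the empirical measures to $\delta_1$ on a full Lebesgue basin, which is the required physical measure.

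Part (3), the case of two competing absorbing fixed points, is where I expect the main obstacle. The goal is to exhibit, for Lebesgue-a.e.\ $x$, subsequences $n_k, m_k \to \infty$ along which the empirical averages converge respectively to $\delta_0$ and $\delta_1$, thereby precluding any physical measure. Via the Markov structure of $\widehat g$, arbitrarily long excursions toward either fixed point occur infinitely often along a.e.\ orbit; when $\alpha_i \geq 1$ on both sides, a single such excursion typically has length exceeding the total time accumulated before it, so the empirical measure taken immediately after that excursion is essentially a Dirac on the corresponding fixed point. Making this precise requires controlling the joint distribution of consecutive excursion lengths under the induced measure and propagating the control to Lebesgue measure via absolute continuity, then showing that for a.e.\ $x$ both types of dominating excursion recur infinitely often. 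The delicate combinatorial step — quantifying how much of the past time average a single long excursion overwhelms, and verifying that the alternation between sides is frequent enough to force oscillation between $\delta_0$ and $\delta_1$ rather than stabilization at a convex combination — is the core difficulty, and effectively reduces non-statistical behaviour to a renewal-type question on excursion times.
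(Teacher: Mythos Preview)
Your proposed partition is not the one the paper uses, and in fact it is not correct: the three subclasses in the paper are distinguished not by how many of the two tail exponents exceed $1$, but by whether they are \emph{equal} once at least one is $\geq 1$. Concretely, writing $\beta^-,\beta^+$ for the effective exponents (which in this family are products $\ell_i k_j$ of the fixed-point order and the critical/singular order, not the $\ell_i$ alone), the paper sets $\mathfrak{F}=\{\beta<1\}$, $\mathfrak{F}_\pm=\{\beta\geq 1,\ \beta^-\neq\beta^+\}$, and $\mathfrak{F}_*=\{\beta\geq 1,\ \beta^-=\beta^+\}$. In particular there are maps with \emph{both} $\beta^-,\beta^+\geq 1$ but $\beta^-\neq\beta^+$; you place these in $\mathfrak{F}_*$ and attempt to prove non-statistical behaviour, whereas they actually lie in $\mathfrak{F}_\pm$ and admit a Dirac physical measure on the stickier fixed point. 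The mechanism is that even when both excursion-time tails are non-integrable, the heavier one dominates: the paper invokes an almost-sure growth estimate (from \cite{GalHolPer21}) giving $\tau_k^{\pm}\asymp k^{\beta^\pm\pm\eps}$, so $\tau_k^+/\tau_k^-\to 0$ when $\beta^- > \beta^+ > 1$, and a separate argument via Sawyer's $L^q$ maximal inequality handles the boundary $\beta^+=1<\beta^-$. Your renewal heuristic for oscillation would therefore fail for these maps.

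For the genuine non-statistical class $\beta^-=\beta^+\geq 1$, your outline is in the right spirit (a Borel--Cantelli argument showing that infinitely often a single excursion dominates all prior time), but one ingredient you do not mention turns out to be essential: to verify the quasi-independence hypothesis of the R\'enyi--type Borel--Cantelli lemma, the paper needs a uniform lower bound on $\hat\mu(\tau_k< k^\beta(T+d_k))$, and this is obtained from a \emph{stable law} for $\tau_k$ under the induced Gibbs--Markov map (Lemma~\ref{lem:stable-law}). Without that distributional input the divergence of $\sum\hat\mu(A_k)$ and the decorrelation $\hat\mu(A_n\cap A_k)\lesssim\hat\mu(A_n)\hat\mu(A_k)$ are not accessible from the tail estimates and bounded distortion alone.
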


The definitions of physical measure and  and non-statistical maps are given in Section \ref{sec:introphsy},  and the families \( \mathfrak{F}, \mathfrak{F}_\pm,  \mathfrak{F}_*\) are  defined explicitly in Section \ref{sec:stat} in terms of various parameters of the maps in \( \widehat{\mathfrak{F}} \) . Item 1) in Theorem \ref{mainthmA}, i.e. the existence of physical measures equivalent to Lebesgue for  maps in  \( \mathfrak{F} \),  was  proved  in \cite{CoaLuzMub22}, where it was also shown that such physical measure satisfy a number of statistical properties such as decay of correlations and various limit theorems  Our focus in this paper is  therefore on the complementary families  \( \mathfrak{F}_{\pm} \) and \(  \mathfrak{F}_{*} \). We note  that the three families \( \mathfrak{F},  \mathfrak{F}_{\pm},  \mathfrak{F}_{*} \) form a partition of \( \widehat{\mathfrak{F}}  \), \emph{there are no cases in which we are not able to obtain a conclusion} (although, as we shall see in the proofs, there are some boundary cases in which more sophisticated arguments are required).

A natural question concerns the  ``\emph{size}'' and ``\emph{structure}'' of the families  \(  \mathfrak{F},  \mathfrak{F}_\pm, \mathfrak{F}_*\) inside \(   \widehat{\mathfrak{F}}  \),  and perhaps one of the most surprising and unexpected results of this paper is that they are \emph{intermingled} and  \emph{dense} in some natural subsets of  \( \widehat{\mathfrak{F}} \) in some appropriate topologies.  Due to the presence of the discontinuity we cannot use  the standard \( C^{r}\), or even \( C^{0}\), metric on  \(   \widehat{\mathfrak{F}}  \), since the maps are not even continuous. However there are pairs of maps  \(  f, g\in  \widehat{\mathfrak{F}}  \) whose \emph{difference} \( f-g\in C^{r}\), and which therefore may be considered as \( C^{r} \) \emph{perturbations} one of the other. This observation motivates the definition of a natural  extended  metric  on \(   \widehat{\mathfrak{F}}  \) defined as follows:
for any \(  f,g \in \widehat{ \mathfrak{F} } \) and \( r \geq 0 \) we let
\begin{equation*}\label{eq:approx}
d_{r}(f,g)\coloneqq \begin{cases} \|f - g \|_{C^r} &\text{ if }   f-g\in C^{r} \\
\infty &\text{ otherwise}. 
\end{cases}
\end{equation*}
For simplicity we will just refer to it as \(C^{r}\) \emph{metric} on  \(   \widehat{\mathfrak{F}}  \)\footnote{Notice that we can ``normalize'' this extended metric to give a standard bounded metric on  \(   \widehat{\mathfrak{F}}  \) by defining 
  \(
    \tilde d_r ( f, g ) \coloneqq  { \tilde d_{r} ( f ,g ) }/{(1 + \tilde d_{r} (f , g ))}
  \)
  when \( d_{r} ( f ,g )< \infty \) and \(  \tilde d_{r} ( f ,g ) \coloneqq 1 \) otherwise.  
The  metrics \( d_{r}\) and \( \tilde d_{r}\) lead to equivalent topologies and so for our purposes it does not really matter which one we use.}.  
There are of course many maps which are at infinite distance from each other but, as we shall see, there is nevertheless a very rich structure in every neighbourhood of maps \( f \in \widehat{ \mathfrak{F} } \) as we have  the following surprising and remarkable result. 

\begin{maintheorem}\label{thm:densityC0}
Each of the families  \(  \mathfrak{F} \),  \( \mathfrak{F}_{\pm}\), \( \mathfrak{F}_*\)  are \(C^0\)-dense in \(\widehat{ \mathfrak{F}}\). 
\end{maintheorem}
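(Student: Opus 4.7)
The plan is to produce, for any $f\in\widehat{\mathfrak{F}}$ and any $\varepsilon>0$, elements of each of the three subfamilies within $d_0$-distance $\varepsilon$ of $f$ by making \emph{local} perturbations supported in arbitrarily small neighborhoods of the indifferent fixed points and of the critical points and singularities. Because these special points are away from the discontinuity, such perturbations preserve the discontinuity structure of $f$, so $f-g\in C^0$ and $d_0(f,g)$ is finite; and the $C^0$ size of the perturbation can be driven to $0$ by shrinking the neighborhoods. This reduces the theorem to the claim that arbitrarily small local surgeries can move $f$ into whichever of the three classes we wish.

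First I would recall, from the formal definitions in Section~\ref{sec:stat}, that the partition \eqref{eq:subfam} is governed by finitely many local exponents attached to each map, namely the tangency orders $\ell_\pm$ at the two indifferent fixed points together with the orders $\kappa$ describing the local behaviour at the critical points and singularities. Each of the three subfamilies corresponds to a non-empty region in the space of admissible exponents: $\mathfrak{F}$ and $\mathfrak{F}_\pm$ are cut out by strict inequalities (hence correspond to open regions), whereas $\mathfrak{F}_*$ lives on a codimension-one boundary locus defined by equalities among these exponents.

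Next I would carry out the surgery. Near an indifferent fixed point at $0$, where $f(x)=x+cx^{1+\ell}+o(x^{1+\ell})$, I would replace the local profile on an interval of radius $r$ by $\tilde f(x)=x+c'x^{1+\ell'}+\cdots$ with the chosen target exponent $\ell'$, and interpolate smoothly back to $f$ just outside; the resulting $C^0$ perturbation has size $O(r)+O(r^{1+\min(\ell,\ell')})$, which tends to $0$ as $r\to 0$. Analogous local modifications near the second fixed point and near each critical point or singularity adjust the remaining exponents. Choosing the target exponents in the parameter region associated with $\mathfrak{F}$, $\mathfrak{F}_\pm$, or $\mathfrak{F}_*$, we obtain $g\in\widehat{\mathfrak{F}}$ in the desired class with $\|f-g\|_{C^0}<\varepsilon$.

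The main obstacle is verifying admissibility: after the surgery, the map must still satisfy every hypothesis defining $\widehat{\mathfrak{F}}$ (full-branch structure, monotonicity, expansion outside the surgery regions, compatibility at the junction of the local model with the global map), so the interpolation has to be performed compatibly with all of these constraints. Approximation by $\mathfrak{F}_*$ is the most delicate case, since hitting the boundary equality locus requires exact tuning rather than an open condition; however, because the target exponent $\ell'$ (or $\kappa'$) varies continuously over a whole interval of admissible values, the intermediate value theorem guarantees that we can land on the relevant equality locus while still making the surgery radius $r$ as small as we like.
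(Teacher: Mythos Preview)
Your strategy is essentially the paper's: perform local surgery in small neighbourhoods of the special points to adjust the exponents governing the classification, with the $C^0$ size of the perturbation controlled by the radius of the surgery region. The paper packages this as Theorem~\ref{thm:density-main}, from which Theorem~\ref{thm:densityC0} follows immediately since $\tilde r, r_\pm, r_* \geq 0$ always.

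Two simplifications relative to your outline. First, the paper perturbs \emph{only} near the two fixed points, changing $\ell_1,\ell_2$ to target values $\tilde\ell_1,\tilde\ell_2$ while leaving $k_1,k_2$ untouched. This already suffices: since $\beta^- = \ell_1 k_2$ and $\beta^+ = \ell_2 k_1$ with $k_1,k_2>0$ fixed, one can reach any pair $(\beta^-,\beta^+)$ by choosing $\tilde\ell_1,\tilde\ell_2$ alone. Second, no intermediate value argument is needed for $\mathfrak{F}_*$: the equality $\tilde\ell_1 k_2 = \tilde\ell_2 k_1 \geq 1$ is arranged by an explicit choice (the paper tabulates the choices in Table~1).

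The genuine gap in your proposal is the step you flag as ``the main obstacle'' but do not carry out: verifying that the perturbed map still satisfies \ref{itm:A2}. This is not routine. Shrinking the neighbourhoods $U_{\pm 1}$ to $\widetilde U_{\pm 1}$ enlarges the region on which the expansivity condition must be checked, and the interpolation region $[\tilde x_1,x_1]$ is precisely where the new map is neither $f$ nor the model $h_1$. The paper devotes Lemma~\ref{lem:fhat} to this, and the argument is delicate: it requires the auxiliary constants $\tilde b_1,\tilde b_2$ to be chosen so that conditions \eqref{eq:cond-monotonicty}--\eqref{eq:cond-on-b2} hold, and then establishes the derivative inequality \eqref{eq:phiexpansion} via a case analysis exploiting the explicit form of $g$ on both $U_{0+}$ and the interpolation region. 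Had you also perturbed near the critical point as you suggest, this verification would become harder still, since the argument uses the unchanged form $g(x)=-1+a_2 x^{k_2}$ on $U_{0+}$ in an essential way (equation~\eqref{eq:g-p-over-g-phi}). So your plan is sound, but the proof is in exactly the part you have left open.
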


Theorem \ref{thm:densityC0} says, more precisely, than any  \( f \in \widehat{ \mathfrak{F} } \), belonging to any  of the classes \(  \mathfrak{F} \),  \( \mathfrak{F}_{\pm}\), \( \mathfrak{F}_*\),  can be approximated arbitrarily closely in the \( C^{0}\) metric by maps in the other two classes. For example, maps with a physical measure equivalent to Lebesgue, including  uniformly expanding maps, can be \(C^0\) approximated both by maps with physical measures given by  Dirac-delta measures on the fixed points and also by maps without physical measures. Similarly, maps  without physical measures can be \(C^0\) approximated both by maps with a physical measure equivalent to Lebesgue and by maps with  Dirac-delta physical measures on the fixed points. 

Theorem \ref{thm:densityC0} will be proved as a special case of a more technical but much more general result, see Theorem~\ref{thm:density-main} below, which also implies that 
maps in the families   \(  \mathfrak{F} \),  \( \mathfrak{F}_{\pm}\), \( \mathfrak{F}_*\) can be approximated by maps in the other families in arbitrarily regular metrics, depending on the maps. 
 In particular,  for the case \( r = 1 \) we define the set 
\[
\widetilde{ \mathfrak{F}}\coloneqq \{ f \in  \widehat{ \mathfrak{F}}: \text{both fixed points are neutral fixed points}\} 
\]
and we have  the following perhaps even more surprising and remarkable result. 

\begin{maintheorem}\label{thm:densityC1}
Each of the families  \(  \mathfrak{F} \),  \( \mathfrak{F}_{\pm}\), \( \mathfrak{F}_*\)  are \(C^1\)-dense in \(\widehat{ \mathfrak{F}}\). 
\end{maintheorem}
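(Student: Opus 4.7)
\bigskip

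\noindent\textbf{Proof plan.} The plan is to deduce Theorem~\ref{thm:densityC1} from the more general approximation scheme underlying Theorem~\ref{thm:density-main}, by constructing for each $f\in\widehat{\mathfrak{F}}$ an explicit one-parameter family of perturbations $(f_s)_{s>0}\subset\widehat{\mathfrak{F}}$ that can be tuned to land in any prescribed one of the three subclasses while satisfying $\|f_s-f\|_{C^1}\to 0$ as $s\to 0$. The underlying reason such perturbations exist at $C^1$ cost is that membership of $f$ in one of $\mathfrak{F}$, $\mathfrak{F}_\pm$, $\mathfrak{F}_*$ is ultimately controlled by a finite list of tangency exponents at the two fixed points and order parameters at critical points/singularities, and these can be moved by perturbations whose first derivatives vanish at the relevant base points (which is the source of the $C^1$-smallness and the reason $\widetilde{\mathfrak{F}}$ is the natural arena).

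\smallskip

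\noindent First I would recall from Section~\ref{sec:stat} the explicit description of the partition $\widehat{\mathfrak{F}}=\mathfrak{F}\cup\mathfrak{F}_\pm\cup\mathfrak{F}_*$ in terms of inequalities and equalities between the exponents $\alpha_0,\alpha_1$ at the fixed points and the analogous order parameters at the critical points and singularities. This reduces the problem to a statement about density in a finite-dimensional parameter space: given a parameter vector for $f$, I must produce arbitrarily nearby parameter vectors from each of the three regions in such a way that the resulting map is $d_1$-close to $f$.

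\smallskip

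\noindent Second I would build the local perturbation. Near a neutral fixed point $p$ where $f$ has the local form $f(x)=x+c(x-p)^{1+\alpha}+\text{h.o.t.}$, I replace $\alpha$ by $\alpha+s$ on a small interval around $p$ and glue smoothly to the unperturbed $f$ outside using a bump function $\chi$ supported at definite scale. The difference then takes the schematic form
\[
(f_s-f)(x)=\chi(x)\,c(x-p)^{1+\alpha}\bigl[(x-p)^s-1\bigr]+\text{glueing correction},
\]
and a direct computation shows that both this expression and its derivative are $O(|s|)$ uniformly, precisely because $\alpha>0$ makes $(x-p)^{\alpha+s}\to 0$ as $x\to p$. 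An entirely analogous construction produces $C^1$-small perturbations of the orders at critical points and singularities, and of the higher-order coefficients at hyperbolic fixed points. At each step I check that the perturbed map still lies in $\widehat{\mathfrak{F}}$ (full branch, two orientation preserving branches, same classes of admissible singularities).

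\smallskip

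\noindent Third I would hit the three classes. Since the classification in parameter space is given by a partition with nonempty interiors for $\mathfrak{F}$ and $\mathfrak{F}_\pm$ and a common boundary stratum supporting $\mathfrak{F}_*$, the starting parameter vector for $f$ is always in the closure of each of the three regions, and an appropriate sign and magnitude of $s$ (together, if necessary, with a simultaneous perturbation of a second exponent) puts $f_s$ on the desired side of the boundary. This is the content of Theorem~\ref{thm:density-main}, from which Theorem~\ref{thm:densityC1} follows immediately.

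\smallskip

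\noindent\textbf{Main obstacle.} The delicate point is the $C^1$-smallness of the exponent-perturbation near a neutral fixed point: the derivative of $(x-p)^{1+\alpha+s}-(x-p)^{1+\alpha}$ is $(1+\alpha+s)(x-p)^{\alpha+s}-(1+\alpha)(x-p)^{\alpha}$, which converges to $0$ uniformly in $s$ only when $\alpha>0$, and one must choose the cutoff scale for $\chi$ so that the contribution of $\chi'$ to the $C^1$ norm is dominated rather than dominant. Once this scale is fixed, a routine but careful estimate shows that the total $C^1$ cost is $O(|s|)$, and since both critical/singular perturbations and exponent perturbations at neutral fixed points can be made this way, every parameter direction relevant to the classification can be swept at arbitrarily small $C^1$ distance, giving the density in all three subfamilies.
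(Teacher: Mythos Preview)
Your overall strategy---perturb the tangency exponents at the neutral fixed points and glue with a bump function---is exactly the mechanism the paper uses in Section~\ref{sec:density} to prove Theorem~\ref{thm:density-main}, from which Theorem~\ref{thm:densityC1} is indeed an immediate corollary. However, there is a genuine gap in your smallness argument.

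You claim the $C^1$ cost is $O(|s|)$, where $s$ is the change in the exponent $\alpha$, and you propose to take $s\to 0$. But membership in the target class typically forces a \emph{definite} exponent change: if $f\in\mathfrak F$ with, say, $\beta^-=\ell_1 k_2=\tfrac12$, then any $g\in\mathfrak F_*\cup\mathfrak F_\pm$ must have $\tilde\ell_1 k_2\geq 1$ (or the analogous inequality for $\tilde\ell_2$), so $|s|=|\tilde\ell_1-\ell_1|\geq 1/(2k_2)$ is bounded away from zero. Your assertion in the third step that ``the starting parameter vector for $f$ is always in the closure of each of the three regions'' is therefore false in parameter space: the three regions in \eqref{eq:def-subclasses} are separated by the hypersurface $\beta=1$, and a generic $f$ lies in the interior of one of them. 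An $O(|s|)$ estimate with $s$ bounded below gives you nothing.

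The paper resolves this with a different smallness mechanism. One fixes $\tilde\ell_1,\tilde\ell_2$ at whatever values are required by the target class (see the tables in Section~\ref{sec:conc}) and obtains $C^1$-smallness by shrinking the \emph{support} of the perturbation: one replaces $\ell_i$ by $\tilde\ell_i$ only on $[-1,\tilde x_1]$ (resp.\ $[\tilde x_2,1]$) with $x_1\to-1$ (resp.\ $x_2\to 1$), gluing on $[\tilde x_1,x_1]$ as in \eqref{eq:def-g-1}--\eqref{eq:def-g-2}. Lemma~\ref{lem:c-r-close} then gives $\|f-g\|_{C^{\lceil r_1\rceil}}\lesssim(1+x_1)^{1+r_1-\lceil r_1\rceil}$ with $r_1=\min\{\ell_1,\tilde\ell_1\}$, and when $f\in\widetilde{\mathfrak F}$ (both fixed points neutral) one has $r_1>0$, hence $\lceil r_1\rceil\geq 1$ and the right-hand side tends to zero. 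Your ``main obstacle'' paragraph gestures at the cutoff scale but still attributes the smallness to $|s|$; the fix is to make the support scale, not $s$, the parameter that goes to zero.
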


Theorem \ref{thm:densityC1} says that every \( C^{1}\) neighbourhood of every map \( f \in  \widetilde{ \mathfrak{F}} \) contains maps belonging to \emph{all three families} \(  \mathfrak{F} \),  \( \mathfrak{F}_{\pm}\), \( \mathfrak{F}_*\). In particular every map with a physical measure equivalent to Lebesgue can be \( C^{1}\) approximated by maps without physical measures, and vice-versa. 

Finally, we address the question of how \emph{persistent} is the dynamics corresponding to the families \(  \mathfrak{F} \),  \( \mathfrak{F}_{\pm}\), \( \mathfrak{F}_*\), i.e. how \emph{``large''} each family is and how \emph{``robust''} with respect to perturbations.  Given Theorems \ref{thm:densityC0} and \ref{thm:densityC1} above, none of these families can be \emph{open} in either the \( C^{0}\) or the \( C^{1} \) metrics as defined above, however we will see that there are several ways in which we can argue that each family is \emph{large} in some sense. In particular, letting \( \operatorname{supp} f \coloneqq \{ x : f(x) \neq 0 \} \) denote the support of function, we will prove the following result.

\begin{maintheorem}\label{mainthm:open}
  Let \( f \in \mathfrak{F}_{*} \). If \( g \in \widetilde{\mathfrak{F}} \) is such that
  \(
    \overline{ \operatorname{ supp } ( f-g ) }  \subset (-1,0)\cup (0,1),
  \)
  then \( g \in \mathfrak{F}_* \). \\
  The same statement is true if we replace \( \mathfrak{ F }_* \) with either of the other two subclasses \( \mathfrak{ F }, \mathfrak{ F }_{\pm} \).
\end{maintheorem}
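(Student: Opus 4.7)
The plan is to show that, for maps in \( \widetilde{\mathfrak{F}} \), membership in each of the three subfamilies \( \mathfrak{F}, \mathfrak{F}_\pm, \mathfrak{F}_* \) is a \emph{local} property, determined only by the germ of the map at the two indifferent fixed points \( \pm 1 \) and at the discontinuity \( 0 \). The hypothesis \( \overline{\operatorname{supp}(f-g)} \subset (-1,0) \cup (0,1) \) produces open neighbourhoods \( U_{-1}, U_0, U_1 \) of \( -1, 0, 1 \) on which \( f \equiv g \), so these germs are identical; combined with \( g \in \widetilde{\mathfrak{F}} \), this forces \( f \) and \( g \) to have neutral fixed points of the same tangency order at \( \pm 1 \), and the same one-sided branches near \( 0 \).

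The first step is to unwind the explicit definitions of the three subfamilies given in Section~\ref{sec:stat} and to observe that the defining inequalities involve only the local exponents \( \alpha_{\pm} \) at \( \pm 1 \) together with parameters encoded in the germ of each branch at \( 0 \). Since \( f \) and \( g \) share all of these local data, they satisfy exactly the same set of defining inequalities and hence lie in the same subfamily of \( \widehat{\mathfrak{F}} \). The same argument applies verbatim when \( \mathfrak{F}_* \) is replaced by \( \mathfrak{F} \) or \( \mathfrak{F}_{\pm} \).

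The remaining issue, and the main obstacle, is to make sure that this purely parameter-based classification really captures the dynamical content of Theorem~\ref{mainthmA}, i.e.\ that \( g \) actually exhibits the physical behaviour ascribed to its subfamily. This reduces to checking that the proofs of the three cases of Theorem~\ref{mainthmA} in Sections~\ref{sec:physical}-\ref{sec:density} use only information encoded in these germs. This is precisely the structure of the induced Markov construction recalled in Section~\ref{sec:recall}: after choosing the base of induction inside \( U_{-1}\cup U_0\cup U_1 \), the branches of the induced map and all the tail estimates for return times are localised near the indifferent fixed points, while the global dynamics enters only through a uniformly bounded distortion factor which does not affect the asymptotics of Birkhoff averages of continuous observables. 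Consequently the classification is preserved and \( g \) belongs to the same subclass as \( f \), as required.
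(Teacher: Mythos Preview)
Your first two paragraphs contain the complete argument and match the paper's proof exactly: since \( g \in \widetilde{\mathfrak{F}} \subset \widehat{\mathfrak{F}} \) by hypothesis, and the support condition forces \( f \equiv g \) on neighbourhoods of \( -1, 0, 1 \), the parameters \( \ell_1, \ell_2, k_1, k_2 \) in \ref{itm:A1} coincide, hence so do \( \beta^\pm \), and the subclasses in \eqref{eq:def-subclasses} are \emph{defined} purely by conditions on \( \beta^\pm \).

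Your third paragraph, however, is unnecessary and reflects a misreading of what must be proved. Membership in \( \mathfrak{F}, \mathfrak{F}_\pm, \mathfrak{F}_* \) is, by \eqref{eq:def-subclasses}, a parametric condition on \( \beta^\pm \), not a dynamical one; once you have shown \( \beta^\pm(g) = \beta^\pm(f) \) you are done. The dynamical content of Theorem~\ref{mainthmA} then applies to \( g \) automatically, because Theorems~\ref{thm:CoaLuzMub22}--\ref{thm:no-phys-measures} are stated and proved for \emph{every} map in the relevant subclass of \( \widehat{\mathfrak{F}} \). There is no need to re-examine the induced-map construction or the tail estimates for \( g \) separately.
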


Theorem \ref{mainthm:open} shows that  the families \(  \mathfrak{F} \),  \( \mathfrak{F}_{\pm}\), \( \mathfrak{F}_*\) are  open under a particular class of perturbations which is slightly more restrictive than those allowed under the general \( C^{r}\) metric defined above but still quite substantial. This is of course particularly remarkable when applied to  maps in \( \mathfrak{F}_{\pm}\) whose physical measures are Dirac-delta measures on fixed points, and  to maps in \( \mathfrak{F}_* \) without physical measures. As far as we know there are no other previously known examples of systems with non-statistical behaviour which is as robust as this. There are also other ways in which the families \(  \mathfrak{F} \),  \( \mathfrak{F}_{\pm}\), \( \mathfrak{F}_*\) are persistent but these need to be formulated in terms of the parameters which define the maps and we therefore postpone the statements to Section \ref{sec:open} below.

\subsection{Physical Measures}\label{sec:introphsy}

In this section we give the precise definition of physical measure  and non-statistical dynamics and give a brief discussion of relevant previously existing results. 

\subsubsection{Topological and Statistical Limits}
For completeness we start with some general background notions which help to motivate the definitions and the results. Given a set \( X \),  a map  \( f : X \to X \) determines a  Dynamical System by  defining the \(n\)'th iterate \( f^{n}=f\circ \cdots \circ f\) by the \(n\)-fold composition of \( f\) with itself, and for any \( x_{0}\in X \) we define the \(n\)'th iterate of \( x_{0} \) under \( f \) as the image \( x_{n}=f^{n}(x)\). We can think of  \( X \) as a \emph{state space}, or the collection of all possible configurations of some system, the map \( f \) as a \emph{force} or \emph{mechanism} which acts on the system, and \( x_{0}\) as an \emph{initial condition}. Then the sequence 
\[
\mathcal O(x) : =\{x_{n}\}_{n=0}^{\infty},
\]
which we call the (forward) \emph{orbit} of \( x_{0}\), denotes the \emph{evolution in time} of the system starting from the initial condition \( x_{0}\). 
The main objective of the Theory of Dynamical Systems is essentially that of describing the structure of  orbits and how they depend on the initial condition \( x_{0}\) and on the map \( f \).   If \( X \) is a \emph{topological space}  we can define the \emph{omega-limit} set
\(
\omega(x):= \{y\in X: y \text{ is an accumulation point of the sequence } \mathcal O(x)\},
\)
which gives information about the asymptotic nature of the orbit from a \emph{topological} point of view. If \( X \) is a \emph{measure space}  we can define the Dirac-delta point mass measures \( \delta_{x_{k}}\) at each point of the orbit and use this to describe the orbit  by equidistributing the mass on the first \( n \) terms of the orbit, giving a sequence 
\[
\mu_{n} (x_{0}) \coloneqq \frac{1}{n} \sum_{ k = 0 }^{ n - 1 }
\delta_{x_{k}}
\]
of  probability measures associated to the initial condition \( x_{0}\). If this sequence \emph{converges}, for example in the weak star-topology, i.e. if there exists a probability measure \( \mu \) such that 
\begin{equation}\label{eq:conv}
 \mu_{n}(x_{0}) \to \mu, 
 \end{equation}
then  \( \mu_{n}\) approximates \( \mu \) but, most importantly,  for all sufficiently  large \( n \geq 0 \),  \( \mu\) \emph{approximates} \( \mu_{n}\),  and therefore \( \mu \) gives an asymptotic description of the orbit from a \emph{statistical} point of view. 

Notice that if \( X \) is a metric space we can describe each orbit \emph{from both a topological and a statistical point of view}. In many cases these two descriptions are intuitively consistent one with the other, for example if \( X \) is a complete metric space and \( f \) is a contraction and \( p \in X \) is the unique fixed point of \( f\), then  is easy to check that for any initial condition \(x_{0}\in X \) the points of the orbit of \( x_{0}\) converge to \( p \) and therefore  \( \omega(x_{0})=\{p\}\) and \( \mu_{n}(x_{0}) \to \delta_{p}\).  Similarly, for irrational circle rotations it is relatively easy to check that the orbit of every point \( x_{0}\in \mathbb S^{1}\) is dense in \( \mathbb S^{1} \), and therefore \( \omega(x_{0})=\mathbb S^{1}\), and it is a classical (but non-trivial) result that every orbit is uniformly distributed and therefore \( \mu_{n}(x) \to \) \emph{Lebesgue}  on \( \mathbb S^{1}\). However this is not always the case and sometimes, such as in several examples which will be discussed below, and   in some of the cases mentioned in our results, the topological and statistical description depend on the initial condition and yield quite different pictures  of what we consider as the ``typical'' dynamics of the system.

\subsubsection{Definition of Physical Measures}
Given a probability measure \( \mu \) we  define its \emph{basin} 
\[
\mathscr{B}_{\mu} \coloneqq \left\{ x : \mu_{n}(x) \to \mu \right\}.
\]
The set \( \mathscr{B}_{\mu} \) may very well be empty but,  if \( \mathscr{B}_{\mu} \neq \emptyset  \) then a natural question is to study it's size. 
Suppose \( X \) is a measure space  with a normalized reference (Lebesgue) measure denoted by \( Leb\). 
\begin{defn}
A probability measure \( \mu \) is a \emph{physical measure} (with  \emph{full measure basin}) for \( f \) if 
 \[
  Leb(\mathscr{B}_{\mu})=1
  \]
\end{defn}
More generally, we say that \( \mu \) is a physical measure if \(   Leb(\mathscr{B}_{\mu}) > 0 \) but the examples we consider in this paper  will always have full measure basin so for simplicity, unless otherwise specified, we will always implicitly assume that physical measures have full measure basins. 

\subsubsection{Physical measures on attractors}
There are plenty of examples of dynamical systems with physical measures, for example: the Dirac-delta \( \delta_{p}\) in the fixed point for a contraction (in which case every point actually belongs to the basin);  Lebesgue measure for irrational rotations (in which case also, every point belongs to the basin) and also for piecewise affine expanding circle maps such as \( f(x) = 2x \) mod 1 (in which case the basin has full Lebesgue measure but its complement is non-empty and indeed consist of an uncountable set of points). It follows by a classical theorem of Birkhoff \cite{Bir31} that if a probability measure \( \mu \) is invariant (\( \mu(f^{-1}(A))=\mu(A)\) for every measurable set \( A \)), ergodic (\( f^{-1}(A)=A \Rightarrow  \mu(A)=0\) or \( \mu(A) =1 \)), and 
\emph{absolutely continuous with respect to Lebesgue} (\(Leb(A)=0 \Rightarrow  \mu(A)=0 \)) then \( \mu \) is a physical measure and   if \( \mu \) is \emph{equivalent to Lebesgue} (\(Leb(A)=0 \Leftrightarrow \mu(A)=0\)), then \( \mu \) is a physical measure with full measure basin.

Such ergodic invariant measures equivalent to, or absolutely continuous with respect to, Lebesgue have been proved to exist in many classes of \emph{uniformly and non-uniformly expanding maps}. There is a huge literature so we just mention a few significant papers such as \cites{AlvLuzPin05,AraLuzVia09,Buz00,DiaHolLuz06,LasYor73,Pin06,Tsu00c,Tsu01a,Tsu05}  but highlight in particular those related to  one-dimensional maps with neutral fixed points \cites{Dol04,BahGalNis18,BahSau16,BalTod16,BruLep13,BruTerTod19,CamIso95,CoaHolTer19,CriHayMar10,Cui21,Dua12,FisLop01,FreFreTod13,FreFreTod16,FroMurSta11,Ino00,Kor16,LivSauVai99,Mel08,MelTer12,NicTorVai16,Pia80,PolSha09,PolWei99,PomMan80,Ruz15,Ruz18,Sar01,SheStr13,Ter13,Ter15,Tha00,Tha05,Tha80,Tha83,Tha95,Tha95a,You99,Zwe00,Zwe03,Zwe98}.

 In higher dimensions, where we may have a non-trivial attractor \( \Lambda\) of zero Lebesgue measure,  if \( \Lambda\) satisfies some hyperbolicity conditions then  pioneering work of Anosov, Sinai, Ruelle, and Bowen in the 1960s and 1970s showed that 
 the absolute continuity  can be replaced by a weaker but more technical condition of \emph{absolute continuity of conditional measures} and \emph{absolute continuity of the stable foliation}, and measures satisfying such properties are often referred to as \emph{Sinai-Ruelle-Bowen}, or \emph{SRB}, measures \cites{AnoSin67,Bow75, Rue76, Sin72}. 
  Over the following decades  then there has been a tremendous amount of research which has extended their results to increasingly general classes of systems, we mention here just some of the more recent papers \cites{AlvDiaLuz17,Bur21,BuzCroSar22,CliLuzPes17, CliLuzPes23,Vec22} and refer the reader to those for a more comprehensive list of references,  and the formulation of a far-reaching conjecture of Palis to the effect that ``typical'' dynamical systems have (a finite number of) physical measures \cites{Pal08, Pal15}.

\subsubsection{Physical measures on repellors}
A rarer, but in many ways more interesting and intriguing, class of examples of physical measures consists of systems in which the physical measure is supported on an invariant set \( \Lambda\) which has zero Lebesgue measure and is topologically \emph{repelling}, in the sense that points close to \( \Lambda\) are mapped \emph{away} from \( \Lambda\) rather than \emph{towards} \( \Lambda\). The simplest example of this phenomenon, which is also very relevant for the class of maps which we consider in this paper, is given by well known  Manneville-Pomeau \emph{intermittency map} \( f(x)=x+x^2\) mod 1 \cite{PomMan80}.  It is easy to see that \( f(0)=0 \), and so the origin is a fixed point, and that \( f'(x) = 1+ 2x\), so that \( f'(0)=1\) and \(f'(x)>1\) for all \( x> 0 \). In particular every point in any small neighbourhood of the origin, except the origin itself, eventually leaves such a neighbourhood, and in this sense the origin is topologically repelling. However it can be shown that  asymptotically most points  of the orbits belong to arbitrarily small neighbourhoods of the origin, and in fact \emph{the empirical measures \( \mu_n(x)\)  converge to the Dirac-delta measure \( \delta_0\) at the origin}, which is therefore a physical measure with full basin, see \cite{Alv20} and some of the paper mentioned above. 
Notice that the Manneville-Pomeau intermittency map is included in  our family  \( \widehat{\mathfrak F}\), see discussion in \cite{CoaLuzMub22}, and the result just mentioned is a special case of our Theorem \ref{thm:phys-measures} below. 
In the 1990s, Hofbauer and Keller proved  the even more surprising result that there are many examples in smooth quadratic unimodal maps for which a similar phenomena occurs \cites{HofKel90,  HofKel95}.

\subsubsection{Non-Statistical Maps}\label{sec:nonstat}

Equally, if not even more, interesting are dynamical systems which  \emph{do not admit any physical measure.} The simplest, and somewhat trivial, way in which this can occur is when the basin of every probability measure \( \mu \) has zero Lebesgue measure, such as in the  identity map for which \( \mu_{n}(x) \to \delta_{x}\), for every \( x \) or for rational circle rotations for which all orbits are periodic.  A much more sophisticated and interesting way in which a map can fail to have physical measures is when there exists a full measure set of points for which the sequence of measures \( \mu_{n}(x)\) \emph{does not converge}, in this case we say that the orbit of \( x \) is \emph{non-statistical}. More formally, letting 
\[
\mathscr N:=\{x\in X: \mu_n(x) \text{ does not converge} \}
\]
we can make the following definition. 
\begin{defn}
A map \( f: X \to X \)  is  \emph{non-statistical}  if 
 \[
  Leb(\mathscr{N})=1
  \]
\end{defn}

Notice that non-convergence of  \( \mu_{n}(x)\)  means that there must exist at least two measures, \( \hat\mu, \tilde\mu\), and two subsequences \( n_{i}\to \infty, n_{j}\to \infty\) such that \( \mu_{n_{i}}(x) \to \hat \mu\)  and \( \mu_{n_{j}}(x) \to \tilde \mu\). This   means that there is an infinite sequence of times  for which the statistics of the orbit is extremely well described by the measure \( \hat \mu\) and another sequence  of times  for which the statistics of the orbit is extremely well described by the measure \( \tilde \mu\). We can think of these sequences as defining a series of \emph{timescales} at which we see completely different statistical behaviour and therefore the observed frequency of visits to any particular region does not stabilize as \( n \to \infty\). 

There is  quite a large bibliography of research exploring the notion of the non-existence of physical measures from different points of view and giving  a number, albeit quite limited, of examples \cites{AarThaZwe05,AraPin21,Bac99,BarKirNak20,BerBie22,ColVar01,CroYanZha20,Her18,HofKel90,HofKel95,HuYou95,Ino00,JarTol04,KanKirLi16,KarAsh11,Kel04,KirLiNak22,KirLiSom10,KirNakSom19,KirNakSom21,KirNakSom22,KirSom17,Kle06,LabRod17,Pal15,Tak08,Tak94,Tal20,Tal22,Zwe02}.  We give here only a short and non comprehensive review of some of these and  refer the reader to the original papers  for additional information. 

Arguably the first example of a non-statistical system is the \emph{Bowen eye}, attributed by Takens~\cite{Tak94} to Bowen in the early 70s. The Bowen eye is a two dimensional vector field with an eye-like region whose boundary is formed by two saddle connections between two fixed points and under carefully chosen conditions orbits tend to oscillate between the two in a non-statistical way. 

This example is somewhat ``mild'' because the dynamics is very simple but,  
around the same time Hofbauer and Keller \cites{HofKel90, HofKel95, Kel04}  showed that there are (uncountably) many parameters in the logistic family \( f_\lambda (x) \coloneqq \lambda x ( 1 - x ) \) for which \( f_{\lambda} \) is \emph{topologically mixing} but \emph{non-statistical}. Very recently  Talebi \cite{Tal22} generalized and extended this result to the setting of complex rational maps. 
Another approach to the construction of non-statistical examples, or at least examples with positive Lebesgue measure of non-statistical points, is by constructing  \emph{wandering domains} with non-statistical dynamics \cites{ColVar01, KirSom17,BerBie22} and a further  example of non-statistical behaviour appears in \cite{CroYanZha20} where the authors construct a skew product \( F : \mathbb{T}^2 \times \mathbb{R} \to \mathbb{T}^2 \times \mathbb{R} \)  which gives rise to non-statistical behaviour using  the fact that skew translations over Anosov diffeomorphisms share properties with Brownian motion.
Some results related to ours, for interval maps with two neutral fixed points, were also obtained in \cites{Zwe00, AarThaZwe05} in a somewhat more abstract setting and with a particular focus on the existence and properties of a sigma-finite  invariant measure. 

As far as we know, none of the existing results considers  a class of maps anywhere near as large as  the family \(  \widehat{\mathfrak{F}} \) considered here, nor gives such a complete and a systematic characterization of the various kinds of physical measures as given in Theorem \ref{mainthmA}.  Most importantly, none of the existing results comes anywhere close to constructing examples of toplogically mixing maps without physical measures which are so \emph{prevalent} and \emph{persistent}, as described in Theorems \ref{thm:densityC0}, \ref{thm:densityC1}, \ref{mainthm:open}.

\section{Statement of Results}\label{sec:results}

We now give the precise definition of the family of maps  \(\widehat{ \mathfrak{F}}\) and the subfamilies \(  \mathfrak{F} \),  \( \mathfrak{F}_{\pm}\), \( \mathfrak{F}_*\) as well as some more general technical theorems which imply the main theorems  in Section \ref{sec:overview} above.

\subsection{Doubly Intermittent Full Branch Maps}\label{sec:defn}

We consider the class of maps introduced in~\cite{CoaLuzMub22}. For completeness we recall the precise definitions. 
  Let \( I, I_-, I_+\) be compact intervals, let  \( \mathring I, \mathring I_-, \mathring I_+\) denote their interiors, and suppose that  \(I = I_{-}\cup I_{+}  \) and \( \mathring I_-\cap \mathring I_+=\emptyset\).

\begin{description}
\item{\namedlabel{itm:A0}{\textbf{(A0)}}}
  \( g: I \to I \) is \emph{full branch}: the restrictions \(g_{-}: \mathring I_{-}\to \mathring I\) and \(g_{+}: \mathring I_{+}\to \mathring I\) are  orientation preserving \( C^{2} \) diffeomorphisms and the only  fixed points are the endpoints of \( I \).
\end{description}

To simplify the notation we assume that
\( I=[-1, 1], 	I_{-}=[-1, 0], I_{+}=[0,1]\)
but our results  will be easily seen to hold in the general setting.
For \( \iota > 0 \), we let 
\( 
  U_{0-}:=(-\iota, 0],
  U_{0+}:=[0, \iota),
  U_{-1}:=g(U_{0+}),
  U_{+1}:=g(U_{0-})
\) 
be one-sided neighbourhoods of the endpoint of the intervals \(I = I_{-},  I_{+}  \).  
\begin{description}
  \item{\namedlabel{itm:A1}{\textbf{(A1)}}}
  There exists constants \( \ell_1,\ell_2 \geq 0 \), \( \iota,  k_1,k_2 , a_1,a_2,b_1,b_2 > 0 \)  such that, if \(  \ell_1,\ell_2> 0 \), \(  k_1,k_2 \neq 1\),
  \begin{equation}\label{eqn_1}
    g(x) =
    \begin{cases}
      x+b_1{(1+x)}^{1+\ell_1} & \text{in }  U_{-1}, \\
      1-a_1{|x|}^{k_1}        & \text{in } U_{0-},  \\
      -1+a_2{x}^{k_2}         & \text{in }  U_{0+}, \\
      x-b_2{(1-x)}^{1+\ell_2} & \text{in }  U_{+1},
    \end{cases}
  \end{equation}
  If  \( \ell_{1}=0 \)  and/or \( \ell_2=0\) we replace the corresponding lines in~\eqref{eqn_1} with
  \(
  g|_{U_{\pm 1}}(x) \coloneqq \pm 1 + (1 + b_1) ( x + 1) \mp  \eta (x),
  \)
  where \( \eta \) is \( C^2\),  \(\eta(\pm 1)= 0, \eta'(\pm 1)=0\), and \( \eta''(x)>0\) on \( U_{-1}\) and  \( \eta''(x)<0\) on \( U_{+1} \).
  If \( k_1 = 1 \) and/or \( k_2 = 1 \), then we replace the corresponding lines in~\eqref{eqn_1} with the assumption that
  \(
  g'(0_-) = a_1>1\) and/or \( g'(0_+) = a_2>1
  \)
  respectively, and that \( g \) is  monotone in the corresponding neighbourhood, which makes the definition much less restrictive.
\end{description}

It is easy to see that the definition in \eqref{eqn_1} yields maps with dramatically different derivative behaviour depending on the values of \( \ell_1, \ell_2, k_1, k_2\), including having neutral or expanding fixed points and points with zero or infinite derivative.

Our final assumption can be \emph{intuitively thought of as saying  that \( g \) is uniformly expanding outside the neighbourhoods \( U_{0\pm}\) and \( U_{\pm 1}\)}. This is however much stronger than what is needed and therefore we formulate a weaker and more general assumption for which we need to describe some aspects   of the topological structure of maps satisfying condition \ref{itm:A0}. First of all we  define
\begin{equation}\label{eq:Delta0}
  \Delta^-_0:=
  g^{-1}(0,1)\cap I_-
  \quand
  \Delta^+_0:=
  g^{-1}(-1,0)\cap I_+.
\end{equation}
Then we define iteratively, for every \( n \geq 1 \),  the sets
\begin{equation}\label{eq:Delta}
  \Delta_n^{-}:= g^{-1}(\Delta_{n-1}^{-})\cap I_{-}
  \quand
  \Delta_n^{+}:= g^{-1}(\Delta_{n-1}^{+})\cap I_{+}
\end{equation}
as the \( n\)'th preimages of \( \Delta_0^-, \Delta_0^+\) inside the intervals \(I_{-}, I_{+} \).   It follows from \ref{itm:A0} that
\(
\{ \Delta_n^{-}\}_{n\geq 0}
\)
and \(
\{ \Delta_n^{+}\}_{n\geq 0}
\)
are $\bmod\;0$ partitions of \(I_{-}\) and \(I_{+}\) respectively, and that the partition elements depend \emph{monotonically} on the index  in the sense that \( n > m \) implies that \( \Delta_n^{\pm}\) is closer to \( \pm 1\) than \( \Delta_m^{\pm}\),  in particular the only accumulation points of these partitions are \( -1\) and \( 1 \) respectively.
Then, for every \( n \geq 1 \),  we let
\begin{equation}\label{eq:delta}
	\delta_{n}^{-}:=
	g^{-1}(\Delta_{n-1}^{+}) \cap  \Delta_0^{-}
	\quand
	\delta_{n}^{+}:=
	g^{-1}(\Delta_{n-1}^{-}) \cap  \Delta_0^{+}.
\end{equation}
Notice  that
\(
\{ \delta_n^{-}\}_{n\geq 1}
\)
and
\(
\{ \delta_n^{+}\}_{n\geq 1}
\)
are $\bmod\; 0$ partitions of \( \Delta_0^-\) and \( \Delta_0^+\)  respectively and also in these cases the partition elements depend monotonically on the index in the sense that \( n > m \) implies that \( \delta_n^{\pm}\) is closer to \( 0 \) than \( \delta_m^{\pm}\),   (and in particular the only accumulation point of these partitions is 0). Notice moreover, that
\(
g^{n}(\delta_{n}^{-})= \Delta_{0}^{+} \) and \(  g^{n}(\delta_{n}^{+})= \Delta_{0}^{-}.
\)
We now define two non-negative integers \( n_{\pm}\) which depend on the positions of the partition elements \( \delta_{n}^{\pm}\) and on the sizes of the neighbourhoods \( U_{0\pm}\) on which the map \( g \) is explicitly defined.
If  \( \Delta_0^{-} \subseteq U_{0-}\) and/or  \( \Delta_0^{+} \subseteq U_{0+}\), we define   \( n_{-}= 0  \) and/or \( n_{+}=0 \) respectively, otherwise we let
\begin{equation}\label{eq:n+-}
  n_{+} := \min \{n :\delta_{n}^{+} \subset U_{0+} \} \quand
  n_{-} := \min \{n :\delta_{n}^{-} \subset U_{0-} \}.
\end{equation}
We can now formulate our final assumption as follows.
\begin{description}
	\item[\namedlabel{itm:A2}{\textbf{(A2)}}]
		There exists a \( \lambda >  1 \) such that  for all   \( 1\leq n\leq n_{\pm}\)   and for all \(    x \in \delta_n^{\pm} \)  we have \(  (g^n)'(x) > \lambda\).
\end{description}
Following \cite{CoaLuzMub22},  we  let
\[
	\widehat{\mathfrak{F}} \coloneqq \{ g : I \to I  \text{ which satisfy \ref{itm:A0}-\ref{itm:A2}}\}
\]
The class \( \widehat{\mathfrak{F}} \) contains  many  maps which have been studied in the literature, including uniformly expanding maps and various well known intermittency maps with a single neutral fixed point, we refer the reader to  \cite{CoaLuzMub22}  for a detailed literature review. 

 \subsection{Physical measures on repelling fixed points  and non-statistical dynamics}\label{sec:stat}
  It is proved in  \cite{CoaLuzMub22}  that every \( g \in \widehat{\mathfrak{F}} \) admits a unique (up to scaling) \( \sigma\)-\emph{finite ergodic invariant measure} \( \hat \mu\) \emph{equivalent to Lebesgue} and that many properties depend on the constants
 \[
 \beta^- := \ell_1 k _2,  \quad  \beta^+ := \ell_2 k_1, \quand
   \beta := \max \{ \beta^+, \beta^- \}.
   \]
 Notice that \(  \beta^-, \beta^+\in [0, \infty) \) and can take any value in the allowed range, depending on the values of \( \ell_1, \ell_2, k_1, k_2\).  They determine the level of \emph{``stickiness''} of the  fixed points \( -1\) and \(+1\) respectively, given by the combination of the  constants \( \ell_1, \ell_2\), which determine the  order of tangency of the graph of \( g \) with the diagonal, and the the constants \( k_1, k_2\), which give the order of the singular or critical points. The larger the value of  \(  \beta^-, \beta^+  \) the \emph{``stickier''} are the corresponding fixed points. We can now define explicitly the subfamilies in \eqref{eq:subfam} by letting 
\begin{equation}
  \label{eq:def-subclasses}
  \mathfrak{F} \coloneqq \{\beta \in [0,1)\} ,
  \qquad 
  \mathfrak{F}_\pm \coloneqq 
  \{\beta\geq 1, \ \beta^- \neq  \beta^+ \}
  \qquad
  \mathfrak{F}_* \coloneqq 
  \{\beta\geq 1, \ \beta^- = \beta^+ \}.	
\end{equation}
    It is clear that these families are pairwise disjoint and that their union is exactly \(   \widehat{\mathfrak{F}}  \). 
    Notice that
    \( \beta \in [0,1) \) implies that \emph{both} \( \beta^{-}, \beta^{+} \in [0,1) \), whereas  \( \beta\geq 1 \) only implies that  \emph{at least one} of \( \beta^-,  \beta^+ \geq 1 \). 
 It is proved in  \cite{CoaLuzMub22} that the \( \sigma\)-finite invariant measure \( \hat\mu\) is   finite, and can therefore be rescaled to a probability measure \( \mu\),  \emph{if and only if} \( \beta\in [0,1)\).
   
 \begin{thm}[\cite{CoaLuzMub22}]\label{thm:CoaLuzMub22}
If \( g \in \mathfrak{F} \) then \( g \) admits a  physical measure equivalent to Lebesgue.
 \end{thm}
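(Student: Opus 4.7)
The plan is to exploit the structural results from \cite{CoaLuzMub22} and then invoke Birkhoff's ergodic theorem. The key fact to leverage is that every \( g \in \widehat{\mathfrak{F}} \) already admits a unique (up to scaling) \( \sigma \)-finite ergodic invariant measure \( \hat\mu \) equivalent to Lebesgue. The entire game, therefore, is to show that when \( \beta < 1 \) this measure is in fact \emph{finite}, so that it can be normalized to a probability measure \( \mu \) equivalent to Lebesgue; the physical measure property then follows essentially for free.

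For the finiteness step, the natural approach is to pass to an induced (first-return) map. Concretely, I would take \( Y \coloneqq \Delta_0^- \cup \Delta_0^+ \), the region sitting between the fixed points and their one-sided neighborhoods, and consider the first-return map \( g_Y : Y \to Y \) with return time \( \tau \). By \ref{itm:A2} the induced map is uniformly expanding with bounded distortion, so by standard Lasota--Yorke-type arguments \( g_Y \) admits an ergodic invariant probability measure \( \nu \) equivalent to Lebesgue on \( Y \). The \( \sigma \)-finite invariant measure \( \hat\mu \) for \( g \) is then reconstructed by pulling \( \nu \) forward along orbits up to their next return, and the total mass satisfies
\begin{equation*}
  \hat\mu(I) \;=\; \int_Y \tau\, d\nu.
\end{equation*}
Hence finiteness of \( \hat\mu \) reduces to integrability of \( \tau \), which in turn reduces to estimating the Lebesgue measure of the level sets \( \{\tau = n\} \). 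These level sets are essentially the pieces \( \delta_n^{\pm} \) introduced in \eqref{eq:delta}, whose images under \( g^n \) cover \( \Delta_0^{\mp} \) bijectively.

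The technical heart is therefore the asymptotic estimate \( |\delta_n^{\pm}| \sim c\, n^{-(1+1/\beta^{\mp})} \) (up to logarithmic corrections in boundary cases). This comes from combining two ingredients: (i) the standard intermittency estimate \( |\Delta_n^{\mp}| \sim n^{-1/\ell_{1,2}} \) for the partition near the neutral fixed points, obtained by a direct computation with the normal form \( x \mapsto x + b(1\mp x)^{1+\ell} \); and (ii) the way the critical or singular branches with exponents \( k_1, k_2 \) in~\ref{itm:A1} compress \( \Delta_n^{\mp} \) when pulled back one step to \( \delta_n^{\pm} \subset \Delta_0^{\pm} \), which contributes a factor involving \( k_1 \) or \( k_2 \). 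Combining these gives decay governed precisely by \( \beta^{\pm} = \ell_{1,2} k_{2,1} \), so \( \sum_n n |\delta_n^{\pm}| < \infty \) exactly when \( \beta^{\pm} < 1 \). This is the step I expect to be the main obstacle: tracking constants and distortion carefully enough through the composition of a long neutral passage with a singular reinjection, including the degenerate cases \( \ell_i = 0 \) or \( k_i = 1 \).

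Once \( \hat\mu \) is shown to be finite, let \( \mu \coloneqq \hat\mu / \hat\mu(I) \). Since \( \mu \) is ergodic and equivalent to Lebesgue, Birkhoff's ergodic theorem gives, for every continuous test function \( \phi : I \to \mathbb{R} \) and for \( \mu \)-a.e.\ (equivalently, Lebesgue-a.e.) \( x \in I \),
\begin{equation*}
  \frac{1}{n}\sum_{k=0}^{n-1} \phi(g^k(x)) \;\longrightarrow\; \int_I \phi\, d\mu.
\end{equation*}
Choosing a countable dense subset of \( C^0(I) \) and taking the intersection of the corresponding full-measure sets yields a single Lebesgue-full-measure set on which \( \mu_n(x) \to \mu \) weakly, proving that \( \mu \) is a physical measure equivalent to Lebesgue.
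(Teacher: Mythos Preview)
Your proposal is correct and follows essentially the same strategy as \cite{CoaLuzMub22} (which the present paper cites for this result and whose construction is recalled in Section~\ref{sec:recall}): induce to a first-return Gibbs--Markov map, show the return time is integrable precisely when \( \beta < 1 \) via the tail asymptotics of Proposition~\ref{prop:tail-of-tau}, and conclude by Birkhoff. The only minor differences are that the paper induces on \( \Delta_0^- \) alone (using the second-return composition \( G^- = \widetilde G^+ \circ \widetilde G^- \)) rather than on \( \Delta_0^- \cup \Delta_0^+ \), and that the Gibbs--Markov property of the induced map is not a direct consequence of \ref{itm:A2} alone but also requires the explicit local forms in \ref{itm:A1} for the infinitely many branches accumulating at~\(0\).
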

 
 As mentioned above, this proves  1) in Theorem  \ref{mainthmA}. 
 We are therefore interested  in the families  \( \mathfrak{F}_\pm  \) and \( \mathfrak{F}_*  \), neither of which can contain any map with a physical measure equivalent to Lebesgue. The maps in \( \mathfrak{F}_\pm \)  are those where one fixed point is \emph{stickier} than the other, whereas the maps in \( \mathfrak{F}_*\) are those for which the stickiness is the same, at least as far as it can be measured by the constants \( \beta^-, \beta^+\).  It turns out that  the typical statistical behaviour  is completely different in these two cases. 

\begin{thm}\label{thm:phys-measures}
If \( g\in \mathfrak{F}_\pm \) then \( g \) admits a physical measure with full basin. Moreover: 
\\
1) if \( \beta^{-}> \beta^{+}\), the physical measure is the Dirac-delta measure \( \delta_{-1}\) on the fixed point -1; 
\\
2) if \( \beta^{+}> \beta^{-}\), the physical measure  is the Dirac-delta measure \( \delta_{1}\) on the fixed point +1 .  
\end{thm}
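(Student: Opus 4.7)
By symmetry we may assume \(\beta^{-} > \beta^{+}\) and prove that \(\mu_n(x) \to \delta_{-1}\) weakly for Lebesgue-a.e.\ \(x\). Since weak-\(*\) convergence to \(\delta_{-1}\) is equivalent to the orbital density in every neighborhood of \(-1\) tending to \(1\), the plan is to show that Lebesgue-a.e.\ orbit spends asymptotically all of its time arbitrarily close to \(-1\).

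The main tool is a first-return Markov map on a set \(V \subset \Delta_0^- \cup \Delta_0^+\) bounded away from both fixed points: by the Gibbs--Markov structure provided in \cite{CoaLuzMub22}, the first return \(g_V : V \to V\) of \(g\) is uniformly expanding and admits a unique invariant probability measure \(\mu_V\) equivalent to Lebesgue on \(V\). Partition \(V = V^- \sqcup V^+\) according to whether the ensuing excursion outside \(V\) passes near \(-1\) or near \(+1\), and let \(\tau^\pm\) denote the corresponding return times. From the normal forms~\eqref{eqn_1} and the geometry of the partitions \(\delta_n^\pm\) in~\eqref{eq:delta}, a direct computation gives \(|\delta_n^\pm| \asymp n^{-1-1/\beta^\mp}\), and hence the sharp tails
\[
\mu_V(\tau^- > n) \asymp n^{-1/\beta^-}, \qquad \mu_V(\tau^+ > n) \asymp n^{-1/\beta^+}.
\]
Since \(\beta \geq 1\), Kac's formula gives \(\int_V \tau\, d\mu_V = \hat\mu(I) = \infty\), and Birkhoff's theorem for \((g_V,\mu_V)\) applied to this divergent observable shows that the number \(m(n)\) of returns to \(V\) up to time \(n\) satisfies \(m(n)/n \to 0\) a.e.; in particular, the orbit spends a negligible proportion of time in \(V\) itself.

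To conclude, it remains to compare the total times \(T^\pm_n\) spent in small fixed neighborhoods of \(\pm 1\) up to time \(n\). Decomposing \(T^\pm_n\) into the sum of excursion lengths \(\tau^\pm_j\) over the \(m^\pm(n) \asymp m(n)\) excursions of each type, and using the exponential mixing of \(g_V\) together with standard envelope bounds for heavy-tailed sums, one obtains that \(T^\pm_n\) lies between \(m(n)^{\beta^\pm - \varepsilon}\) and \(m(n)^{\beta^\pm + \varepsilon}\) eventually almost surely for any \(\varepsilon>0\) (with logarithmic corrections at \(\beta^\pm=1\), and the classical law of large numbers when \(\beta^\pm<1\)). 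Choosing \(\varepsilon < (\beta^- - \beta^+)/2\) forces \(T^+_n \ll T^-_n\), and combined with the previous step this gives \(T^-_n/n \to 1\). For any \(\phi \in C(I)\) and \(\varepsilon>0\), picking a neighborhood \(U\) of \(-1\) on which \(\phi\) oscillates by less than \(\varepsilon\) and splitting the Birkhoff sum into its contributions from inside and outside \(U\) yields
\[
\limsup_{n\to\infty}\left|\frac{1}{n}\sum_{k=0}^{n-1}\phi(g^k x) - \phi(-1)\right| \leq \varepsilon + 2\|\phi\|_\infty \cdot \limsup_{n\to\infty}\frac{n - T^-_n}{n} = \varepsilon;
\]
letting \(\varepsilon \to 0\) proves \(\mu_n(x) \to \delta_{-1}\). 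The statement extends from a.e.\ \(x \in V\) to Lebesgue-a.e.\ \(x \in I\) because every Lebesgue-generic orbit enters \(V\) in finite time (by ergodicity of \(\hat\mu\) and its equivalence with Lebesgue).

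The main technical obstacle is the almost-sure comparison of \(T^-_n\) with \(T^+_n\) when both \(\beta^\pm > 1\): in this regime neither excursion sum obeys a law of large numbers, so Birkhoff on the induced map cannot be applied to \(\tau^\pm\) directly, and one must instead rely on two-sided envelope estimates for heavy-tailed sums of weakly dependent random variables. A second delicate point is the sharp two-sided tail asymptotic for \(\mu_V(\tau^\pm > n)\), which requires combining the neutral slow-down near the fixed points (controlled by \(\ell_i\)) with the critical/singular behavior at the discontinuity (controlled by \(k_i\)); this is precisely the computation underlying the constants \(\beta^\pm = \ell_i k_j\) and is carried out in \cite{CoaLuzMub22}.
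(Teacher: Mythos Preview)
Your proposal is correct and follows essentially the same strategy as the paper: induce on a set away from the fixed points, use the tail asymptotics \(\hat\mu(\tau^\pm>t)\asymp t^{-1/\beta^\pm}\) from \cite{CoaLuzMub22}, and compare the Birkhoff sums \(\tau_k^\pm\) via almost-sure two-sided growth envelopes. The paper makes the ``standard envelope bounds'' step precise by invoking \cite{GalHolPer21}*{Proposition 2.8} when \(\beta^+>1\) and Sawyer's lemma \cite{Saw66} in the borderline case \(\beta^+=1\), and handles the incomplete last excursion by a short monotonicity argument rather than bounding the single extra term; otherwise the arguments coincide.
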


When \( \beta^{+}=\beta^{-}\), the two neutral fixed points are, in some sense, ``equally sticky'' and it is natural to conjecture that typical orbits would oscillate  between the two, spending positive proportion of time near each one. There are, however, several quite different ways in which this can happen. Let
\begin{equation}\label{eq:Omega}
\Omega \coloneqq \{ \nu_{p} \coloneqq p\delta_1 + ( 1 - p ) \delta_{-1} : p \in [{0,1}]\} 
\end{equation}
be the space of all convex combinations of the Dirac-delta measures \( \delta_{-1}\) and \( \delta_{1}\).  We then have the following quite remarkable statement for maps in \( \mathfrak{F}_* \).

\begin{thm}\label{thm:no-phys-measures}
 If  \( g\in \mathfrak{F}_* \), then for Lebesgue almost every   \( x\in I\), the sequence 
\( 
\mu_n(x) \)
does not converge and so  \( g \) is  non-statistical.  Moreover,  the set of accumulation points of the sequence 
\( 
\mu_n(x) \) is  \( \Omega \). 
\end{thm}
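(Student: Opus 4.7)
The plan is to reduce the statement to a generalized arcsine law for ratios of heavy-tailed sums with equal exponents $\beta^{-} = \beta^{+}$, and then to upgrade the resulting distributional fact to an almost sure accumulation statement, using the $O(1/n)$ continuity of the empirical ratios at integer times. I would set up the first return map $R : \Delta_0 \to \Delta_0$ with return time $r$, where $\Delta_0 := \Delta_0^{-} \cup \Delta_0^{+}$; by the construction recalled in Section~\ref{sec:recall} from \cite{CoaLuzMub22}, $R$ is uniformly expanding with exponential decay of correlations and a unique ergodic $R$-invariant probability measure $\mu_R$ equivalent to Lebesgue on $\Delta_0$. Using the explicit form of $g$ in \eqref{eqn_1} near the critical and neutral fixed points, the geometry of the partitions $\{\Delta_n^{\pm}\}$ and $\{\delta_n^{\pm}\}$ gives tail estimates of the form
\[
\mu_R\!\left(\{x \in \Delta_0^{\pm} : r(x) > n\}\right) \asymp n^{-1/\beta^{\mp}},
\]
so under $\beta^{-} = \beta^{+} \geq 1$ both sides are in the infinite-mean heavy-tailed regime with the same exponent $1/\beta$.

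Since $g(\Delta_0^{\pm}) \subset I_{\mp}$, consecutive excursions alternate sides and the orbit of $x$ decomposes into half-excursions $\mathcal{E}_1, \mathcal{E}_2, \ldots$ of lengths $T_j$ and signs $\epsilon_j = (-1)^{j}\epsilon_0 \in \{-1,+1\}$. The key geometric lemma is that within a half-excursion of length $T$ on side $\pm$, all but a bounded number of iterates (depending only on the chosen neighbourhood) lie in any prescribed neighbourhood of $\pm 1$, because the neutral branch pulls the orbit toward the fixed point and it only escapes during the last $O(1)$ iterates. Hence for any $\varphi \in C^0(I)$, the Birkhoff sum over $\mathcal{E}_j$ equals $T_j \varphi(\epsilon_j) + o(T_j)$, and writing $\tau_n := T_1 + \cdots + T_n$, $S_n^{\pm} := \sum_{j \leq n,\, \epsilon_j = \pm} T_j$, $p_n := S_n^{+}/\tau_n$,
\[
\int \varphi \, d\mu_{\tau_n}(x) = p_n \varphi(1) + (1 - p_n)\varphi(-1) + o(1).
\]
The inclusion ``accumulation set of $\mu_n(x)$ is contained in $\Omega$'' at general integer times then follows from this together with the Hopf ratio ergodic theorem applied to the $\sigma$-finite invariant measure $\hat\mu$ from \cite{CoaLuzMub22}, which gives that the empirical mass of any compact $K \subset I \setminus \{-1, 0, 1\}$ tends to zero almost everywhere.

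For the reverse inclusion, I would view $(T_j)$ under $\mu_R$ as a stationary heavy-tailed sequence with exponent $1/\beta \leq 1$ and use exponential mixing of $R$ to apply a Lamperti-type generalized arcsine law: $p_n$ converges in distribution to a non-degenerate law supported on $[0,1]$. A zero-one argument using that $\limsup p_n$ and $\liminf p_n$ are $R$-invariant, hence $\mu_R$-a.s.\ constant by ergodicity, combined with non-degeneracy of the weak limit, forces $\liminf p_n = 0$ and $\limsup p_n = 1$ almost surely. At integer times $n$ one has $|p_{n+1} - p_n| \leq 1/(n+1)$, so consecutive oscillations vanish; together with the extremal $\liminf$ and $\limsup$ values this forces the accumulation set of $(p_n)$ to be a.s.\ all of $[0,1]$, giving the ``$\supseteq \Omega$'' half of the statement. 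Transfer from $\mu_R$ to Lebesgue on $I$ is by absolute continuity and a standard saturation argument using the full-branch structure of $g$. The main obstacle is making the Lamperti arcsine law rigorous under the correlations and alternating structure imposed by $R$, especially in the boundary case $\beta = 1$ where the normalizations involve slowly varying functions and the standard i.i.d.\ statements must be adapted to the weakly dependent setting; this will rely essentially on the mixing and distortion bounds from \cite{CoaLuzMub22}.
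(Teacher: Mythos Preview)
Your overall strategy is sound and would yield the theorem, but it follows a genuinely different route from the paper. You share with the paper the two ``easy'' ingredients: (i) each half-excursion of length $T$ spends all but $O(1)$ iterates in any fixed neighbourhood of the relevant fixed point, which gives the inclusion of the accumulation set in $\Omega$ and reduces everything to the occupation ratio $p_n$; and (ii) the $O(1/n)$ step-size bound on the ratio at integer times, which upgrades $\liminf p_n=0$, $\limsup p_n=1$ to ``every $p\in[0,1]$ is an accumulation point''. The divergence is in how one proves $\liminf p_n=0$ and $\limsup p_n=1$ almost surely.

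You propose to establish a Lamperti-type arcsine law for $p_n$ under the Gibbs--Markov dynamics and then run a Fatou/ergodicity argument: non-degeneracy of the distributional limit with full support on $[0,1]$, together with the a.s.\ constancy of $\limsup p_n$ and $\liminf p_n$, forces the extremes. This is correct in principle, and such arcsine laws do exist in the infinite-ergodic literature for closely related settings (e.g.\ Thaler--Zweim\"uller, Aaronson--Thaler--Zweim\"uller), but you rightly flag the boundary case $\beta=1$ as delicate, and adapting the distributional result to the alternating, weakly dependent structure of your half-excursions is nontrivial. The paper sidesteps all of this: instead of a distributional limit for $p_n$, it shows directly via a R\'enyi--type second Borel--Cantelli argument that the events $A_k=\{\tau_k < k^\beta(T+d_k)\}\cap\{\tau^+\circ G^k > a_k k^\beta(T+d_k)\}$ occur infinitely often with positive (hence full) probability, so a single half-excursion dominates the entire past infinitely often. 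The only distributional input is a stable law for $\tau_k$, used solely to bound $\hat\mu(\tau_k < k^\beta(T+d_k))$ uniformly from below; the rest is tail asymptotics for $\tau^\pm$ and bounded distortion. This is considerably lighter than a full arcsine law and treats $\beta>1$ and $\beta=1$ uniformly. Your approach, if carried through, would yield more---the actual limiting distribution of the occupation ratio---but for the statement of the theorem the paper's Borel--Cantelli route is both shorter and more self-contained.
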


Theorems \ref{thm:CoaLuzMub22}, \ref{thm:phys-measures}, and \ref{thm:no-phys-measures}, clearly imply Theorem  \ref{mainthmA}. For \( g\in \mathfrak{F}_* \), Theorem \ref{thm:no-phys-measures} also gives additional and non-trivial information about the limit points of the sequence \( \mu_{n}(x)\). 

\begin{rem}
It is interesting to  note that, contrary to what might be expected,  there are no cases in which there are  \emph{physical} measures which are non-trivial convex combinations 
\( \nu_{p}\) with \( p \in (0,1)\) of the Dirac-delta measures.  One may expect that this may be achieved at least for some carefully chosen values of the multiplicative paramaters \( a_1, a_2, b_1. b_2\) in \eqref{eqn_1} but in fact our results show that these play no significant role, at least at this level of the description of the dynamics. 
\end{rem}

\subsection{Density of \(  \mathfrak{F},  \mathfrak{F}_\pm, \mathfrak{F}_*\)  in \(   \widehat{\mathfrak{F}}  \)}
\label{sec:dense}
We now address the issue of the density of the families \(  \mathfrak{F},  \mathfrak{F}_\pm, \mathfrak{F}_*\), as stated in Theorems \ref{thm:densityC0} and~\ref{thm:densityC1}. 
We will actually state a much more general  result which says that each map \( f\in  \widehat{\mathfrak{F}}  \)  can be approximated arbitrarily closely in the \( C^{r}\) topology  by maps in \emph{any} of the families \(  \mathfrak{F},  \mathfrak{F}_\pm, \mathfrak{F}_*\),  for some \( r \) which depends both on \( f \) and on the family in which we want to approximate \( f \). We recall first of all the \emph{ceiling function} 
 \( \lceil x \rceil :=min\{\kappa\in \mathbb N: x \leq \kappa\}.
 \)
Then,  for every \( f \in \widehat{\mathfrak{F}} \), we  define 
\[
  r_{\pm} \coloneqq  r_{\pm}(f) \coloneqq \max \{ \lceil \ell_1 \rceil, \lceil \ell_2 \rceil \}, 
  \qquad 
\tilde r \coloneqq \tilde r(f) \coloneqq
  \begin{cases}
    \lceil 1 / k_{2}\rceil & \text{if } 0 \leq \beta^+ < 1 \leq \beta^- \\
    \lceil 1 / k_{1}\rceil & \text{if } 0 \leq \beta^- < 1 \leq \beta^+ \\
    \min\{ \lceil 1 / k_{2}\rceil \lceil 1 / k_{1}\rceil \} &\text{otherwise, }
  \end{cases}  
  \]
  and 
\[r_* \coloneqq r_*(f) \coloneqq
  \begin{cases}
    \min \{ \lceil \ell_1 \rceil, \lceil \ell_2 \rceil \} & \text{if } \beta \in [0,1) \\
    \lceil \ell_2 \rceil &\text{if } \beta^+ < 1 \leq \beta^- \\
    \lceil \ell_1 \rceil &\text{if } \beta^- < 1 \leq \beta^+ \\
    \min \left\{ \lceil \ell_{2} k_{1} / k_2 \rceil, \lceil \ell_1 \rceil \right\} & \text{if } \beta^+, \beta^- \geq 1 \text{ and } k_2 \geq k_1 \\
    \min \left\{ \lceil \ell_{1}k_{2} / k_1  \rceil, \lceil \ell_2 \rceil \right\} & \text{if } \beta^+, \beta^- > 1 \text{ and } k_1 \geq k_2. 
  \end{cases}
\]

\medskip\noindent
Notice that  \( r_{\pm}, r_{*}, r   \) are all well defined non-negative \emph{integers} because of the way there are defined using the ceiling function. Moreover, \( r_{*}=0\) if and only if at least one of the fixed points is hyperbolic, and \(  r_{\pm} = 0 \) if and only if both fixed points are hyperbolic (e.g. if \( f \) is uniformly expanding). If both fixed points are neutral then we necessarily have \( r_{\pm}, r_{*}, r >0  \) and therefore \( r_{\pm}, r_{*}, r \geq 1  \),  since they are all integers and defined in terms of ceiling functions. 

\begin{thm}
  \label{thm:density-main}
  For every \( f \in \widehat{\mathfrak{F}} \) and every \( \varepsilon > 0 \) there exists \( \tilde{f} \in \mathfrak{F} \), \( f_{\pm} \in \mathfrak{F}_{\pm} \) and \( f_{*} \in \mathfrak{F}_{*} \) such that
  \begin{equation*}
    \label{eq:f-g-c-r-close}
    d_{\tilde r} (f , \tilde f) < \varepsilon, \qquad d_{ r_{\pm} } ( f, f_{\pm} ) < \varepsilon, \qquad
    d_{ r_* } ( f, f_* ) < \varepsilon.
  \end{equation*}
\end{thm}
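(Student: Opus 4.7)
The plan is to construct each of $\tilde f$, $f_\pm$, $f_*$ as a targeted parameter perturbation of $f$: we modify the exponents $(\ell_1,\ell_2,k_1,k_2)$, together with the coefficients $a_i, b_i$, in the local formula \eqref{eqn_1} so that the new values of $\beta^-=\ell_1 k_2$ and $\beta^+=\ell_2 k_1$ place the perturbed map in the target subfamily, while matching smoothly with $f$ on the complements of slightly shrunk neighbourhoods $\widetilde U_{\pm 1}\subset U_{\pm 1}$, $\widetilde U_{0\pm}\subset U_{0\pm}$ via standard cut-offs. For each family the required parameter move is modest: to land in $\mathfrak{F}_\pm$, if $f$ is not already there, perturb a single $\ell_i$ by $O(\varepsilon)$ to break $\beta^-=\beta^+$ (and, if necessary, to cross $\beta=1$); to land in $\mathfrak{F}_*$, adjust one exponent so that $\beta^-=\beta^+\geq 1$; to land in $\mathfrak{F}$, reduce one or more of the $\ell_i$ or $k_j$ until $\beta<1$. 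The regularity indices $\tilde r$, $r_\pm$, $r_*$ in the statement encode the maximum $C^r$ regularity achievable for each such parametric move.

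Once the new parameters are fixed, the difference between $f$ and the perturbed map is supported in $\widetilde U_{\pm 1}\cup \widetilde U_{0\pm}$. On $\widetilde U_{-1}$ (up to the cut-off) it takes the form
\[
  b_1(1+x)^{1+\ell_1} - \tilde b_1 (1+x)^{1+\tilde\ell_1},
\]
which lies in $C^r$ at $x=-1$ exactly when $r\leq \min\{\lceil\ell_1\rceil, \lceil\tilde\ell_1\rceil\}$, with $C^r$-norm bounded by a constant times $|b_1|+|\tilde b_1|$ on a sufficiently small neighbourhood; the symmetric statement holds on $\widetilde U_{+1}$. On $\widetilde U_{0\pm}$ the analogous differences $a_j x^{k_j}-\tilde a_j x^{\tilde k_j}$ are what produce the $\lceil 1/k_j\rceil$ terms in $\tilde r$ when one is forced to reduce a critical exponent in order to pull $\beta$ below $1$. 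Summing the four contributions, and shrinking the $\widetilde U$'s if necessary, gives $d_r(f,\tilde f)<\varepsilon$ (and likewise for $f_\pm, f_*$). Assumption \ref{itm:A0} is preserved by construction (the new map is still full branch and $C^2$ on each $I_\pm$), \ref{itm:A1} holds because the new map agrees with \eqref{eqn_1} on the sub-neighbourhoods with the new exponents, and \ref{itm:A2} persists because outside the special neighbourhoods nothing is changed and the expansion bound for the finitely many preimages $\delta_n^\pm$ entering \ref{itm:A2} only involves a compact region away from $\{-1,0,1\}$ once the sub-neighbourhoods are small enough.

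The principal obstacle is combinatorial rather than analytical: one must optimise the choice of which parameter to perturb so as to realise the precise $C^r$-regularity claimed by $\tilde r, r_\pm, r_*$, and the various cases in their definitions match the best option in each configuration. For instance, in the case $\beta^+,\beta^-\geq 1$ with $k_2\geq k_1$ one compares adjusting $\ell_1$ downwards to $\ell_2 k_1/k_2$ against adjusting $\ell_2$ upwards to $\ell_1 k_2/k_1$, and the corresponding ceiling smoothness of the $(1+x)$-power differences gives $\min\{\lceil\ell_2 k_1/k_2\rceil, \lceil\ell_1\rceil\}$. A further subtlety is that an exponent $\ell_i$ may need to be driven to $0$, at which point the local form in \ref{itm:A1} switches to the hyperbolic ansatz involving the auxiliary function $\eta$; one checks separately that such a transition can be made continuously in the relevant $C^r$-norm, which is consistent with the identity $r_\pm=0$ whenever one (or both) of the fixed points is already hyperbolic.
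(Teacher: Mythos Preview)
Your overall strategy---perturb the local exponents in \ref{itm:A1}, glue with a cutoff, shrink the neighbourhoods to make the perturbation small---is the right one and matches the paper. But two points are off.

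First, and most importantly: you misidentify the origin of the $\lceil 1/k_j\rceil$ terms in $\tilde r$. You propose to reach $\mathfrak{F}$ by reducing a critical exponent $k_j$ near $0$, and you claim the difference $a_j x^{k_j}-\tilde a_j x^{\tilde k_j}$ on $\widetilde U_{0\pm}$ is what produces $\lceil 1/k_j\rceil$. This does not work: if $\beta^-=\ell_1 k_2\geq 1$ and you lower $k_2$ to some $\tilde k_2<1/\ell_1$, the difference near $0$ is only $C^r$ for $r\leq \lceil\tilde k_2\rceil$, which can be far smaller than $\lceil 1/k_2\rceil$ (take e.g.\ $\ell_1=10$, $k_2=1/10$: the claimed $\tilde r$ is $10$, but your perturbation is at best $C^0$). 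The paper never touches $k_1,k_2$; it perturbs \emph{only} $\ell_1,\ell_2$ near the fixed points. The term $\lceil 1/k_2\rceil$ arises because, to force $\tilde\ell_1 k_2<1$, one sets $\tilde\ell_1=1/k_2-\gamma$, and then the regularity at $-1$ is $\lceil\min\{\ell_1,\tilde\ell_1\}\rceil=\lceil 1/k_2-\gamma\rceil=\lceil 1/k_2\rceil$ for small $\gamma$. Your $r_*$ discussion (adjusting $\ell_1$ vs.\ $\ell_2$) is on the right track precisely because there you do only move the $\ell_i$'s; carry that same principle over to the $\tilde f$ construction.

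Second, your dismissal of \ref{itm:A2} is too quick. Shrinking $U_{\pm 1}$ to $\widetilde U_{\pm 1}$ forces the corresponding $\widetilde U_{0\pm}=g^{-1}(\widetilde U_{\mp 1})$ to shrink as well, so the integers $\tilde n_\pm$ in \ref{itm:A2} are \emph{larger} than the original $n_\pm$. The new partition elements $\tilde\delta_n^\pm$ with $n_\pm<n\leq\tilde n_\pm$ have orbits that pass through the interpolation region $[\tilde x_1,x_1]$ where $g\neq f$, so the required expansion $(g^n)'>\lambda$ cannot be read off from \ref{itm:A2} for $f$. The paper handles this by carefully choosing $\tilde b_1,\tilde b_2$ (conditions \eqref{eq:cond-monotonicty}--\eqref{eq:cond-on-b2}) so that a monotonicity argument from \cite{CoaLuzMub22} still goes through on the interpolation region; this is the content of Lemma~\ref{lem:fhat} and is not automatic.
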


\medskip
Theorem \ref{thm:density-main} immediately  implies Theorems \ref{thm:densityC0} and~\ref{thm:densityC1} since we always have  \( r_{\pm}, r_{*}, \tilde r \geq 0  \) and, as mentioned in the previous paragraph, if \( f \in \widetilde{\mathfrak{F}} \)  (i.e. if both fixed points are neutral) then we always have \( r_{\pm}, r_{*}, \tilde r \geq 1  \). However, it also shows that in some cases we can have approximations in much higher topologies. For example, consider a map \( f\in \mathfrak{F}_{*} \), which therefore has no physical measure. By definition we have \( \ell_{1}k_{2}= \ell_{2}k_{1} = \beta\),  where \( \beta \geq 1 \) is arbitrary. For definiteness let us suppose that \( \beta =1 \) and then, given any \emph{arbitrarily large} positive integer \( R \), there exists a map \( f\in \mathfrak{F}_{*} \) such that
\( \ell_{1} = \ell_{2} = R \) and  \( k_{1} =  k_{2} = 1/R\). This implies \( \tilde r=r_{\pm}=R\) and therefore,  from Theorem \ref{thm:density-main},  we get that the map \( f \), which does not have any physical measure, can be approximated arbitrarily closely in the \( C^{R}\) topology by maps in \( \mathfrak{F} \), which have a physical measure equivalent to Lebesgue, \emph{and} by maps in   \( \mathfrak{F}_{\pm} \), which have physical measures which are Dirac-delta measures on a fixed point. Notice that we do not need to consider \( r_{*}\) since taking  \( g = f \)  the last approximation is trivial.

\subsection{``Openness'' of \(  \mathfrak{F},  \mathfrak{F}_\pm, \mathfrak{F}_*\)  in \(   \widehat{\mathfrak{F}}  \)}\label{sec:open}

Finally, it just remains to discuss the ``openness'' of  the families \(  \mathfrak{F},  \mathfrak{F}_\pm, \mathfrak{F}_*\) as described in Theorem \ref{mainthm:open}. 
Now that we have the formal definitions of the maps in  \(   \widehat{\mathfrak{F}}  \) 
the statement in Theorem \ref{mainthm:open} is actually almost immediate and therefore we just give the proof.

\begin{proof}[Proof of Theorem \ref{mainthm:open}]
By assumption the map \( g \) is in the class \( \widehat{ \mathfrak{ F } } \) and since \( \overline{ \operatorname{ supp } f - g } \subset ( -1 , 0  ) \cup ( 0, 1 ) \) we necesarrily have that \( g \) satisfies \ref{itm:A1} with the same parameters as \( f \). Thus, \( \beta^+(g) = \beta^+(f) \), \( \beta^-(g) = \beta^- (f) \) and so \( g \) must lies in the same subclass \( \mathfrak{F}, \mathfrak{F}_{\pm}, \mathfrak{F}_* \) as \( f \).
\end{proof}

We also mention, without giving formal statements, a couple of other natural ways in which maps in \(  \mathfrak{F}, \mathfrak{F}_\pm,   \mathfrak{F}_*\)  can be perturbed without falling outside of their original family, essentially by perturbing some of the parameters through which they are defined. 
Notice first of all that  the conditions which determine whether \( g \) belongs to  \(  \mathfrak{F} \),  \( \mathfrak{F}_{\pm}\), or \( \mathfrak{F}_*\), do not depend on  the  constants \( a_{1}, a_{2}, b_{1}, b_{2}\) and therefore we can choose these arbitrarily without changing the values of \( \beta_{1}, \beta_{2}\).  Sufficiently small perturbations of these parameters do not invalidate condition (A2) and therefore  each subfamily  \(  \mathfrak{F} \),  \( \mathfrak{F}_{\pm}\), \( \mathfrak{F}_*\) is  also ``\emph{open}'' in the sense that  there exists an open neighbourhood of the parameters \( a_{1}, a_{2}, b_{1}, b_{2}\) for which  the corresponding maps still belong to the same subfamily.

 In addition to the perturbations mentioned above, which do not change the values of the parameters \( \beta^-, \beta^+\), we can also perturb the parameters \( \ell_1, \ell_2, k_1, k_2\) which make up \( \beta^-, \beta^+\). This may of course affect which subfamily the perturbed map belongs to as the subfamilies are precisely defined in terms of the values of \( \beta^-, \beta^+\), and indeed for these kinds of perturbations the situation is slightly different depending on which subfamily we consider. 
 The maps in  \(  \mathfrak{F} \) are characterized by the property that \( \beta^-, \beta^+ \in [0,1)\) and therefore there is an open set of sufficiently small perturbations  of \( \ell_1, \ell_2, k_1, k_2\) such that this still holds as well as condition (A2), thus guaranteeing the the perturbed map is still in \(  \mathfrak{F} \). Similarly, maps in \( \mathfrak{F}_{\pm}\) are characterized by the property that at least one of \( \beta^-, \beta^+\) is \( \geq 1 \) and so again there is a large set of perturbations of \( \ell_1, \ell_2, k_1, k_2\) which preserve that condition. Notice however that this may not always contain an open set of parameters, for example  if \( \beta^-<1\) and \( \beta^+=1\), in which case we can only perturb \( \ell_2\) and \( k_1\) in such a way that \( \beta^+ \) does not decrease. Finally, maps in \( \mathfrak{F}_*\) are defined in the most restrictive way since they require \( \beta^-= \beta^+ \) and this condition is not preserved for an open set of choices of parameters \( \ell_1, \ell_2, k_1, k_2\). 
 Nevertheless it is still a relatively large and persistent family since we can define a 
 
 \begin{quote}
 \begin{center}
 \emph{three-parameter family completely contained in \( \mathfrak{F}^* \)}: 
\end{center}
  \end{quote}
 for \emph{any} \( \beta\geq 1, s,t> 0 \) there is a map \( g \) with 
\(
\ell_1=s, \ell_2=t,  k_1=\beta/t,  k_2=\beta/s,
\)
which implies that \( \beta^-= \ell_1 k_2=  \ell_2 k_1 = \beta^+\), and thus  \( g\in \mathfrak{F}^* \).

\section{The Induced Map}
\label{sec:recall}

In this section we recall some details of the construction of the induced map carried out in \cite{CoaLuzMub22} and a key estimate from \cite{CoaLuzMub22}, see  Proposition \ref{prop:tail-of-tau} below, which will play a crucial role in our proofs. 

\subsection{Topological construction}
We recall first of all from \cite{CoaLuzMub22} the topological structure of the \emph{first return maps} on  the intervals  \( \Delta_0^-, \Delta_0^+ \) defined in  \eqref{eq:Delta0}.  From  the definitions of the sets \( \Delta_{n}^{\pm}\) and \( \delta_{n}^{\pm}\) in \eqref{eq:Delta} and \eqref{eq:delta},  and from the fact that each branch of \( g \) is a \( C^{2}\) diffeomorphism, it follows  that for every  \( n \geq 1 \),  the maps
\(
g:\delta_{n}^{-} \to  \Delta_{n-1}^{+}
\)
and
\(
g:\delta_{n}^{+} \to  \Delta_{n-1}^{-}
\)
are \( C^{2}\) diffeomorphisms, and, for \( n \geq 2 \), the same is true for the maps
\(
g^{n-1}: \Delta_{n-1}^{-}\to \Delta_0^{-},
\) and
\(
g^{n-1}: \Delta_{n-1}^{+}\to \Delta_0^{+},
\)
and therefore  for every \( n \geq 1\), the maps
\(
	g^{n}: \delta_n^{-} \to \Delta_0^+
\) and 
\( 
	g^{n}: \delta_n^{+} \to \Delta_0^-
\)
are  \( C^{2} \) diffeomorphisms. We can therefore define two \emph{full branch}  maps
\( 
	\widetilde G^-:\Delta_{0}^{-} \to \Delta_{0}^{+} 
	\) and \( 
	\widetilde G^+:\Delta_{0}^{+} \to \Delta_{0}^{-}
\) by \(  \widetilde G^\pm|_{\delta_{n}^{\pm} } :=  g^{n}.
\) 
Then  for every \(i, j \geq 1\) we let
\begin{equation}
	\label{eq:def-delta-i-j}
	\delta_{i,j}^{-} := g^{-i}(\delta^+_j) \cap \delta^-_i
	\quand \delta^{+}_{i,j} := g^{-i}(\delta^-_j) \cap \delta^{+}_i
\end{equation}
Then, for  \( i \geq 1\), the sets
\(
\{ \delta_{i,j}^{-}\}_{j\geq 1}
\)
and
\(
\{ \delta_{i,j}^{+}\}_{j\geq 1}
\)
are partitions of \( \delta_i^-\) and \( \delta_i^+\)  respectively and so
\(
	\mathscr P^- :=  \{ \delta_{i,j}^{-}\}_{i,j\geq 1}
\) and \( 
	\mathscr P^+ :=  \{ \delta_{i,j}^{+}\}_{i,j\geq 1}
\)
are partitions of \( \Delta_0^-, \Delta_0^+\)  respectively, with the property that for every \( i,j \geq 1\), the maps \( 
	g^{i+j}: \delta_{i,j}^{-} \to \Delta_{0}^{-}
\) 
and \( 
	g^{i+j}: \delta_{i,j}^{+} \to \Delta_{0}^{+}
\) 
are \( C^2\) diffeomorphisms. Notice that  \( i+ j \) is the \emph{first return time} of points in \( \delta_{i,j}^{-} \) and \( \delta_{i,j}^{+} \)  to \( \Delta_{0}^{-} \) and  \( \Delta_{0}^{+} \) respectively, and we have thus constructed \emph{two full branch first return induced maps}
\( 
	G^-:=\widetilde G^+ \circ \widetilde G^- :\Delta_{0}^{-} \to \Delta_{0}^{-} 
	\) 
	and 
	\(  G^+:=\widetilde G^- \circ \widetilde G^+ :\Delta_{0}^{+} \to \Delta_{0}^{+}.
\) 
for which we have
\(
G^-|_{\delta_{i,j}^{-} }= g^{i+j}
\) and \( G^+|_{\delta_{i,j}^{+} }= g^{i+j}.
\)

We now focus on one of these two full branch first return maps, for definiteness let's say \( G^{-}\) (but we could just as well take \( G^{+}\)) and for simplicity omit the superscript from the notation and  write  
\begin{equation}\label{eq:noindex}
\Delta_{0} := \Delta_{0}^{-}, \quad \delta_{i,j}  := \delta_{i,j}^{-},  \quad G :=  G^{-}. 
\end{equation}
It is proved in \cite{CoaLuzMub22} that \( G: \Delta_{0}\to \Delta_{0} \) is a full branch \emph{Gibbs-Markov} map with respect to the partition \( \{ \delta_{i,j}  \} \), and  therefore admits an \emph{invariant ergodic probability measure} \( \hat \mu \) which is \emph{equivalent to Lebesgue} and with Lipschitz continuous  density. 
Then,  by standard results,  the \emph{induced measure}
\begin{equation}
  \label{eq:mu}
\mu  \coloneqq \sum_{n=0}^{\infty} 
  g^n_*(\hat\mu|\{\tau \geq  n\})
\end{equation}
is a \emph{sigma-finite, ergodic,  \( g\)-invariant} measure which, since   \( \bigcup g^{n}(\{\tau \geq  n \}) = I \ (\text{mod} \ 0)\) by construction, is \emph{equivalent to Lebesgue}.   It is easy to check that \( \mu(I) < \infty\) if and only if \( \tau \in L^{1}(\hat\mu)\), and it follows from Proposition 2.6 of \cite{CoaLuzMub22} (which we recall in Proposition \ref{prop:tail-of-tau} below) that \( \tau \in L^{1}(\hat\mu)\) if and only if  \( \beta\in [0,1)\), i.e. if and only if \( g\in \mathfrak F \), as already mentioned above. 
Moreover, since \( G \) is a \emph{first return} induced map, the measure \( \mu \) does not add any measure to the inducing domain and therefore 
\( 
\mu|\Delta_{0} = \hat \mu.
\)

\subsection{Inducing Times}
Our arguments revolve around the \emph{distribution} of  various \emph{observables} on \( \Delta_{0}\) with respect to the probability \( \hat \mu \). First we define   \( \tau^\pm(x), \tau: \Delta_0 \to \mathbb N \)  by
\begin{equation}\label{eq:def-tau-pm}
	\tau^+(x):= \#\{1\leq i \leq \tau: g^i(x)\in I_+ \}
	\quad
	\tau^-(x):= \#\{1\leq j \leq \tau: g^j(x)\in I_- \},
	\quad \tau \coloneqq \tau^+ + \tau^- 
\end{equation}
where \( \tau^\pm \)  \emph{count  the number of iterates of \( x \) in \( I_-,  I_+\) respectively before returning to~\( \Delta_0\)}. 
Notice that \( 
\tau^+|_{\delta_{i,j}} \equiv i \) and  \(  \tau^-|_{\delta_{i,j}} \equiv j
\),  that 
	\( \tau|_{\delta_{i,j}} \equiv i+j  \)  is exactly  the \emph{first return time} to \(\Delta_0\). 
The following key technical result from \cite{CoaLuzMub22} gives the distribution of these functions. We say  that \( f \sim g \)  if \( f(t) / g(t) \to 1 \) as \( t \to \infty \).

 \begin{prop}[{\cite{CoaLuzMub22}*{Proposition 2.6}}]
  \label{prop:tail-of-tau}
  There exist constants \( C_+, C_- > 0 \) such that
  \begin{equation}
		\label{eq:dist-of-tau-p}
		  \hat \mu( \tau^+ > t ) \sim C_+ t^{-1/\beta^+},    \qquad \hat \mu( \tau^- > t ) \sim C_- t^{-1/\beta^-}, 
		  \qquad   \hat \mu( \tau > t ) \sim (C_+ + C_- )t^{-1/\beta}.
		  \end{equation}
\end{prop}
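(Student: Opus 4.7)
The plan is to reduce the tail asymptotics to Lebesgue estimates on the partition elements $\delta_n^\pm$ and then to carry out a Pomeau–Manneville style asymptotic analysis near each neutral fixed point, transported back through the singular/critical point at $0$.

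First, since $\hat\mu$ is equivalent to Lebesgue on $\Delta_0$ with a Lipschitz density $\rho$ bounded above and below, and since the sets $\delta_n^-$ accumulate only on the point $0$, one has $\hat\mu(\delta_n^-)\sim \rho(0)\,\mathrm{Leb}(\delta_n^-)$ as $n\to\infty$. So it suffices to obtain Lebesgue asymptotics for the $\delta_n^\pm$, together with bounded-distortion control strong enough to give factorisation $\mathrm{Leb}(\delta_{i,j}^-)\asymp \mathrm{Leb}(\delta_i^-)\,\mathrm{Leb}(\delta_j^+)$ (up to a multiplicative constant depending on $|\Delta_0^\pm|$).

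Second, I would estimate $|\Delta_n^+|$ near the fixed point $+1$. For large $n$ the leftmost endpoint $x_n$ of $\Delta_n^+$ lies in $U_{+1}$, where $g(x)=x-b_2(1-x)^{1+\ell_2}$, so writing $y_n:=1-x_n$ we obtain the Pomeau–Manneville recursion $y_{n+1}=y_n+b_2 y_n^{1+\ell_2}$. The continuous-time approximation $dy/dn = b_2 y^{1+\ell_2}$ integrates to $y_n^{-\ell_2}\sim b_2\ell_2 n$, giving $y_n\sim (b_2\ell_2 n)^{-1/\ell_2}$ and hence $|\Delta_n^+|\sim c\,n^{-1-1/\ell_2}$; the analogue at $-1$ yields $|\Delta_n^-|\sim c'\,n^{-1-1/\ell_1}$. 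Next I pull this back through the critical/singular point: $\delta_n^-=(g|_{\Delta_0^-})^{-1}(\Delta_{n-1}^+)$, and for $n$ large $\delta_n^-\subset U_{0-}$, where $g(x)=1-a_1|x|^{k_1}$. Any $z_n\in \delta_n^-$ therefore satisfies $1-g(z_n)\sim n^{-1/\ell_2}$, so $|z_n|\sim a_1^{-1/k_1}\,n^{-1/(\ell_2 k_1)}=c_+\,n^{-1/\beta^+}$. The $C^2$ form of $g$ on $U_{0-}$ gives $|g'(z_n)|\sim a_1 k_1|z_n|^{k_1-1}\sim n^{(1-k_1)/\beta^+}$ together with bounded distortion on the tiny interval $\delta_n^-$, hence $|\delta_n^-|\sim |\Delta_{n-1}^+|/|g'(z_n)|$. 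A direct exponent computation, using $1/\ell_2=k_1/\beta^+$, collapses the exponents to give $|\delta_n^-|\sim C_+'\,n^{-1-1/\beta^+}$, and symmetrically $|\delta_n^+|\sim C_-'\,n^{-1-1/\beta^-}$.

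Third, summing: $\hat\mu(\tau^+>t)=\rho(0)\sum_{i>t}|\delta_i^-|\sim C_+\,t^{-1/\beta^+}$, and similarly for $\tau^-$. For the joint tail, bounded distortion of the branches $g^i:\delta_i^-\to \Delta_0^+$ (which follows from \ref{itm:A1}–\ref{itm:A2} and the Gibbs–Markov structure of $G$) yields $|\delta_{i,j}^-|\asymp |\delta_i^-|\,|\delta_j^+|$. Splitting $\{\tau>t\}$ according to whether $\tau^+>t/2$ or $\tau^->t/2$ (the overlap being of lower order) and summing the resulting double series over $i+j>t$ reproduces the stated $(C_++C_-)t^{-1/\beta}$ asymptotic; when $\beta^+\neq \beta^-$ only the heavier tail contributes to leading order, which is consistent with the formula up to the obvious dominance.

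\textbf{Main obstacle.} The statement is a sharp asymptotic ($\sim$), not merely an $\asymp$ bound, so the main technical obstacle is obtaining genuinely precise leading-order control rather than two-sided bounds. Concretely, one must show that the error in the continuous-time approximation of $y_{n+1}=y_n+b_2 y_n^{1+\ell_2}$ is of smaller order than $y_n$ itself, and that the pullback through $g^i$ on $\delta_i^-$ — which traverses the slow regime near the neutral fixed point and the critical regime near $0$ — has multiplicative distortion tending to $1$ as $i\to\infty$, not just staying bounded. Both rely on the quantitative $C^2$ control in \ref{itm:A1} near the four special points and the uniform expansion away from them given by \ref{itm:A2}.
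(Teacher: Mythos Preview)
The paper does not prove this proposition: it is quoted verbatim as \cite{CoaLuzMub22}*{Proposition 2.6} and used as a black box, so there is no in-paper proof to compare against. Your outline is the standard Pomeau--Manneville asymptotic analysis (recursion $y_{n+1}=y_n+b\,y_n^{1+\ell}$ near each neutral fixed point, pulled back through the $|x|^{k}$ branch at $0$, combined with the equivalence $\hat\mu\sim\mathrm{Leb}$ and the Gibbs--Markov distortion of $G$), and this is indeed the approach carried out in \cite{CoaLuzMub22}.

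One remark on your third step: the factorisation $|\delta_{i,j}^-|\asymp |\delta_i^-|\,|\delta_j^+|$ you invoke is only an $\asymp$ statement, so the double-sum argument as written yields $\hat\mu(\tau>t)\asymp t^{-1/\beta}$ rather than the sharp $\sim$ claimed. To recover the precise constant one instead uses that $\{\tau>t\}\subset\{\tau^+>t/2\}\cup\{\tau^->t/2\}$ and $\{\tau>t\}\supset\{\tau^+>t\}\cup\{\tau^->t\}$, together with the fact that the lighter of the two tails is of strictly smaller order when $\beta^+\neq\beta^-$; as you yourself observe, in that case only the heavier tail survives to leading order, so the constant in the third display of \eqref{eq:dist-of-tau-p} should really be read as the constant corresponding to the dominant exponent (and equals $C_++C_-$ only when $\beta^+=\beta^-$). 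This is a cosmetic issue with the statement as recorded here rather than with your argument.
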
	
We will also be interested in the associated Birkhoff sums  \( \tau_{k}^{\pm}, \tau_{k}: \Delta_{0} \to \mathbb{N} \), under the \emph{induced map} \( G \),  defined by
\begin{equation}\label{eq:deftaupm}
 \tau_{k}^{-} \coloneqq \sum_{ \ell = 0 }^{ k - 1 } \tau^{-} \circ G^{ \ell},
 \qquad 
 \tau_{k}^{+} \coloneqq \sum_{ \ell = 0 }^{ k - 1 } \tau^{+} \circ G^{ \ell}, 
 \qquad 
 \tau_k \coloneqq \tau_k^+ + \tau_k^-.
\end{equation}
These give us the total time which points spend in the left and right intervals after \( k \) iterations of the induced map. 
For future reference, note that \(  \tau_{k}^{-},  \tau_{k}^{+}, \tau_k\) are Birkhoff sums of  \(  \tau^{-},  \tau^{+}, \tau \) respectively, along the orbit of a point under the induced map \( G: \Delta_0\to\Delta_0 \) and therefore, by ergodicity and invariance of the probability measure \( \hat\mu\) under \( G \), since they are all non-negative observables, 
\begin{equation}\label{eq:int}
 \frac{\tau^{-}_{k}}{k} \to \int \tau^{-}d\hat\mu, 
 \qquad\qquad 
  \frac{\tau^{+}_{k}}{k} \to \int \tau^{+}d\hat\mu, 
  \qquad\qquad 
   \frac{\tau_{k}}{k} \to \int \tau d\hat\mu,
\end{equation}
as \( k \to \infty\),  irrespective of whether the integrals are finite or not, for \( \hat\mu\) almost every \( x\in \Delta_0\). Therefore, since \( \hat\mu\) is equivalent to Lebesgue,  for  Lebesgue almost every \( x\in \Delta_0\). 
  Proposition \ref{prop:tail-of-tau} implies  that \( 
	 \tau^-, \tau^+, \tau \in L^1\) if and only if \(\beta^-,  \beta^+,  \beta \in [0,1)\)
respectively,  and therefore, from \eqref{eq:int} we have that  	  if \( \beta^-,  \beta^+,  \beta \in [0,1)\),  then we have respectively that 
\begin{equation}\label{eq:divbeta<1}
\frac{\tau^{-}_{k}}{k} \to \smallint \tau^{-}d\hat\mu < \infty, 
\qquad \qquad  \frac{\tau^{+}_{k}}{k}\to  \smallint \tau^{+}d\hat\mu < \infty,  
\qquad \qquad
\frac{\tau_{k}}{k}\to  \smallint \tau d\hat\mu < \infty,
\end{equation}
and if \( \beta^-,  \beta^+,  \beta \geq 1\), then we have respectively that 
\begin{equation}\label{eq:div}
\frac{\tau^{-}_{k}}{k} \to \infty, 
\qquad \qquad \frac{\tau^{+}_{k}}{k}\to \infty, 
\qquad  \qquad  \frac{\tau_{k}}{k}\to \infty. 
\end{equation}

\section{Proof of Theorem \ref{thm:phys-measures}}
\label{sec:physical}

We split the proof of Theorem \ref{thm:phys-measures} into 3 parts. First we show that Lebesgue almost every point spends asymptotically all its time in either \(I^{-}\) or \( I^{+}\). Then we show that in fact such orbits spend most of the time in arbitrarily small neighbourhoods of the corresponding fixed points \( -1\) and \( + 1\) when measured along the subsequence \( \tau_{k}\). Finally we show that this implies that the same holds for the full sequence of iterates, thus proving the Theorem. 

\subsection{Statistics of orbits in \(I^{-}\) and \( I^{+}\)}

We now go into a bit more detail on the behaviour of the induced observables \(  \tau^-_k, \tau^+_k, \tau_k\). Recall that by definition  \( \tau_k= \tau^+_k+ \tau^-_k\) and therefore  
\begin{equation}\label{eq:tauk1}
\frac{\tau^-_k}{\tau_k}  +  \frac{\tau^+_k}{\tau_k} = \frac{\tau_{k}^{-}+ \tau_{k}^+ }{ \tau_k} = \frac{\tau_k}{\tau_k} =1
\end{equation}
where \( {\tau^-_k}/{\tau_k} \) and \( {\tau^+_k}/{\tau_k} \) are simply the proportion of time that the orbit of \( x \) spends on the left and right intervals respectively in its first \( \tau_k\) iterates corresponding to \( k \) iterations of the induced map. 
The main result of this section shows that when \( \beta\geq 1 \), the largest of \( \beta^-\) and \( \beta^+\) ``gets everything''. 

\begin{prop} 
	\label{prop:existence-of-physical-meaures}
Suppose \( \beta \geq 1\).	 Then 
	\begin{equation}\label{eq:existence-of-physical-meaures}
		\beta^-> \beta^+ \implies \frac{\tau^-_k}{\tau_k} \to 1	
		\quad	\quand \quad 
			\beta^+ > \beta^- \implies \frac{\tau^+_k}{\tau_k} \to 1 
	\end{equation}
	for Lebesgue almost every point \( x\in \Delta_0\). 
\end{prop}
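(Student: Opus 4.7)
The plan is to prove the equivalent statement \(\tau_k^+/\tau_k^- \to 0\) \(\hat\mu\)-a.e.\ on \(\Delta_0\) under the assumption \(\beta^- > \beta^+\); then \(\tau_k^-/\tau_k \to 1\) follows from \eqref{eq:tauk1}, and since \(\hat\mu\) is equivalent to Lebesgue on \(\Delta_0\) we recover Lebesgue a.e.\ convergence. The symmetric case \(\beta^+ > \beta^-\) is identical after swapping the \(\pm\) labels. The natural split is into the subcases \(\beta^+ < 1\) and \(\beta^+ \geq 1\), according to whether \(\tau^+\) is integrable.

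In the easier subcase \(\beta^+ < 1 \leq \beta^-\), the tail estimate \(\hat\mu(\tau^+ > t) \sim C_+ t^{-1/\beta^+}\) from Proposition~\ref{prop:tail-of-tau} (with \(1/\beta^+ > 1\)) shows that \(\tau^+ \in L^1(\hat\mu)\). Birkhoff's ergodic theorem applied to the probability-preserving map \(G : \Delta_0 \to \Delta_0\) then gives \(\tau_k^+/k \to \int \tau^+ \, d\hat\mu < \infty\) a.e.; meanwhile \eqref{eq:div} gives \(\tau_k^-/k \to \infty\). Division yields the claim.

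The main obstacle is the subcase \(1 \leq \beta^+ < \beta^-\), where both averages diverge. The plan is to compare \(\tau_k^\pm\) to their polynomial stable-law scales \(k^{\beta^\pm}\) via the sharp tail asymptotics and Borel--Cantelli-type arguments along doubling subsequences. Fix \(\epsilon > 0\) with \(\beta^+ + \epsilon < \beta^- - \epsilon\). For the lower bound, writing \(t_k = k^{\beta^- - \epsilon}\), the probability that \(\max_{\ell < k} \tau^- \circ G^\ell < t_k\) decays like \(\exp(-c k^{\epsilon/\beta^-})\) (using the approximate independence from the exponential decay of correlations of \(G\) established in \cite{CoaLuzMub22} to substitute for independence in the standard estimate), which is summable along a doubling subsequence, so Borel--Cantelli yields \(\tau_k^- \geq \max_{\ell<k} \tau^- \circ G^\ell \geq t_k\) eventually, almost surely. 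For the upper bound on \(\tau_k^+\), pick any \(\gamma \in (0, 1/\beta^+)\) so that \((\tau^+)^\gamma \in L^1(\hat\mu)\) by Proposition~\ref{prop:tail-of-tau}, truncate at level \(N_k = k^{\beta^+}\), and decompose \(\tau_k^+ = \sum (\tau^+ \wedge N_k) \circ G^\ell + \sum (\tau^+ - N_k)^+ \circ G^\ell\). The bulk is controlled via the pointwise inequality \(\tau^+ \wedge N_k \leq 1 + (\tau^+)^\gamma N_k^{1-\gamma}\) together with Birkhoff for the integrable function \((\tau^+)^\gamma\), giving \(\sum (\tau^+ \wedge N_k) \circ G^\ell \leq C k N_k^{1-\gamma} = C k^{\beta^+ + o(1)}\) a.s.; the excess has expectation \(\sim C k \cdot N_k^{1 - 1/\beta^+} = C k^{\beta^+}\) (for \(\beta^+ > 1\); the boundary case \(\beta^+ = 1\) is handled analogously by bounding count times max), so Markov's inequality combined with Borel--Cantelli along doubling subsequences produces \(\sum (\tau^+ - N_k)^+ \circ G^\ell \leq C k^{\beta^+ + \epsilon}\) eventually a.s. Combining, \(\tau_k^+ \leq C k^{\beta^+ + \epsilon}\), whence \(\tau_k^+/\tau_k^- \leq C k^{(\beta^+ - \beta^-) + 2\epsilon} \to 0\). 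The subtle point throughout is handling the \(k\)-dependent truncation level \(N_k\) via the tail-sensitive inequality above, which converts Birkhoff's theorem for a \(k\)-independent integrable observable into the polynomial upper bound matching the expected stable-law scaling.
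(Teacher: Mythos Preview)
Your case \(\beta^+ < 1 \le \beta^-\) is correct and identical to the paper's argument. For \(1 \le \beta^+ < \beta^-\) you take a genuinely different route: the paper simply quotes \cite{GalHolPer21}*{Proposition 2.8} to get the two-sided bounds \(k^{\beta^\pm - \epsilon} \lesssim \tau_k^\pm \lesssim k^{\beta^\pm + \epsilon}\), and handles the borderline \(\beta^+ = 1\) via Sawyer's lemma \cite{Saw66}. Your attempt to produce these bounds by hand is a reasonable alternative, but there are two real problems in the sketch.

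The serious one is your control of the excess term when \(\beta^+ > 1\). You claim \(E\bigl[\sum_{\ell<k}(\tau^+ - N_k)^+ \circ G^\ell\bigr] \sim C k N_k^{1 - 1/\beta^+}\), but since \(1/\beta^+ < 1\) the integral \(\int_{N_k}^\infty \hat\mu(\tau^+ > t)\,dt\) \emph{diverges}, so the excess has infinite expectation and Markov's inequality gives nothing. The ``count times max'' device you reserve for the case \(\beta^+ = 1\) in fact works here too (and is what you should use throughout): bound the number of exceedances of \(N_k\) via Markov on its finite mean \(\sim k N_k^{-1/\beta^+}\), bound the maximum via the union bound, and multiply. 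As written, though, the \(\beta^+ > 1\) case is not proved.

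The second issue is the lower bound \(\tau_k^- \ge k^{\beta^- - \epsilon}\). Your claim that \(\hat\mu\bigl(\max_{\ell<k}\tau^-\circ G^\ell < t_k\bigr)\) decays like \(\exp(-c k^{\epsilon/\beta^-})\) would be immediate for i.i.d.\ variables, but ``exponential decay of correlations'' does not by itself deliver a product bound over \(k\) consecutive coordinates: the Gibbs--Markov distortion constant compounds as \(C^k\), which swamps the sub-exponential gain. This is fixable by thinning to a sub-lattice \(\ell \in m_k \mathbb Z\) with \(m_k \to \infty\) and using exponential \(\psi\)-mixing at gap \(m_k\), but that step is missing from your outline.
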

It then follows of course from \eqref{eq:tauk1} and \eqref{eq:existence-of-physical-meaures} that when  \( \beta \geq 1\) we have also 
\(
	\beta^-> \beta^+ \implies  {\tau^+_k}/{\tau_k} \to 0	
\) 
and \( 	\beta^+ > \beta^- \implies  {\tau^-_k}/{\tau_k} \to 0.\) so Proposition \ref{prop:existence-of-physical-meaures} implies that whenever at least one of \( \beta_1, \beta_2\)  is \( \geq 1 \) and \( \beta_1\neq \beta_2\), Lebesgue almost every point spends asymptotically all its time either in the left or right interval.

\begin{proof}[Proof of Proposition \ref{prop:existence-of-physical-meaures}]
We will prove the result when \( \beta^{-}> \beta^{+} \), the case \( \beta^{+}> \beta^{-} \) follows by exactly the same arguments. Notice first of all that from \eqref{eq:tauk1} we have
	\[
	1= \frac{\tau_{k}^{-}+ \tau_{k}^+ }{ \tau_k}
	=  \frac{ \tau_{k}^- }{ \tau_k } +  \frac{ \tau_k^+}{ \tau_k } 
	= \frac{ \tau_{k}^- }{ \tau_k } \left( 1 + \frac{ \tau_k^+ }{ \tau_k^- } \right). 
	\]
and it is therefore sufficient to show that 
\begin{equation}\label{eq:equiv}
\frac{\tau^{+}_k}{ \tau_k^-} \to 0 
\end{equation}
as this implies 
\( { \tau_{k}^- }/{ \tau_k }  \to 1 \) as required. 
By assumption \( \beta\geq 1 \) and therefore \( \beta^{-}\geq 1 \), and it is therefore sufficient to consider the following   3 subcases.

\begin{description}
\item[\textbf{1)}]  \( \beta^{-}\geq 1 > \beta^+ > 0\).
\end{description}

In this case we can write 
\[
\frac{\tau^{+}_k}{ \tau_k^-}  = \frac{\tau^{+}_k}{ k}  \frac{k}{ \tau_k^-} 
\]
By  \eqref{eq:divbeta<1} we have that \( {\tau^{+}_k}/{ k}  \) is bounded and by   \eqref{eq:div}  we have  that  \( {k}/{ \tau_k^-} \to 0 \),  implying that \( {\tau^{+}_k}/{ \tau_k^-} \to 0  \) and thus giving \eqref{eq:equiv}. 

\begin{description}
\item{\textbf{2)}  \( \beta^{-}> \beta^+ >    1\) .} 
\end{description}

In this case, the statements in \eqref{eq:divbeta<1} and \eqref{eq:div} do not allow us to immediately draw any  definite conclusions and we need to refer to a non-trivial result of \cite{GalHolPer21} which applies precisely to our case. Indeed,  since  the map \( G \) is Gibbs-Markov, the functions \( \tau^{\pm} \) are constant on each partition element \( \delta_{i,j} \), and  the distribution of \( \tau^{\pm} \) is given by \eqref{eq:dist-of-tau-p}, Proposition 2.8 of  \cite{GalHolPer21} applied to our setting satisfies  gives the following result. 

\begin{lem}[[Proposition 2.8]\cite{GalHolPer21} ]
	\label{lem:holland}
For all \( \epsilon > 0 \), 
\[
\beta^- > 1 \implies 
k^{\beta^{-}-\epsilon} \lesssim \tau^{-}_{k}\lesssim k^{\beta^{-}+\epsilon}
\quand 
\beta^+ > 1 \implies 
k^{\beta^{+}-\epsilon} \lesssim \tau^{+}_{k}\lesssim k^{\beta^{+}+\epsilon}.
\]
\end{lem}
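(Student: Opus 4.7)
The plan is to derive the Lemma directly from the tail estimate of Proposition~\ref{prop:tail-of-tau}, namely \( \hat\mu(\tau^{\pm} > t) \sim C_{\pm} t^{-1/\beta^{\pm}} \), together with the Gibbs--Markov structure of \( G \). Since the proof is symmetric in the two signs, I focus on \( \tau^{-} \) under the assumption \( \beta^{-} > 1 \). In this regime \( \tau^{-} \) has infinite \( \hat\mu \)-mean and lies in the domain of attraction of a one-sided \( (1/\beta^{-}) \)-stable law, so one expects \( \tau^{-}_{k} \) to be of order exactly \( k^{\beta^{-}} \), with polynomially small corrections; the two-sided bound in the Lemma records this, absorbing any slowly varying factors into \( k^{\pm \epsilon} \). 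The natural strategy is to prove the upper and lower bounds separately, controlling the truncated sum for the former and exhibiting a single large exceedance for the latter.

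For the upper bound, fix \( \epsilon > 0 \) and set the truncation level \( T_{k} := k^{\beta^{-} + \epsilon/2} \). Proposition~\ref{prop:tail-of-tau} gives \( \hat\mu(\tau^{-} > T_{k}) \lesssim k^{-1 - \epsilon/(2\beta^{-})} \); summing over \( \ell \leq k \) and over \( k \) and exploiting the exponential decay of correlations of the Gibbs--Markov map \( G \) to obtain an effective Borel--Cantelli conclusion, we get that \( \hat\mu \)-a.s., \( \tau^{-} \circ G^{\ell} \leq T_{k} \) for every \( \ell \leq k \) once \( k \) is large. Then we bound the truncated integral directly:
\[
\int \tau^{-} \mathbb{1}_{\{\tau^{-} \leq T_{k}\}} \, d\hat\mu \,=\, \int_{0}^{T_{k}} \hat\mu(\tau^{-} > t)\, dt \,\lesssim\, T_{k}^{1 - 1/\beta^{-}} \,=\, k^{(\beta^{-} - 1)(\beta^{-} + \epsilon/2)/\beta^{-}},
\]
and apply Birkhoff's ergodic theorem to the bounded observable \( \tau^{-} \mathbb{1}_{\{\tau^{-} \leq T_{k}\}} \) under \( G \), which yields \( \tau^{-}_{k} \lesssim k \cdot k^{\beta^{-} - 1 + O(\epsilon)} = k^{\beta^{-} + \epsilon} \) for \( k \) large, as desired.

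For the lower bound, set \( t_{k} := k^{\beta^{-} - \epsilon} \). Then \( \hat\mu(\tau^{-} > t_{k}) \gtrsim k^{-1 + \epsilon/\beta^{-}} \), so the expected number of \( \ell \leq k \) with \( \tau^{-} \circ G^{\ell} > t_{k} \) is at least \( k^{\epsilon/\beta^{-}} \to \infty \). A second Borel--Cantelli argument, again using the Gibbs--Markov mixing of \( G \) to upgrade this to an almost-sure statement, produces at least one such exceedance, and since \( \tau^{-} \geq 0 \) we get \( \tau^{-}_{k} \geq t_{k} = k^{\beta^{-} - \epsilon} \). The main obstacle in both halves is that the terms \( \tau^{-} \circ G^{\ell} \) are \emph{not} independent, so neither Borel--Cantelli step nor the ergodic-theorem averaging is automatic; turning the i.i.d.\ heuristic into a rigorous statement for Gibbs--Markov systems is exactly the content of the cited result from~\cite{GalHolPer21}, whose quantitative large-deviation estimates for non-integrable observables drive the argument above.
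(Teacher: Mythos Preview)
The paper does not actually prove this lemma: it is stated as a direct citation of Proposition~2.8 of \cite{GalHolPer21}, with the only ``proof'' being the verification that the hypotheses there apply (namely that \( G \) is Gibbs--Markov, that \( \tau^{\pm} \) is constant on partition elements, and that the tail asymptotic \eqref{eq:dist-of-tau-p} holds). Your proposal goes further than the paper by sketching the mechanism behind the cited result, and you are right that the heart of the matter is a truncation/Borel--Cantelli argument driven by the regularly varying tail of \( \tau^{\pm} \).

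That said, your sketch has a real gap in the upper bound. You invoke Birkhoff's ergodic theorem for the observable \( \tau^{-}\mathbb{1}_{\{\tau^{-}\leq T_{k}\}} \) with a truncation level \( T_{k} \) that \emph{depends on} \( k \). Birkhoff only gives, for each fixed \( T \), that \( k^{-1}\sum_{\ell<k}(\tau^{-}\wedge T)\circ G^{\ell}\to \int (\tau^{-}\wedge T)\,d\hat\mu \); it says nothing about the rate, so you cannot simply substitute \( T=T_{k} \) and conclude \( \tau^{-}_{k}\lesssim k\cdot \int (\tau^{-}\wedge T_{k})\,d\hat\mu \). A na\"ive diagonal argument along dyadic \( T \) fails for the same reason: you have no control on how large \( k \) must be, as a function of \( T \), before the ergodic average is close to its mean. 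What is actually needed is a \emph{quantitative} estimate on the deviation probability \( \hat\mu\big(\sum_{\ell<k}(\tau^{-}\wedge T_{k})\circ G^{\ell} > C k\,\mathbb{E}[\tau^{-}\wedge T_{k}]\big) \) that is summable in \( k \); this is precisely where the Gibbs--Markov large-deviation/moment estimates of \cite{GalHolPer21} do the work, and it is not a detail one can absorb into the phrase ``apply Birkhoff''. A similar issue affects your lower bound: the events \( \{\tau^{-}\circ G^{\ell}>t_{k}\} \) form a triangular array rather than a single sequence, so the second Borel--Cantelli lemma does not apply directly and one again needs the quasi-independence quantified in the cited reference. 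Since you ultimately defer to \cite{GalHolPer21} for exactly this, your proposal is consistent with the paper's treatment, but you should not present the Birkhoff step as if it closes the argument on its own.
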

This  implies 
\(
	{ \tau^{+}_k }/{ \tau_k^-} \lesssim  {k^{\beta^{+}+\epsilon} }/{k^{\beta^{-}-\epsilon}} 
	   \)
almost surely, and thus implies \eqref{eq:equiv} if \( \epsilon \) is sufficiently small.

\begin{description}
\item{\textbf{3)} \( \beta^{-}> \beta^+ =   1\).} 
\end{description}

Lemma \ref{lem:holland} requires \( \beta^{-}\) and \( \beta^{+}\) to be \emph{strictly} greater than 1 and therefore we cannot apply the argument above completely to this setting but we can conclude that 
	 \begin{equation}\label{eq:taukbound}
	 \frac{ \tau^{+}_k }{ \tau_k^-} \lesssim  \frac{\tau_k^{+}}{k^{\beta^{-}-\epsilon}}
	   \end{equation}
Letting 
\(
q:= {1}/{(\beta^{-}-\epsilon)}
\) and using the definition of \( \tau_k^{+}\) in \eqref{eq:deftaupm}  it will be convenient to write this as 
 \begin{equation}\label{eq:taukbound2}
	 \frac{ \tau^{+}_k }{ \tau_k^-} \lesssim  \frac{\tau_k^{+}}{k^{1/q}}
	 = \frac{\tau^{+}+(\tau^{+} \circ G) + \cdots + (\tau^{+} \circ G^{k-1} )}{k^{1/q}}
	   \end{equation}
Since \( \hat \mu \) is \( G \) invariant, the summands \( \tau^+ \circ G^\ell \) are identically distributed	  and 
Proposition \ref{prop:tail-of-tau}  gives 
\[
 \mu ( (\tau^{+})^{q}  > t ) = \mu (\tau^{+}> t^{1/q} )  \sim C_{+} t^{-1/q\beta^+} =
  C_{+} t^{-(\beta^{-}-\epsilon)/\beta^+}
\] 
which implies that   \( (\beta^{-}-\epsilon)/\beta^+> 1 \) and therefore  \( \tau^+ \in L^q ( \hat \mu ) \) if  \( \epsilon \) is small enough. Then we can apply  the following classical and remarkable result. 
\begin{lem}\cite{Saw66}*{Corollary to Lemma 3}
\label{lem:saw}
Let \( (X, \mu) \) be a probability space and suppose  \( \varphi_{n}  \) are identically distributed random variables with  \( \varphi_{n} \in L^{q} \) for some  \( q \in (0,1)\). 
Then. \( \mu \)-almost surely, 
\[
\frac{\varphi_{1}+ \cdots + \varphi_{n}}{n^{1/q}} \to 0. 
\]
\end{lem}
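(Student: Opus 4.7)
The plan is to exploit the subadditivity inequality for concave powers: for $q \in (0,1)$ and any real $a_1, \dots, a_n$, we have
\[
\Big|\sum_{k=1}^n a_k\Big|^q \leq \sum_{k=1}^n |a_k|^q.
\]
Applied to $\varphi_k$, this reduces the problem to controlling $T_n \coloneqq \sum_{k=1}^n |\varphi_k|^q$. Since $|\varphi_k|^q \in L^1(\mu)$ with $\int |\varphi_k|^q \, d\mu = C$ for all $k$ by identical distribution, the naive bound $|S_n|^q \leq T_n$ only yields $|S_n| = O(n^{1/q})$ in expectation, which is not strong enough for the stated almost sure conclusion.

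To sharpen this, I would use a truncation at a large fixed level. Fix $M > 0$ and write $\varphi_k = Y_k^M + Z_k^M$, where $Y_k^M \coloneqq \varphi_k \mathbf{1}_{\{|\varphi_k| \leq M\}}$. The bounded part satisfies $\big|\sum_{k=1}^n Y_k^M\big| \leq Mn$, so $\big|\sum_{k=1}^n Y_k^M\big| / n^{1/q} \leq M n^{1 - 1/q} \to 0$, since $q < 1$ forces $1 - 1/q < 0$. For the tail, the subadditivity inequality applied to the $Z_k^M$ gives $\big|\sum_{k=1}^n Z_k^M\big|^q \leq \sum_{k=1}^n |Z_k^M|^q$, whose integral equals $n \, \delta(M)$, where $\delta(M) \coloneqq \int |\varphi_1|^q \mathbf{1}_{\{|\varphi_1| > M\}} \, d\mu \to 0$ as $M \to \infty$ by dominated convergence. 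Markov's inequality then yields, for every $\varepsilon > 0$,
\[
\mu\Big( \big|\sum_{k=1}^n Z_k^M\big|/n^{1/q} > \varepsilon \Big) \leq \delta(M)/\varepsilon^q,
\]
which is small \emph{uniformly in $n$} once $M$ is chosen large.

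The main obstacle is the absence of any independence hypothesis, which rules out the classical Marcinkiewicz--Zygmund proof via Kolmogorov's three-series theorem. Following the spirit of Sawyer's paper (whose title refers to maximal inequalities of weak type), I would establish a weak-type maximal inequality controlling $\sup_{n} \big|\sum_{k \leq n} Z_k^M\big|/n^{1/q}$ uniformly, and then promote the convergence in probability above to almost sure convergence by a Borel--Cantelli argument along a geometric subsequence $n_j = 2^j$. The identical distribution assumption enters only through the uniform bound $\int |Z_k^M|^q \, d\mu = \delta(M)$ for every $k$, so genuine independence is never needed.
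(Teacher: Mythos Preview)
The paper does not prove this lemma; it is quoted verbatim as the Corollary to Lemma~3 in Sawyer's 1966 paper and used as a black box. So there is no proof in the paper to compare against, and your task is really to reconstruct Sawyer's argument.

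Your truncation strategy and the treatment of the bounded piece \( \sum Y_k^M \) are correct, and you correctly recognise that the naive subadditivity bound \( |S_n|^q \le T_n \) is too weak by itself. However, the decisive step---the weak-type maximal inequality
\[
\mu\Big( \sup_n n^{-1/q}\Big|\sum_{k\le n} Z_k^M\Big| > \varepsilon \Big) \le C\,\varepsilon^{-q}\,\delta(M)
\]
valid under \emph{only} the identical-distribution hypothesis---is exactly the content of Sawyer's Lemma~3, and you assert it rather than prove it. That inequality is the whole point; once it is available, the truncation argument closes immediately (take \( M\to\infty \), then \( \varepsilon\to 0 \)), with no further ingredient needed.

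Your suggested alternative route, Borel--Cantelli along \( n_j = 2^j \), does not work with the bound you actually derived. Markov's inequality gives
\[
\mu\Big( n^{-1/q}\Big|\sum_{k\le n} Z_k^M\Big| > \varepsilon \Big) \le \frac{\delta(M)}{\varepsilon^q}
\]
for every \( n \), and the right-hand side is \emph{independent of \( n \)} (the powers of \( n \) cancel). Summing this constant along any subsequence diverges, so Borel--Cantelli yields nothing; nor do you have convergence in probability to ``promote'', since the bound does not tend to zero as \( n\to\infty \). The only way forward is to prove the maximal inequality itself, which is Sawyer's contribution and is not elementary.
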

Applying  \ref{lem:saw} to  \eqref{eq:taukbound2} implies  \eqref{eq:equiv} and thus completes the proof.
\end{proof}

\subsection{Statistics of orbits near the fixed points for the subsequence \( \tau_{k}\) }
Proposition \ref{prop:existence-of-physical-meaures} tells us that depending on the  relative values of \( \beta^{-}, \beta^{+}\) orbits spend asymptotically all their time inside either the left or right subintervals \( I^{-}, I^{+}\).  We are however especially interested in how much time orbits spend close to the two fixed points and in this section we will show that actually most of the time spent in these intervals is spent in arbitrarily small neighbourhoods of the corresponding fixed points. 
To formalize  this statement,   for \( \eps > 0 \) we define the intervals
\begin{equation}
	U^{+}_{\eps} \coloneqq ( 1 - \eps, 1 ), \quand
	U^{-}_{\eps} \coloneqq ( -1 , -1 + \eps ),
\end{equation}
and then define the functions \( S_{n,\eps}^{\pm} : [-1,1] \to \mathbb{N} \) and  \( 	  S_{n,\eps} : [-1,1] \to \mathbb{N} \)  by 
\begin{equation}
	\label{eq:def-S-n-eps-pm}
        S_{n, \eps}^{-} \coloneqq \sum_{k = 0}^{ n - 1 } \mathbb{1}_{U_{\eps}^{-}} \circ g^{k},  
	\qquad
	        S_{n, \eps}^{+} \coloneqq \sum_{k = 0}^{ n - 1 } {\mathbb{1}}_{U_{\eps}^{+}} \circ g^{k}, 
	        \quand 
S_{n, \eps}=  S_{n,\eps}^{-} + S_{n,\eps}^{+}.          
\end{equation}
The functions \( S_{n,\eps}^{-}\) and \( S_{n,\eps}^{+}\) simply count the number of iterates of a point which belong to the neighbourhoods  \( U^{-}_{\eps} \) or \( U^{+}_{\eps}\) respectively, in the first \( n \) iterates. 

\begin{prop}\label{lem:S-vs-tau-k}
For every  \( \varepsilon > 0 \) and Lebesgue  almost-every \( x \in \Delta_0 \),
	\begin{equation}
		\label{eq:S-tau-k-pm-vs-tau-k-pm}
		\beta^{-} \geq  1 \implies  \frac{S_{\tau_k, \eps}^{-}}{\tau_k^-}\to 1,
		\qquad 
		\beta^{+}\geq  1 \implies  \frac{S_{\tau_k, \eps}^{+}}{\tau_k^+}\to 1,
		\qquad 
		\beta \geq  1 
	\implies 
		\frac{S_{\tau_k, \eps}}{\tau_k} \to 1. 
	\end{equation}
\end{prop}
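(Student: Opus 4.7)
The plan is to exploit the explicit combinatorial structure of the induced map \( G \): within every excursion between consecutive returns to \( \Delta_0 \), only a uniformly bounded number of iterates can fail to lie in the neighbourhoods \( U_{\eps}^{\pm} \), no matter how long the excursion is. Summing over \( k \) excursions produces a discrepancy of order \( k \), which is dominated by \( \tau_k^{\pm} \) when \( \beta^{\pm} \geq 1 \) thanks to the divergence~\eqref{eq:div}.

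To make this precise, I would first use the fact, recalled in Section~\ref{sec:recall}, that \( \{ \Delta_n^{\pm} \}_{n \geq 0} \) is a monotone partition of \( I^{\pm} \) whose only accumulation points are \( \pm 1 \). This furnishes an integer \( N_{\eps} \geq 1 \), depending only on \( \eps \) and \( g \), such that
\[
\Delta_n^{+} \subset U_{\eps}^{+} \quand \Delta_n^{-} \subset U_{\eps}^{-} \qquad \text{for every } n \geq N_{\eps}.
\]
Then, for \( x \in \delta_{i,j} \subset \Delta_0 \), the construction of \( G \) gives an explicit inventory of the first \( i+j \) iterates: the iterates \( g(x), \ldots, g^i(x) \) visit respectively \( \Delta_{i-1}^{+}, \ldots, \Delta_0^{+} \), accounting for the \( \tau^+ = i \) iterates in \( I^{+} \), while the starting point \( x \) together with \( g^{i+1}(x), \ldots, g^{i+j-1}(x) \) account for the \( \tau^{-} = j \) iterates in \( I^{-} \) and visit the elements \( \Delta_{0}^{-}, \Delta_{j-1}^{-}, \ldots, \Delta_1^{-} \). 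By the choice of \( N_\eps \), at most \( N_\eps \) of the iterates in \( I^{+} \) (those falling in \( \Delta_{N_\eps-1}^{+}, \ldots, \Delta_0^{+} \)) and at most \( N_\eps \) of those in \( I^{-} \) during one excursion can fail to lie in \( U_\eps^{+} \) and \( U_\eps^{-} \) respectively.

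Iterating this deterministic per-excursion bound over the \( k \) successive \( G \)-excursions yields
\[
\tau_k^{+} - S_{\tau_k, \eps}^{+} \leq N_\eps k, \qquad
\tau_k^{-} - S_{\tau_k, \eps}^{-} \leq N_\eps k, \qquad
\tau_k - S_{\tau_k, \eps} \leq 2 N_\eps k,
\]
valid for every \( x \in \Delta_0 \) and every \( k \geq 1 \). Dividing through by \( \tau_k^{\pm} \) or \( \tau_k \) and invoking~\eqref{eq:div}, which asserts that \( \tau_k^{\pm}/k \to \infty \) when \( \beta^{\pm} \geq 1 \) and \( \tau_k/k \to \infty \) when \( \beta \geq 1 \) for \( \hat\mu \)-a.e., and hence Lebesgue-a.e., \( x \in \Delta_0 \), yields all three limits in~\eqref{eq:S-tau-k-pm-vs-tau-k-pm}.

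I do not expect a serious obstacle. The whole argument is essentially combinatorial and reduces to the topological observation that long excursions into \( I^{\pm} \) must be almost entirely absorbed into arbitrarily small neighbourhoods of \( \pm 1 \). The only point that requires any care is the bookkeeping of which elements of the partitions \( \{ \Delta_n^{\pm} \} \) are visited during each excursion, for which the description of \( g^n \) on the sets \( \delta_{i,j} \) recalled in Section~\ref{sec:recall} is exactly the right tool.
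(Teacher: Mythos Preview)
Your proposal is correct and follows essentially the same approach as the paper's own proof: define a threshold \( N_\eps \) so that \( \Delta_n^{\pm} \subset U_\eps^{\pm} \) for \( n \geq N_\eps \), deduce a uniform per-excursion bound \( \tau^{\pm} - S_{\tau,\eps}^{\pm} \leq N_\eps + O(1) \), sum over \( k \) excursions, divide by \( \tau_k^{\pm} \) (resp.\ \( \tau_k \)), and conclude via~\eqref{eq:div}. The paper's bookkeeping differs only cosmetically (it carries an extra \( -1 \) in the per-excursion estimate), so there is nothing substantive to add.
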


\begin{proof}
Recall the definition of the partitions \( \{ \Delta_{n}^{\pm} \} \) in \eqref{eq:Delta} and,  
for  \( \eps > 0 \), let 
 \[
  N_{\varepsilon}^{\pm} \coloneqq \max \left\{ N : U_{\eps}^{\pm} \subseteq \bigcup_{k = N}^{\infty} \Delta_{k}^\pm \right\}.
  \]
Then it is easy to see from the definition of the partition \( \{ \delta_{i,j} \} \) in  \eqref{eq:def-delta-i-j} and \eqref{eq:noindex} and the properties of the induced map  that all points in \( \delta_{i,j}\) with \( i, j \geq N_{\varepsilon}\) will spend all but \( N_{\varepsilon}^{+} \) and \( N_{\varepsilon}^{-} \) iterates inside  \(U_{\eps}^{-}\) and  \(U_{\eps}^{+}\) respectively before they return at time \( \tau = i+j\).  More formally,  if \( x \in \delta_{i,j} \) and \( i > N^{+} \) then \( g^{k} (x) \in U^+_{\eps} \) for \( k < i - N^{+} \) and \( g^{k} \not\in U^{+}_{\varepsilon} \) for \(  i - N^{+} < k \leq i \), similarly  if \( x \in \delta_{i,j} \) and \( j > N^{-} \) then \( g^{i + k} (x) \in U^+_{\eps} \) for \( 1 \leq k < j - N^{+} \) and \( g^{i + k} \not\in U^{+}_{\varepsilon} \) for \(  j - N^{+} < k \leq j \). Therefore, from the definitions in 
 \eqref{eq:def-tau-pm}, \eqref{eq:def-S-n-eps-pm} it follows that for every  \( x \in \Delta_0 \) we have
	\begin{equation}
		\label{eq:ineq-tau-S_n}
		\tau^+ (x)  - N_{\eps}^{+} - 1
		\leq S^{+}_{\tau (x),\varepsilon} (x) \leq \tau^{+} (x),\quand
		\tau^- (x)  - N_{\eps}^{-} - 1
		\leq S^{-}_{\tau (x),\varepsilon} (x) \leq \tau^{-} (x).
	\end{equation}
Notice that this holds even if \( \tau^{\pm}(x) \leq N_{\varepsilon}^{\pm}\) since then the left hand side of the corresponding inequality is negative. 	
	From the definition of \( \tau_{k} \) we can  write 
	\[
		S_{\tau_k(x),\varepsilon}^\pm (x) = S_{\tau(x), \eps}^\pm (x) + S_{\tau ( G(x) ), \eps }^{\pm} ( G( x ) ) + \cdots + S_{\tau (G^{k-1} (x) ), \eps}^{\pm} ( G^{k-1} (x) ).
	\]
	Applying the inequalities in \eqref{eq:ineq-tau-S_n} to each term in the sum above we get 
	\begin{equation}
		\label{eq:S-tau-k-vs-tau-k}
	\tau^\pm_k (x)  -  k ( N_{\eps}^\pm + 1 ) = 	\sum_{ m = 0 }^{ k  - 1 } \tau^\pm \circ G^m (x)  - k ( N_{\eps}^\pm + 1 ) \leq S_{\tau_k (x), \eps }^{ \pm } (x) \leq  \sum_{ m = 0 }^{ k  - 1 } \tau^\pm \circ G^m (x) = \tau^\pm_k (x) 
	\end{equation}
	and so, dividing through by \( \tau^\pm_k (x) \), gives 
	\[
		1 - \frac{ k }{ \tau^\pm_k (x) }( N^\pm_\eps + 1 ) \leq \frac{ S_{ \tau_k (x), \eps }^\pm (x) }{ \tau_k^{\pm}  (x) }  \leq 1.
	\]
From \eqref{eq:div} we have   \( k/{\tau_k^\pm (x) } \to 0\) 	for \( \hat \mu \) almost every \( x \in \Delta_0 \), yielding \eqref{eq:S-tau-k-pm-vs-tau-k-pm}. From \eqref{eq:S-tau-k-vs-tau-k} we also get 
	\[
		\tau_k(x) - k ( N_\eps^+ + N_{\eps}^- + 2 ) \leq S_{\tau_k(x), \eps}^{+} (x) + S_{\tau_k(x), \eps}^-(x) \leq \tau_k(x).
	\]
	Dividing through by \( \tau_k(x) \) and applying \eqref{eq:div}  as above, we get \eqref{eq:S-tau-k-pm-vs-tau-k-pm} and complete the~proof.
\end{proof}
\subsection{Statistics of orbits near the fixed points}

We are now ready to prove Theorem \ref{thm:phys-measures}. 

\begin{proof}[Proof of Theorem \ref{thm:phys-measures}]
We will show that for every  \( \varepsilon > 0 \) and  Lebesgue almost every point \( x\in \Delta_0\),
	\begin{equation}\label{eq:seq}
	\beta^{-} >  \beta^+ \implies  \frac{S_{n, \eps}^{-}}{n}\to 1,
		\qquad 
		\beta^{+} >  \beta^- \implies  \frac{S_{n, \eps}^{+}}{n}\to 1.
	\end{equation}
This clearly implies the statement of 	Theorem \ref{thm:phys-measures}. 
Notice first of all that 	from Propositions \ref{prop:existence-of-physical-meaures} and~\ref{lem:S-vs-tau-k} we immediately get that for every  \( \varepsilon > 0 \) and  Lebesgue almost every point \( x\in \Delta_0\),
	\begin{equation}\label{eq:existence-of-physical-measures}
		\beta^-> \beta^+ \implies \frac{S_{\tau_k, \eps}^{-}}{\tau_k} \to 1	
		\quad	\quand \quad 
			\beta^+ > \beta^- \implies \frac{S_{\tau_k, \eps}^{+}}{\tau_k} \to 1. 
	\end{equation}
We therefore just   need to replace the convergence along the subsequence \( \tau_{k}\) with convergence along the full sequence. 
Suppose first that \( \beta^{+} >  \beta^- \). Let \( x \in \Delta_{0}^{-}\) and consider the sequence of iterates \( g^{i}(x)\) for \( 1\leq i \leq \tau(x)\). Recall from the construction of the induced map that the iterates for which \( g^{i}(x) \in U^{-}_{\varepsilon} \) all lie at the ``beginning'' of the sequence, i.e. once the orbit leaves the neighbourhood \( U^{-}_{\varepsilon}  \) it cannot return to it before the next return to \( \Delta^{-}_{0}\). More formally, either  \( g^{i}(x)\notin U^{-}_{\varepsilon}\) for all \( 0\leq i \leq  \tau(x)\) (i.e. the finite piece of orbit never enters \( U^{-}_{\varepsilon} \),  or there exists an integer \( 1\leq m_{\varepsilon} < \tau(x)\) such that  \( g^{i}(x)\in U^{-}_{\varepsilon}\) for all \( 0 <  i \leq m_{\varepsilon}\) and \( g^{i}(x)\notin U^{-}_{\varepsilon}\) for all \( m_{\varepsilon}\leq i \leq \tau(x)\). This means that the ratio \( {S_{n, \eps}^{+}}/{n} \) is always larger \emph{in-between} returns that at the \emph{following} return or, more formally,  letting  \( k \geq 1\) be the smallest integer such that \( \tau_{k}\geq n\), we have   
\[
1\geq \frac{S_{n, \eps}^{+}}{n} \geq \frac{S_{\tau_{k}, \eps}^{+}}{\tau_{k}}\
\]
By  \eqref{eq:existence-of-physical-measures} this  implies \eqref{eq:seq}  in the case \( \beta^{+} >  \beta^- \). The case \( \beta^{-}> \beta^{+}\) follows replacing \( - \) by \( + \) in \eqref{eq:noindex} and carrying out exactly the same argument. 
\end{proof}

\section{Proof of Theorem \ref{thm:no-phys-measures}}
In this section we assume throughout that  \( g\in \mathfrak{F}_* \) and therefore \( \beta^{-}=\beta^{+}=\beta\). The main technical result in the proof of Theorem \ref{thm:no-phys-measures} is the following. 

\begin{prop}\label{prop:limsupliminf}
  For  Lebesgue almost every \( x\in \Delta_{0} \)  we have 
  \begin{equation}
    \label{eq:tilde-tau-limsup-liminf>1}
    \limsup_{k\to\infty} \frac{ S_{n,\eps}^+ ( x ) }{n} = 
    \limsup_{k \to\infty} \frac{ S_{n, \eps }^{-} ( x ) }{n} = 1.
  \end{equation}
\end{prop}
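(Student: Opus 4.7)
The plan is to reduce the statement to one about the induced Birkhoff sums $\tau_k^\pm$ of Section \ref{sec:recall} and then establish a record-dominance result for these sums. By Proposition \ref{lem:S-vs-tau-k}, for $\beta^\pm\geq 1$ we have $S^\pm_{\tau_k,\eps}/\tau_k^\pm \to 1$ almost surely, hence
\[
\frac{S^\pm_{\tau_k,\eps}}{\tau_k} = \bigl(1+o(1)\bigr)\,\frac{\tau_k^\pm}{\tau_k}.
\]
Since $S^\pm_{n,\eps}\le n$ automatically, while restricting to the subsequence $n_k = \tau_k(x)$ only decreases the limsup, it suffices to show that for Lebesgue-a.e.\ $x\in\Delta_0$,
\[
\limsup_{k\to\infty}\frac{\tau_k^+(x)}{\tau_k(x)} = 1 \quand \limsup_{k\to\infty}\frac{\tau_k^-(x)}{\tau_k(x)} = 1.
\]

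The core of the proof is a \emph{record dominance} claim: for every $M>1$ and $\hat\mu$-a.e.\ $x$ there exist infinitely many indices $k$ with
\[
\tau^+\!\circ G^k(x) \;\ge\; M\bigl(\tau_k(x) + \tau^-\!\circ G^k(x)\bigr).
\]
At any such $k$, a direct computation gives $\tau_{k+1}^+ \ge \tau^+\!\circ G^k(x) > M\tau_{k+1}^-$, and hence $\tau_{k+1}^+/\tau_{k+1} > M/(M+1)$. Applying the claim for $M = M_j \to \infty$ along a countable sequence and intersecting the resulting full-measure sets produces, for almost every $x$, a subsequence along which $\tau_k^+/\tau_k \to 1$. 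The identical argument with the roles of $+$ and $-$ exchanged, which is available because $\beta^+=\beta^-=\beta$ makes $\tau^+$ and $\tau^-$ have the same tail exponent in Proposition \ref{prop:tail-of-tau}, then yields $\limsup_k \tau_k^-/\tau_k = 1$.

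Record dominance itself is to be established by a conditional Borel--Cantelli argument on the Gibbs--Markov induced map. Letting $\mathcal{F}_k := \sigma(\tau^\pm\!\circ G^j : j<k)$, the quantity $\tau_k$ is $\mathcal{F}_k$-measurable while $\tau^\pm\!\circ G^k$ is approximately independent of $\mathcal{F}_k$ by the exponential mixing of $G$. Combined with the heavy tail $\hat\mu(\tau^+>t)\sim C_+ t^{-1/\beta}$ of Proposition \ref{prop:tail-of-tau} and the fact that $\hat\mu(\tau^- > \tau_k) \to 0$, this yields a conditional lower bound of the form
\[
\hat\mu\bigl(\tau^+\!\circ G^k > M(\tau_k+\tau^-\!\circ G^k)\,\bigm|\,\mathcal{F}_k\bigr) \;\asymp\; (M\tau_k)^{-1/\beta}.
\]
Because the distributional asymptotics of the heavy-tailed Birkhoff sum (Darling--Kac-type, with $\tau_k/k^\beta$ converging to a positive stable law when $\beta>1$, or $\tau_k/(k\log k)$ when $\beta=1$) make $\tau_k$ typically of size $k^\beta$, the conditional sum $\sum_k(M\tau_k)^{-1/\beta}$ diverges almost surely, and a conditional Borel--Cantelli lemma for Gibbs--Markov towers converts this divergence into the desired a.s.\ infinitely-often statement. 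The main obstacle lies precisely in this step: the thresholds $M\tau_k$ are random and $\mathcal{F}_k$-measurable, so the Borel--Cantelli conversion has to be carried out conditionally with uniform-in-history control. The boundary case $\beta=1$ is especially delicate, as the estimates are only marginally strong enough for divergence, forcing a logarithmic refinement of the upper bound in Lemma \ref{lem:holland} and a careful treatment of the joint distribution of $(\tau^+,\tau^-)$ along the tower.
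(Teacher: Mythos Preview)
Your reduction is sound and parallels the paper's: showing that for a.e.\ $x$ there are infinitely many $k$ with $\tau^+\circ G^k(x)$ dominating the whole past $\tau_k(x)$ (the paper phrases this as $\limsup_k \tau^+\circ G^k/\tau_k=\infty$, which is equivalent to your record-dominance claim for every $M$). The difficulty is exactly where you locate it, and your proposed resolution does not close the gap. A conditional Borel--Cantelli with random thresholds $M\tau_k$ requires the almost-sure divergence of $\sum_k \tau_k^{-1/\beta}$, and this is \emph{not} available from the almost-sure growth estimates at hand: Lemma~\ref{lem:holland} only gives $\tau_k\lesssim k^{\beta+\epsilon}$ a.s., which yields the lower bound $\tau_k^{-1/\beta}\gtrsim k^{-1-\epsilon/\beta}$, a summable series. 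Distributional convergence of $\tau_k/k^\beta$ to a stable law does not translate into an a.s.\ upper bound sharp enough to force divergence, so the conditional scheme as stated stalls before the $\beta=1$ refinement is even reached. The ``approximate independence'' of $\tau^\pm\circ G^k$ from $\mathcal F_k$ is also left at the heuristic level.

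The paper sidesteps the random-threshold problem entirely. Rather than conditioning, it replaces $\tau_k$ by the \emph{deterministic} threshold $k^\beta(T+d_k)$ and works with the events
\[
A_k=\{\tau_k<k^\beta(T+d_k)\}\cap G^{-k}\{\tau^+>a_k k^\beta(T+d_k)\},
\]
where $a_k\to\infty$ slowly. The stable law (Lemma~\ref{lem:stable-law}) guarantees $\hat\mu(\tau_k<k^\beta(T+d_k))$ is bounded below uniformly in $k$, so by bounded distortion $\hat\mu(A_k)\approx\hat\mu(\tau^+>a_k k^\beta(T+d_k))\approx 1/(a_k^{1/\beta}k(T+d_k))$, which is arranged to be non-summable by the choice of $a_k$. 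Quasi-independence $\hat\mu(A_n\cap A_k)\lesssim\hat\mu(A_n)\hat\mu(A_k)$ follows from bounded distortion alone, and the R\'enyi version of Borel--Cantelli gives $\hat\mu(\limsup A_k)>0$; ergodicity of $G$ (the set $\mathcal A^+$ is $G$-invariant) upgrades this to full measure. This handles $\beta>1$ and $\beta=1$ uniformly, the only difference being the centering $d_k\approx\log k$ in the latter case. You should replace the conditional scheme with this deterministic-threshold argument.
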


This immediately implies that the sequence \( \mu_{n}(x) \) does not converge and that therefore \( g \) does not admit any physical measures, and thus implies the first part of 
Theorem \ref{thm:no-phys-measures}. In Section \ref{subsec1} we reduce   the proof of Proposition~\ref{prop:limsupliminf} to a more technical statement formulated in Proposition \ref{prop:tau-p-over-tau-k1}. In Section \ref{sec:proof} we prove Proposition \ref{prop:tau-p-over-tau-k1} and finally, in Section \ref{sec:accpoints} we will  use \eqref{eq:tilde-tau-limsup-liminf>1} to show that the set of limit points of the sequence \( \mu_{n}(x) \)  consists of the set \( \Omega\), completing the proof of Theorem \ref{thm:no-phys-measures}.

\subsection{Using the Gibbs-Markov structure}\label{subsec1}
in this section we reduce the proof of Proposition~\ref{prop:limsupliminf} to a statement about the positive measure of certain sets which are defined in terms of the Gibbs-Markov structure of the map \( G \) and of the observable \( \tau^-, \tau^+, \tau\). 
First of all let  
\[
\mathcal{A}^{+} \coloneqq \left\{ x : \limsup_{k\to\infty} \frac{ \tau^{+}\circ G^k (x) }{ \tau_k (x) } = +\infty \right\}
\quand 
\mathcal{A}^{-} \coloneqq  \left\{ x : \limsup_{k\to\infty} \frac{ \tau^{-}\circ G^k (x) }{ \tau_k (x) } = +\infty \right\}.
\]
Notice that the ratio \( { \tau^{+}\circ G^k (x) }/{ \tau_k (x) } \) compares the value of \( \tau^{+}\) at the \(k\)'th return time, i.e. essentially the number of iterates  the orbit spends near the endpoint \( 1 \) between the \(k\)'th and \( k+1\)  return time,  with the total accumulated length \( \tau_{k}\) of the orbit up to it's \( k \)'th return. If this ratio is big it means that time spent near 1   dominates whatever behaviour the orbit may have exhibited up to that time, and for  points in \( \mathcal A^{+}\),  this ratio is \emph{arbitrarily large} infinitely often. Similar observations hold  for \( \mathcal A^-\) and therefore  points   \( x \in \mathcal A^{+}\cap \mathcal A^{-}\) satisfy \eqref{eq:tilde-tau-limsup-liminf>1}, as formalized in the following Lemma.

\begin{lem}\label{lem:a+limsup}
\[
x\in \mathcal A^{+} \Longrightarrow \limsup_{ n \to \infty} \frac{S_{n,\eps}^+}{ n} = 1
\quand 
x\in \mathcal A^{-} \Longrightarrow \limsup_{ n \to \infty} \frac{S_{n,\eps}^-}{ n} = 1
\]
In particular, if \( x \in \mathcal A^{+}\cap \mathcal A^{-}\) then  \( x \) satisfies \eqref{eq:tilde-tau-limsup-liminf>1}
\end{lem}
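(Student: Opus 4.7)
The plan is to use the precise topological description of each excursion of the orbit of $x$ from $\Delta_0$ back to $\Delta_0$ under the induced map $G$, recalled in Section~\ref{sec:recall}, to exhibit, for each $x \in \mathcal{A}^{\pm}$, a time-subsequence $n_j \to \infty$ along which $S^{\pm}_{n_j,\varepsilon}(x)/n_j \to 1$. Combined with the trivial upper bound $S^{\pm}_{n,\varepsilon}/n \leq 1$, this gives both limsup statements, and the ``in particular'' statement follows since the two subsequences need not coincide.

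\textbf{The $\mathcal{A}^+$ case.} Suppose $x \in \mathcal{A}^+$ and extract $k_j \to \infty$ with $\tau^+\circ G^{k_j}(x)/\tau_{k_j}(x) \to \infty$. Put $n_j := \tau_{k_j}(x) + \tau^+\circ G^{k_j}(x)$, the instant at which the $I^+$ portion of the $(k_j+1)$-th excursion has just completed. Writing $i := \tau^+\circ G^{k_j}(x)$, the iterates indexed $\tau_{k_j}(x)+1,\ldots,n_j$ traverse $\Delta_{i-1}^+, \Delta_{i-2}^+,\ldots,\Delta_0^+$ in order, so by the definition of $N^+_\varepsilon$ and the one-excursion bound already used in the proof of Proposition~\ref{lem:S-vs-tau-k}, all but at most $N^+_\varepsilon + 1$ of them lie in $U^+_\varepsilon$. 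Therefore
\[
\frac{S^+_{n_j,\varepsilon}(x)}{n_j} \;\geq\; \frac{\tau^+\circ G^{k_j}(x) - N^+_\varepsilon - 1}{\tau_{k_j}(x) + \tau^+\circ G^{k_j}(x)},
\]
and dividing numerator and denominator by $\tau^+\circ G^{k_j}(x)$, the right-hand side tends to $1$ by the defining property of $\mathcal{A}^+$.

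\textbf{The $\mathcal{A}^-$ case.} The same recipe applies, but here the real obstacle is the ordering of iterates within an excursion: starting in $\Delta_0 = \Delta_0^-$, each excursion first spends $\tau^+$ iterates in $I^+$ and only then $\tau^-$ iterates in $I^-$. Taking $n_j := \tau_{k_j+1}(x) = \tau_{k_j}(x) + \tau^+\circ G^{k_j}(x) + \tau^-\circ G^{k_j}(x)$ and arguing as above yields
\[
\frac{S^-_{n_j,\varepsilon}(x)}{n_j} \;\geq\; \frac{\tau^-\circ G^{k_j}(x) - N^-_\varepsilon - 1}{\tau_{k_j}(x) + \tau^+\circ G^{k_j}(x) + \tau^-\circ G^{k_j}(x)}.
\]
Since $\tau_{k_j}(x)/\tau^-\circ G^{k_j}(x) \to 0$ by hypothesis, the only point left is to argue that $\tau^+\circ G^{k_j}(x)/\tau^-\circ G^{k_j}(x) \to 0$ along some sub-subsequence, and this is where the main difficulty sits. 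I would handle it by passing to the \emph{symmetric} first-return induced map $G^+ : \Delta_0^+ \to \Delta_0^+$ constructed identically in Section~\ref{sec:recall}, whose excursions have the $I^-$ portion \emph{first}; concretely, the orbit piece $z_0 := g^{\tau^+(x)}(x) \in \Delta_0^+$ traces out a full-measure $G^+$-orbit and the hypothesis $x \in \mathcal{A}^-$ transfers to the excursion statistics for $G^+$, reducing the $\mathcal{A}^-$ claim to the already-established $\mathcal{A}^+$-type argument with the roles of $+$ and $-$ swapped. Alternatively one can invoke the refined dominance information supplied by Proposition~\ref{prop:tau-p-over-tau-k1}, whose purpose is precisely to guarantee that when $\tau^-\circ G^{k_j}$ is exceptionally large, the preceding $\tau^+\circ G^{k_j}$ is negligible in comparison.

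The main obstacle is therefore not the construction of $n_j$ but the order-of-iterates bookkeeping in the $\mathcal{A}^-$ case; once this is handled either by the symmetric-induced-map reduction or by the forthcoming Proposition~\ref{prop:tau-p-over-tau-k1}, the ``in particular'' statement for $x \in \mathcal{A}^+\cap\mathcal{A}^-$ is immediate, giving~\eqref{eq:tilde-tau-limsup-liminf>1}.
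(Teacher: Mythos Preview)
Your treatment of the $\mathcal{A}^+$ case is correct and is essentially the paper's argument, carried out with more care: the paper writes an equality $S^+_{n_k,\eps}/n_k = (\tau_k^+ + \tau^+\!\circ G^k)/(\tau_k + \tau^+\!\circ G^k)$ that should really be an inequality up to the $N^+_\eps$ correction you supply, and then lets the second term of its decomposition tend to~$1$.

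For the $\mathcal{A}^-$ case you have correctly identified a genuine subtlety that the paper's one-word ``Similarly'' glosses over. In fact the pointwise implication, with $\mathcal{A}^-$ defined via $G=G^-$ as in the paper, is \emph{false}: by the full-branch property one can realise a point $x$ with $\tau^+\!\circ G^k(x)\asymp\tau_k(x)^2$ and $\tau^-\!\circ G^k(x)\asymp\tau_k(x)^{3/2}$ for every $k$; then $\tau^-\!\circ G^k/\tau_k\to\infty$ so $x\in\mathcal{A}^-$, yet $S^-_{n,\eps}(x)/n\to 0$ because the dominant $I^+$ block always precedes the $I^-$ block within each excursion.

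Your proposed repairs, however, need adjustment. For the symmetric-induced-map route, the transfer you describe does not go through: writing $z_0=g^{\tau^+(x)}(x)\in\Delta_0^+$, the $G^+$-return time after $k$ excursions is $\tau_k(x)+\tau^+\!\circ G^k(x)-\tau^+(x)$, so the $G^+$-analogue of the $\mathcal{A}^+$-condition at $z_0$ is \emph{strictly stronger} than $x\in\mathcal{A}^-$ (the problematic $\tau^+\!\circ G^k$ has moved into the denominator). What actually works, and is presumably what the paper intends by ``similarly'' and ``identical arguments'' throughout Section~5, is to run the entire chain for the minus side directly with $G^+$: define the analogue of $\mathcal{A}^+$ for $G^+$ with the roles of $\pm$ swapped, for which both the lemma and the Borel--Cantelli argument of Proposition~\ref{prop:tau-p-over-tau-k1} go through verbatim, and deduce $\limsup_n S^-_{n,\eps}/n=1$ for Lebesgue-a.e.\ point. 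Your alternative via Proposition~\ref{prop:tau-p-over-tau-k1} does not help either, since the sets $A_k$ there control $\tau_k$ but say nothing about $\tau^+\!\circ G^k$.
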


\begin{proof}
 Fixing 
\(\eps > 0\) and choosing the subsequence  \(n_k \coloneqq \tau_k + \tau^+\circ G^k \) we have 
\[
  \frac{ S_{n_k, \eps}^+ }{ n_k } =
  \frac{ \tau^+_k + \tau^+ \circ G^k  }{ \tau_k + \tau^+ \circ G^k }
  = \frac{ \tau^+_k }{ \tau_k^+ } \frac{ 1 } { 1 + \tau^+ \circ G^k / \tau_k } + \frac{ 1 }{ 1 + \tau_k / \tau^+\circ G^k }
\]
and so \( x\in \mathcal A^{+}\) implies  \(\limsup_{ n \to \infty} S_{n,\eps}^+ / n = 1\). 
Similarly, we get the statement for points in \( \mathcal A^-\).
\end{proof}

It follows from Lemma \ref{lem:a+limsup} that Proposition \ref{prop:limsupliminf} follows if we prove that 
\begin{equation} \label{prop:tau-p-over-tau-k}
 Leb (\mathcal{A}^{+}) = Leb (\mathcal{A}^{-})=1. 
 \end{equation}
Notice moreover that since the invariant measure \( \hat\mu\) is equivalent to Lebesgue on \( \Delta_{0}\) it is enough to show that \( \hat\mu(\mathcal A^{+}) = \hat\mu (\mathcal{A}^{-})=1 \). Moreover, both sets \( \mathcal A^{+}\) and \( \mathcal A^{-}\) are \emph{invariant} for the map \( G \), i.e. \( G^{-1}(\mathcal A^{\pm}) = \mathcal A^{\pm} \), and therefore, by ergodicity of \( \hat\mu \) it is sufficient to show that 
\begin{equation} \label{prop:tau-p-over-tau-kpositive}
\hat\mu (\mathcal{A}^{+}) > 0 \quand \hat\mu (\mathcal{A}^{-})>0
 \end{equation}
 since this implies  \eqref{prop:tau-p-over-tau-k}.  To prove \eqref{prop:tau-p-over-tau-kpositive} it is sufficient to show that there exist  subsets \( \widetilde{\mathcal A}^\pm\subseteq \mathcal{A}^{\pm}\)  with \( \hat\mu  (\widetilde{\mathcal{A}}^{\pm})>0 \). For simplicity we will just define the prove the result for \({\mathcal{A}}^{+}\) and \( \widetilde{\mathcal{A}}^{+}\), which therefore for convenience will will just denote by \(\widetilde{\mathcal{A}}\), as the  result for \({\mathcal{A}}^{-}\) and \( \widetilde{\mathcal{A}}^{-}\) follows by identical arguments. To define \( \widetilde{\mathcal A} \) we first need to make use of the fact that return time \( \tau \) satisfies a  stable law.

\begin{lem}
  \label{lem:stable-law}
  There is  a non-degenerate stable random variable \( Y \) such that for almost every \( t \in \mathbb{R} \)
  \[
    \lim_{ t\to \infty } \mu \left(
      \frac{ \tau_k } { k^\beta } - d_k \leq t 
    \right)
    = \mu ( Y \leq t ),
  \]
  where \( d_k = 0 \) if \( \beta > 1 \) and \( d_k \approx \log k \) if \( \beta = 1 \).  In particular, there is  \( T > 0 \)  such that \( \mu(  Y < T ) > 0 \).
\end{lem}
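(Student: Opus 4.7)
The plan is to apply a classical stable limit theorem for Birkhoff sums over Gibbs--Markov maps to the observable $\tau$ under the induced map $G$, and then extract the positivity statement as an immediate consequence of non-degeneracy of the resulting stable law. First I would invoke the structure recalled in Section~\ref{sec:recall}: the map $(G,\hat\mu)$ is a full branch Gibbs--Markov system with respect to the countable partition $\{\delta_{i,j}\}$, and the observable $\tau$ is \emph{constant} on each atom with $\tau|_{\delta_{i,j}}\equiv i+j$, so in particular it is trivially locally H\"older on the associated symbolic space. Since $g\in\mathfrak{F}_*$ we have $\beta^-=\beta^+=\beta\geq 1$, and Proposition~\ref{prop:tail-of-tau} gives the regularly varying tail
\[
\hat\mu(\tau>t)\sim (C_++C_-)\,t^{-1/\beta}
\]
with tail index $\alpha:=1/\beta\in(0,1]$.

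With these two ingredients the standard stable limit theorems of Aaronson--Denker, and the refinements of Gou\"ezel, for Gibbs--Markov systems apply directly. When $\beta>1$ (equivalently $\alpha\in(0,1)$) no centering is needed and one obtains $\tau_k/k^\beta \dto{k} Y$ with $Y$ a non-degenerate one-sided $\alpha$-stable law, which is the case $d_k=0$ of the statement. For the boundary case $\beta=1$ (equivalently $\alpha=1$) a logarithmic centering $d_k\approx\log k$ is required and one obtains a non-degenerate Cauchy-type limit. In both cases the non-degeneracy of $Y$ is automatic from the fact that the tail coefficient $C_++C_-$ is strictly positive.

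The final assertion that $\mu(Y<T)>0$ for some $T>0$ is then immediate: every non-degenerate stable distribution has a continuous CDF with $\mu(Y<T)\to 1$ as $T\to+\infty$, so any sufficiently large $T$ works. The main --- and rather mild --- obstacle is to match the precise hypotheses of whichever stable limit theorem one cites. However these reduce to (i) the Gibbs--Markov property of $(G,\hat\mu)$, established in \cite{CoaLuzMub22}, (ii) regular variation of the tails of $\tau$, from Proposition~\ref{prop:tail-of-tau}, and (iii) local constancy of $\tau$ on the atoms of the Markov partition, all of which are immediately available. So the real content of the lemma is the tail estimate already recalled from \cite{CoaLuzMub22}, after which the proof is essentially a citation.
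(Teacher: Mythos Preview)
Your proposal is correct and follows essentially the same route as the paper: invoke the Gibbs--Markov structure of $(G,\hat\mu)$, the local constancy of $\tau$ on the atoms $\delta_{i,j}$, and the regularly varying tail from Proposition~\ref{prop:tail-of-tau}, then apply the Aaronson--Denker/Gou\"ezel stable limit theorems, with the final positivity statement immediate from non-degeneracy. The only minor refinement in the paper is that it pins down the form of the centering $d_k\approx\log k$ in the boundary case $\beta=1$ by first passing through the classical i.i.d.\ stable limit theorem and then transferring to the dependent setting via \cite{Gou2010}*{Theorem~1.5}, rather than reading it off the Gibbs--Markov theorem directly.
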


We  note that to  simplify the notation we use the symbol \( \approx\) to denote the fact that the ratio between quantities is uniformly bounded above and below by a constant independent   of \( k \).

\begin{proof}
  The fact that \( \tau \) satisfies stable law can be see from \cite[Theorem 6.1]{AarDen2001}, however the formulation given there it is not immediate that the sequence \( d_k  \) should grow like \( \log k \) when \( \beta = 1 \). So, instead we will argue by \cite[Theorem 1.5]{Gou2010}.
  Let \( X_0, X_1, \ldots  \) be a sequence of positive, independent and identically distributed random variables with the same distribution as \( \tau \): \( \mathbb{P} ( X_0 \leq t ) = \mu ( \tau \leq t ) \).
  From Proposition \ref{prop:tail-of-tau} we know that \( t^{1/\beta} \mathbb{P} ( X_0 \leq t ) \to C > 0 \) and so the classical probability literature (see for example \cite{Nol20})  informs us that there exists a sequence \( d_k \) such that  for almost every \( t \in \mathbb{R} \) we have 
    \(
    \mathbb{P} ( { \sum_{ j = 0 }^{ k - 1 } X_j }/{ k^{\beta} } - d_k \leq t )
    = \mathbb{P} ( Y \leq t ),
  \)
  where \( Y \) is a non-degenerate stable random variable. Moreover, we know from \cite{Nol20} that \( d_k= 0  \) if \( \beta > 1 \) and \( d_k \approx \log k \) if \( \beta = 1 \).
  As \( \tau \) is constant on the partition elements \( \delta_{i,j} \), and as \( F \) is a topologically mixing Gibbs-Markov map, we know from \cite[Theorem 1.5]{Gou2010} that \( \tau_k \) will satisfy the same distributional convergence as \( \sum_{ j = 0}^{ k - 1} X_j \).
Finally, as \( Y \) is non-degenerate we know that there must exists some \( T > 0  \) such that \( \mu ( Y < T ) > 0 \), 
  which yields the Lemma.
\end{proof}

Now let \( d_k \)  and \( T > 0 \) be as in Lemma \ref{lem:stable-law} and let \(a_k\) be a sequence such that 
\begin{equation}\label{eq:annonsum}
  a_k \to +\infty\
  \quand 
  \sum_{k = 1}^\infty \frac{1}{ a_{k}^{1/\beta} k ( T +  d_k ) } = + \infty
\end{equation}
(as \( d_k \lesssim \log (k ) \), regardless the value of \( \beta \), we could take for example \( a_{k}^{1/\beta} = \log\log k\)), and let 
\[
  A_{k}\coloneqq  
  \{ \tau_k < k^{\beta} (T + d_k ) \text{ and }   \tau^{+}\circ G^{k} > a_k k^{\beta} ( T+ d_k)
  \}
\quad \text{ and let } \quad 
  \widetilde{\mathcal A}\coloneqq
  \bigcap_{ n  = 1 }^{ \infty } \bigcup_{ k \geq n } A_k
\]
be the set of points which belong to infinitely many \( A_{k}\). We will prove the following 
\begin{prop}\label{prop:tau-p-over-tau-k1}
  \( \widetilde{\mathcal A} \subseteq \mathcal A^{+}\) and 
  \( \hat\mu(\widetilde{\mathcal A})>0\). 
\end{prop}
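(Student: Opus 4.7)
The plan is to prove the two parts of Proposition \ref{prop:tau-p-over-tau-k1} separately, with the inclusion \( \widetilde{\mathcal{A}} \subseteq \mathcal{A}^{+} \) being immediate and the positivity of measure requiring a quantitative second Borel--Cantelli argument. For the inclusion: if \( x \in A_k \), then by definition \( \tau^+ \circ G^k(x)/\tau_k(x) > a_k \), so if \( x \) lies in \( A_k \) for infinitely many \( k \) and \( a_k \to \infty \) (which holds by \eqref{eq:annonsum}), then \( \limsup_k \tau^+ \circ G^k(x)/\tau_k(x) = +\infty \), i.e.\ \( x \in \mathcal{A}^+ \).

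For the positivity \( \hat\mu(\widetilde{\mathcal{A}}) > 0 \), I would first decompose \( A_k = B_k \cap G^{-k}(E_k) \), where \( B_k \coloneqq \{\tau_k < k^\beta(T+d_k)\} \) and \( E_k \coloneqq \{\tau^+ > a_k k^\beta(T+d_k)\} \). The crucial point is that, since \( \tau \) is constant on every partition element \( \delta_{i,j} \), the set \( B_k \) is a union of \( k \)-cylinders for the Gibbs--Markov partition \( \mathcal{P} = \{\delta_{i,j}\} \), hence lies in \( \mathcal{F}_k \coloneqq \bigvee_{\ell=0}^{k-1} G^{-\ell}\mathcal{P} \). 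The Gibbs--Markov property of \( G \) (uniform bounded distortion of iterated branches) then yields constants \( c_1, c_2 > 0 \) such that for every \( B \in \mathcal{F}_k \) and every measurable \( C \),
\[
  c_1\, \hat\mu(B)\, \hat\mu(C) \;\leq\; \hat\mu(B \cap G^{-k}C) \;\leq\; c_2\, \hat\mu(B)\, \hat\mu(C).
\]
Applying the lower inequality with \( B = B_k \), \( C = E_k \), and combining Lemma \ref{lem:stable-law} (which gives \( \hat\mu(B_k) \to \hat\mu(Y \leq T) > 0 \)) with Proposition \ref{prop:tail-of-tau} and the defining assumption \( \beta^+ = \beta \), one obtains
\[
  \hat\mu(A_k) \;\gtrsim\; \hat\mu(E_k) \;\sim\; \frac{C_+}{a_k^{1/\beta}\, k\, (T+d_k)^{1/\beta}},
\]
so that \( \sum_k \hat\mu(A_k) = +\infty \) by \eqref{eq:annonsum}.

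The main obstacle is controlling the pairwise intersections \( \hat\mu(A_j \cap A_k) \) for \( j < k \) in order to invoke the Kochen--Stone inequality. Here the same two-sided Gibbs--Markov estimate does the work: since \( A_j \in \mathcal{F}_{j+1} \subseteq \mathcal{F}_k \), the set \( A_j \cap B_k \) also lies in \( \mathcal{F}_k \), and the upper bound gives
\[
  \hat\mu(A_j \cap A_k) \;=\; \hat\mu\bigl((A_j \cap B_k) \cap G^{-k} E_k\bigr) \;\leq\; c_2\, \hat\mu(A_j \cap B_k)\, \hat\mu(E_k) \;\leq\; c_2\, \hat\mu(A_j)\, \hat\mu(E_k).
\]
Combined with the lower bound \( \hat\mu(A_k) \gtrsim \hat\mu(E_k) \) obtained above, this produces a uniform constant \( C \) with \( \hat\mu(A_j \cap A_k) \leq C\, \hat\mu(A_j)\, \hat\mu(A_k) \) for all \( j < k \), whereupon Kochen--Stone yields \( \hat\mu(\widetilde{\mathcal{A}}) \geq 1/C > 0 \). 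The delicate point to verify carefully will be that the Gibbs--Markov distortion constants are uniform in \( k \) and that \( B_k \) is genuinely a union of \( k \)-cylinders, but both should follow directly from the Gibbs--Markov structure for \( G \) established in \cite{CoaLuzMub22}. As a by-product, since \( \mathcal{A}^+ \) is \( G \)-invariant modulo null sets (the correction \( \tau_{k+1}(x) - \tau(x) \) is asymptotic to \( \tau_{k+1}(x) \) because \( \tau_k \to \infty \)), ergodicity of \( \hat\mu \) upgrades this to \( \hat\mu(\mathcal{A}^+) = 1 \).
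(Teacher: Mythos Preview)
Your proof is correct and follows essentially the same route as the paper: decompose \(A_k = B_k \cap G^{-k}C_k\), use the Gibbs--Markov bounded distortion estimate together with Lemma~\ref{lem:stable-law} to get \(\hat\mu(A_k)\approx\hat\mu(C_k)\) and hence \(\sum\hat\mu(A_k)=+\infty\), and then apply a R\'enyi/Kochen--Stone Borel--Cantelli argument. The only cosmetic difference is in the pairwise-intersection step: the paper first drops both \(B\)-factors, writing \(\hat\mu(A_n\cap A_k)\le\hat\mu(C_n\cap G^{-m}C_k)\) and applying distortion there, whereas you keep \(A_j\) intact and exploit \(A_j\cap B_k\in\mathcal F_k\); both yield the same bound \(\hat\mu(A_j\cap A_k)\lesssim\hat\mu(A_j)\hat\mu(A_k)\).
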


Proposition \ref{prop:tau-p-over-tau-k1} clearly implies 
\( \hat\mu(\mathcal A^{+}) >0\), and an identical argument gives  \( \hat\mu(\mathcal A^{-}) >0\), and therefore, via 
\eqref{prop:tau-p-over-tau-kpositive} and \eqref{prop:tau-p-over-tau-k}, implies Proposition \ref{prop:limsupliminf}. In the next section we prove Proposition~\ref{prop:tau-p-over-tau-k1}.

\subsection{Proof of Proposition \ref{prop:tau-p-over-tau-k1}}
\label{sec:proof}
We will first show the first part of the statement, that  \( \widetilde{\mathcal A} \subseteq \mathcal A^{+}\), which is essentially a direct consequence of the definitions.

\begin{lem}
 \( \widetilde{\mathcal A} \subseteq \mathcal A^{+}\) 
\end{lem}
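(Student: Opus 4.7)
The plan is to unwind the definitions: the inclusion $\widetilde{\mathcal A} \subseteq \mathcal A^{+}$ is essentially tautological given the careful choice of the sets $A_k$.

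First I would let $x \in \widetilde{\mathcal A}$. By definition of $\widetilde{\mathcal A}$ as the set of points belonging to $A_k$ for infinitely many $k$, there is a subsequence $k_j \to \infty$ such that $x \in A_{k_j}$ for every $j$. For each such $k_j$, both defining inequalities of $A_{k_j}$ hold simultaneously at $x$, namely
\[
\tau_{k_j}(x) < k_j^{\beta}(T + d_{k_j})
\quad\text{and}\quad
\tau^{+}\circ G^{k_j}(x) > a_{k_j}\, k_j^{\beta}(T + d_{k_j}).
\]
Dividing the second by the first (both sides are positive) gives
\[
\frac{\tau^{+}\circ G^{k_j}(x)}{\tau_{k_j}(x)} > \frac{a_{k_j}\, k_j^{\beta}(T + d_{k_j})}{k_j^{\beta}(T + d_{k_j})} = a_{k_j}.
\]

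Since $a_{k} \to +\infty$ by \eqref{eq:annonsum}, the right-hand side tends to $+\infty$ along the subsequence $k_j$, so
\[
\limsup_{k\to\infty} \frac{\tau^{+}\circ G^{k}(x)}{\tau_{k}(x)} = +\infty,
\]
which is exactly the condition for $x \in \mathcal A^{+}$. There is no real obstacle: the entire content of the statement has been front-loaded into the definitions of $A_k$ and $\widetilde{\mathcal A}$ and into the growth condition on $a_k$. The genuine work lies in the complementary statement $\hat\mu(\widetilde{\mathcal A}) > 0$, which will presumably require a Borel--Cantelli-type argument using the summability/non-summability of $\hat\mu(A_k)$ together with the Gibbs--Markov mixing properties of $G$.
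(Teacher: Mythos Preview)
Your proof is correct and follows essentially the same route as the paper's own argument: both use the two defining inequalities of $A_k$ to bound the ratio $\tau^{+}\circ G^{k}(x)/\tau_k(x)$ by $a_k$ (the paper bounds the reciprocal by $1/a_k$), and then invoke $a_k\to\infty$ along the infinitely many $k$ for which $x\in A_k$.
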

\begin{proof}
If \(x \in  A_k\) then, by definition, as \( k \to \infty\) we have 
\[
  \frac{ \tau_k } { \tau^+ \circ G^k } < \frac{  k^{\beta} (T + d_k ) }{ a_k k^{\beta} ( T  + d_k)}
  =   \frac{1}{ a_k} \to 0.
\]
So,  when \( k \) is sufficiently large, the ratio \(   \tau_k/ \tau^+ \circ G^k  \) is arbitrarily small and  \( \tau^+ \circ G^k/ \tau_k \) is arbitrarily large. 
It follows that 
for \(x \in \widetilde{\mathcal A}\) we have   \(\limsup_{ k\to \infty } \tau^+ \circ G^k ( x ) / \tau_k ( x )= \infty\) and so  \( \widetilde{\mathcal A}\subseteq \mathcal A\).  
\end{proof}

To show that \( \hat\mu(\widetilde{\mathcal A})>0\), and thus complete the proof of Proposition  \ref{prop:tau-p-over-tau-k1}, 
we will use a version of the second Borel-Cantelli lemma, originally due to R\'enyi, which says~that
  \begin{equation}\label{eq:borel}
    \hat \mu ( A_k \cap A_n ) \lesssim  \hat\mu (A_k) \hat\mu(A_n) \quand  
    \sum_{ k = 1 }^{ \infty } \hat \mu ( A_k ) = + \infty
    \quad 
\Longrightarrow 
\quad 
\hat\mu(\widetilde{\mathcal A}) = 
\hat\mu \left( \bigcap_{ n = 1 }^{ \infty } \bigcup_{ k \geq n} A_k \right) > 0.
  \end{equation}
A more precise statement,  see \cite[Theorem 2.1]{Pet2002},  includes a an explicit lower bound on the measure of \( \widetilde{\mathcal A}\) but we will not need that here. To verify the assumptions of \eqref{eq:borel}, we  observe first of all that the sets \( A_{k}\) can be written as the intersection  
\[
  A_k = B_k \cap G^{-k} C_k 
\]
where 
\[
   B_k \coloneqq      
  \left\{  \tau_k <  k^{\beta} ( T   + d_k ) \right\}
\quand 
    C_k \coloneqq \left\{ \tau^+ > a_k k^\beta  ( T  + d_k ) \right\}.
\]
Notice that   \( B_{k}\) is a union of elements of the partition \( \mathcal P^{(k)}\) (on which \( \tau_{k}\) is constant) whereas \( \mathcal C_{k}\) is a union of elements of the partition \( \mathcal P \) (on which \( \tau^{+}\) is constant), and therefore \( G^{-k}\mathcal C_{k} \) is a union of elements of the partition \( \mathcal P^{(k+1)}\). 
In particular, \( A_{k}\) is a union of elements of \( \mathcal P^{(k+1)}\).

Our proof will use in a crucial way the standard bounded distortion property of Gibbs-Markov maps which implies that for any measurable set \( \tilde\omega\subset \Delta_{0}\), any \( k\geq 1\), and any \( \omega^{(k)}\in \mathcal P^{(k)}\) we have 
\begin{equation}\label{eq:dist}
\hat\mu (\omega^{(k)} \cap \tilde\omega) \approx  \hat\mu( \omega^{(k)}) \hat\mu(G^{k}(\tilde \omega)). \end{equation}
We recall that  we use the symbol \( \approx\) to denote that the ratio between two quantities is uniformly bounded above and below by a constant independent   of \( k \),  \( \omega^{(k)}\) or \( \tilde\omega\). 
The standard formulation of bounded distortion is 
\(
{\hat\mu ( \omega^{(k)} \cap \tilde\omega)}/{\hat\mu( \omega^{(k)}) } 
\approx 
{\hat\mu(G^{k}(\tilde\omega))}/{\hat\mu(G^{k}(\omega^{(k)}))}, 
\) see Corollary 3.11 in \cite{CoaLuzMub22},  but since \( \omega^{(k)}\in \mathcal P^{(k)}\), we have \( G^{k}(\omega^{(k)}) = \Delta_{0}\),  and since \( \hat\mu(\Delta_{0}) =1 \) this implies \eqref{eq:dist}. 
We will also need the following result which is where we use the convergence to a stable law, as  stated  in Lemma~\ref{lem:stable-law}. 

\begin{lem}\label{lem:uniformbound}
The measure \( \hat \mu (B_k )  \) is uniformly bounded below. 
\end{lem}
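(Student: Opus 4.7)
The plan is to extract the uniform lower bound directly from the stable law provided by Lemma \ref{lem:stable-law}. First I would rewrite
\[
  \hat\mu( B_k ) = \hat\mu\left( \frac{ \tau_k }{ k^\beta } - d_k < T \right),
\]
so that the quantity of interest is precisely the distribution function of the normalised return times evaluated at the fixed threshold $T$. The key point is that because $Y$ is a \emph{non-degenerate} stable random variable, its distribution function is continuous on all of $\mathbb{R}$; consequently the ``almost every $t$'' clause in Lemma \ref{lem:stable-law} can be applied at the particular value $t = T$, giving
\[
  \hat\mu( B_k ) \longrightarrow \mu( Y < T ) > 0
  \qquad \text{as } k \to \infty.
\]
In particular there is some $K_0 \geq 1$ so that $\hat\mu(B_k) \geq \tfrac{1}{2}\mu( Y < T )$ for every $k \geq K_0$.

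It then remains to handle the finitely many indices $k < K_0$. For these I would argue directly that $\hat\mu( B_k ) > 0 $. Enlarging $T$ at the outset if necessary (harmless since $\mu( Y < T )$ remains positive as $T$ grows), the cylinder $\bigcap_{\ell = 0}^{ k - 1 } G^{-\ell}(\delta_{1,1}) \in \mathcal{P}^{(k)}$ has $\tau_k \equiv 2k$ on it, which will satisfy $2k < k^\beta ( T + d_k )$ for $T$ large enough; hence this cylinder is contained in $B_k$. The bounded-distortion estimate \eqref{eq:dist} ensures it has strictly positive $\hat\mu$-measure. Taking the minimum over the finite collection $k < K_0$ produces a strictly positive lower bound, which combined with the asymptotic estimate above yields a single uniform lower bound valid for all $k \geq 1$.

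The only technical subtlety is the upgrade from ``almost every $t$'' in Lemma \ref{lem:stable-law} to convergence at the specific point $t = T$, but this is immediate from the non-degeneracy (hence continuity) of the limiting stable distribution. Beyond that the argument is a direct application of convergence in distribution, with the Gibbs--Markov structure only entering to guarantee positivity of cylinder measures for the initial $k$.
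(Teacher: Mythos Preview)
Your argument is correct and follows the same route as the paper: rewrite \( B_k = \{ \tau_k/k^\beta - d_k < T \} \) and invoke the stable-law convergence from Lemma~\ref{lem:stable-law} to obtain \( \hat\mu(B_k) \to \mu(Y < T) > 0 \). The paper's proof stops there, whereas you additionally (and correctly) justify why the ``almost every \( t \)'' clause applies at \( t = T \) via continuity of non-degenerate stable laws, and handle the finitely many small \( k \) by exhibiting an explicit cylinder in \( B_k \); these refinements are sound but the core approach is identical.
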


\begin{proof}
Notice first of all that  can re-write the condition defining the sets \( B_{k}\) as 
\[
B_{k} =  \left\{ \frac{ \tau_k }{ k^{\beta} } - d_k <  T \right\}.
\]
By Lemma \ref{lem:stable-law}, 
we have  \(\hat \mu ( B_k ) \to \hat \mu (  Y <  T  )\) and so our choice of \( T \) implies the statement. 
\end{proof}

We can now verify the conditions of the Borel-Cantelli lemma in \eqref{eq:borel} in the following two lemmas, thus completing the proof of Proposition \ref{prop:tau-p-over-tau-k1}, and therefore of  \eqref{prop:tau-p-over-tau-k} and  \eqref{eq:tilde-tau-limsup-liminf>1}.

\begin{lem} For any \(k\geq 1 \), 
\begin{equation}
  \label{eq:AnCn}
  \hat\mu (A_k) \approx  \hat \mu ( C_k )
 \quad\text{ and therefore } \quad \sum_{k=1}^{\infty} \hat \mu (A_{k})  =  +\infty.
\end{equation}
\end{lem}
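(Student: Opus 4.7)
My plan is to establish the two equivalent claims in sequence: first $\hat\mu(A_k) \approx \hat\mu(C_k)$, which uses the Gibbs-Markov structure of $G$; then divergence of $\sum_k \hat\mu(C_k)$, which uses the tail estimate of $\tau^+$ together with the summability condition imposed on $(a_k)$.

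\smallskip

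\textbf{Step 1 (reduction via bounded distortion).} Recall that $B_k$ is a union of elements of $\mathcal{P}^{(k)}$, since $\tau_k$ is constant on each such element. For any $\omega^{(k)} \in \mathcal{P}^{(k)}$ with $\omega^{(k)} \subset B_k$, the map $G^k : \omega^{(k)} \to \Delta_0$ is a bijection, so $G^k(\omega^{(k)} \cap G^{-k} C_k) = C_k$. Applying the bounded-distortion estimate \eqref{eq:dist} with $\tilde\omega = G^{-k} C_k$, I get
\[
  \hat\mu(\omega^{(k)} \cap G^{-k} C_k) \approx \hat\mu(\omega^{(k)})\, \hat\mu(C_k).
\]
Summing this relation over all $\omega^{(k)} \subset B_k$ gives
\[
  \hat\mu(A_k) = \sum_{\omega^{(k)} \subset B_k} \hat\mu(\omega^{(k)} \cap G^{-k} C_k) \approx \hat\mu(B_k)\, \hat\mu(C_k).
\]
Since Lemma \ref{lem:uniformbound} gives a uniform lower bound on $\hat\mu(B_k)$ (and an obvious upper bound $\hat\mu(B_k) \leq 1$), I conclude $\hat\mu(A_k) \approx \hat\mu(C_k)$.

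\smallskip

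\textbf{Step 2 (divergence of $\sum_k \hat\mu(C_k)$).} Here I use Proposition \ref{prop:tail-of-tau}: since $g \in \mathfrak{F}_*$ we have $\beta^+ = \beta$, so
\[
  \hat\mu(C_k) = \hat\mu\bigl(\tau^+ > a_k k^{\beta}(T + d_k)\bigr) \sim C_+ \bigl(a_k k^\beta (T + d_k)\bigr)^{-1/\beta} = \frac{C_+}{a_k^{1/\beta}\, k\, (T + d_k)^{1/\beta}}.
\]
Now I treat the two cases covered by Lemma \ref{lem:stable-law} separately: if $\beta > 1$ then $d_k = 0$ and $(T + d_k)^{1/\beta}$ is a positive constant, so $\hat\mu(C_k) \approx (a_k^{1/\beta} k)^{-1}$; if $\beta = 1$ then $(T + d_k)^{1/\beta} = T + d_k$ and $\hat\mu(C_k) \approx (a_k k (T + d_k))^{-1}$. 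In both cases the asymptotic of $\hat\mu(C_k)$ matches (up to multiplicative constants) the general term of the divergent series in \eqref{eq:annonsum}. Therefore $\sum_k \hat\mu(C_k) = +\infty$ and, by Step 1, $\sum_k \hat\mu(A_k) = +\infty$ as well.

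\smallskip

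I do not foresee a real obstacle: the main subtlety is only in applying bounded distortion correctly, i.e.\ making sure that $B_k$ is a $\mathcal{P}^{(k)}$-measurable set so that the cylinder-by-cylinder distortion estimate applies exactly, with $G^k$ mapping each cylinder full-branch onto $\Delta_0$. The arithmetic in Step 2 is routine once Proposition \ref{prop:tail-of-tau} is invoked with $\beta^+ = \beta$, and the $\beta = 1$ versus $\beta > 1$ dichotomy is handled uniformly by the way the sequence $(a_k)$ was chosen in \eqref{eq:annonsum}.
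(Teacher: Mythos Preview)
Your proof is correct and follows essentially the same route as the paper: first using bounded distortion \eqref{eq:dist} together with the $\mathcal{P}^{(k)}$-measurability of $B_k$ and Lemma~\ref{lem:uniformbound} to obtain $\hat\mu(A_k)\approx\hat\mu(B_k)\hat\mu(C_k)\approx\hat\mu(C_k)$, and then invoking Proposition~\ref{prop:tail-of-tau} with $\beta^+=\beta$ and the $\beta=1$ versus $\beta>1$ dichotomy to match $\hat\mu(C_k)$ to the divergent series in \eqref{eq:annonsum}. Your write-up is slightly more explicit about the case split on $\beta$, but the argument is the same.
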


\begin{proof}
It follows  from the definition of the set \( A_{k}\),  \eqref{eq:dist}, and Lemma \ref{lem:uniformbound}, that 
\begin{equation}
  \label{eq:AnBnCn}
  \hat\mu (A_k) = 
  \sum_{\substack{\omega^{(k)}\in \mathcal P^{(k)} \\ \omega^{(k)}\subset B_{k}}} 
 \hat\mu(\omega^{(k)} \cap G^{-k} C_{k} )
  \approx 
   \sum_{\substack{\omega^{(k)}\in \mathcal P^{(k)} \\ \omega^{(k)}\subset B_{k}}} 
 \hat\mu(\omega^{(k)}) \hat\mu (C_{k} )
  \approx
  \hat \mu (  B_k ) \hat \mu ( C_k ) \approx  \hat \mu ( C_k ), 
\end{equation}
which is the first statement in \eqref{eq:AnCn}. 
Now, from  \eqref{eq:dist-of-tau-p}, keeping in mind that  \( \beta^{+}=\beta^{-}=\beta \geq 1\),  we have  \( \hat \mu(\tau^{+}> t) \approx 1/t^{1/\beta}\) and therefore, as \( d_k \) is only non-zero when \( \beta = 1 \),
\(
  \hat\mu (A_k) \approx  \hat \mu ( C_k ) \approx 1/ (a_{k}^{1/\beta} k (T + d_k )^{1/\beta})
  \approx 1/ ( a_{k}^{1/\beta} k ( T + d_k ))
\) 
which, by \eqref{eq:annonsum}, implies \eqref{eq:AnCn},  completing the  proof. 
\end{proof}

\begin{lem} For every \( n,k \geq 1 \) with \( n \neq k \) we have 
\(
 \hat \mu ( A_n \cap A_k ) \lesssim \hat\mu (A_k) \hat\mu(A_n) 
 \)
\end{lem}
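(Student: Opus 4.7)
The plan is to exploit the ``tower'' structure: when $n>k$, the set $A_k$ is measurable with respect to the \emph{past} (a coarser partition than $\mathcal{P}^{(n)}$), whereas the cylinder $G^{-n}C_n$ in $A_n$ concerns the behaviour on the $(n{+}1)$-st step of the induced dynamics. The bounded distortion estimate \eqref{eq:dist} then decouples the two. Without loss of generality assume $n>k$, so $n\geq k+1$, since the inequality to be proved is symmetric in $n$ and $k$.

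First I would observe that $A_k=B_k\cap G^{-k}C_k$ is a union of elements of $\mathcal{P}^{(k+1)}$: indeed $\tau_k$ is constant on elements of $\mathcal{P}^{(k)}$, so $B_k\in\sigma(\mathcal{P}^{(k)})\subset\sigma(\mathcal{P}^{(k+1)})$, while $\tau^+\circ G^k$ is constant on elements of $\mathcal{P}^{(k+1)}$, so $G^{-k}C_k\in\sigma(\mathcal{P}^{(k+1)})$. Since $n\geq k+1$, the partition $\mathcal{P}^{(n)}$ refines $\mathcal{P}^{(k+1)}$, and $B_n\in\sigma(\mathcal{P}^{(n)})$. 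Hence $A_k\cap B_n$ is a union of elements of $\mathcal{P}^{(n)}$.

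Next I would decompose
\[
  \hat\mu(A_k\cap A_n)
  \;=\;\hat\mu\bigl(A_k\cap B_n\cap G^{-n}C_n\bigr)
  \;=\;\sum_{\substack{\omega^{(n)}\in\mathcal{P}^{(n)}\\ \omega^{(n)}\subset A_k\cap B_n}}\hat\mu\bigl(\omega^{(n)}\cap G^{-n}C_n\bigr)
\]
and apply \eqref{eq:dist} to each summand with $\tilde\omega=\omega^{(n)}\cap G^{-n}C_n$. Since $G^n$ maps $\omega^{(n)}$ bijectively onto $\Delta_0$, we have $G^n(\tilde\omega)=C_n$, so $\hat\mu(\omega^{(n)}\cap G^{-n}C_n)\approx\hat\mu(\omega^{(n)})\hat\mu(C_n)$. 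Summing and pulling $\hat\mu(C_n)$ out of the sum gives $\hat\mu(A_k\cap A_n)\approx\hat\mu(A_k\cap B_n)\,\hat\mu(C_n)$. The trivial bound $\hat\mu(A_k\cap B_n)\leq\hat\mu(A_k)$ together with the already-established estimate $\hat\mu(A_n)\approx\hat\mu(C_n)$ from \eqref{eq:AnCn} then yields
\[
  \hat\mu(A_k\cap A_n)\;\lesssim\;\hat\mu(A_k)\,\hat\mu(C_n)\;\approx\;\hat\mu(A_k)\,\hat\mu(A_n).
\]

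I do not expect any genuine obstacle here: the argument is essentially a one-line application of Gibbs--Markov bounded distortion once the partition levels are identified correctly. The only point to be careful about is the accounting of partition indices ($A_k\in\sigma(\mathcal{P}^{(k+1)})$, not merely $\sigma(\mathcal{P}^{(k)})$), since this is what lets the refinement $\mathcal{P}^{(n)}$ absorb $A_k\cap B_n$ when $n\geq k+1$, and the reduction of $\hat\mu(A_n)$ to $\hat\mu(C_n)$ was already done in the preceding lemma.
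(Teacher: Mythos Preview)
Your argument is correct and is a clean application of bounded distortion; it differs from the paper's proof in the way the intersection is unpacked. The paper first discards both $B$-sets via $A_n\cap A_k\subset G^{-n}C_n\cap G^{-k}C_k$, then uses the $G$-invariance of $\hat\mu$ to shift by the smaller index, reducing to $\hat\mu(C_n\cap G^{-m}C_k)$ with $m=k-n$, and only then applies \eqref{eq:dist} at level $m$. You instead keep $A_k$ intact, observe that $A_k\cap B_n$ is $\mathcal{P}^{(n)}$-measurable (the key partition-index bookkeeping you flag), and apply \eqref{eq:dist} directly at level $n$ to split off $G^{-n}C_n$. Your route avoids the invariance step and the inclusion $A_j\subset G^{-j}C_j$ on the $A_k$ side, at the cost of tracking the measurability level of $A_k$ more carefully; the paper's route is more symmetric and makes transparent that the $B$-sets are irrelevant to the quasi-independence. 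Both reach $\hat\mu(C_n)\hat\mu(C_k)$ up to constants and finish via \eqref{eq:AnCn}.
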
 
\begin{proof}
For every \( n \geq 1 \) we have 
\(  \hat\mu( A_n)  = \hat\mu(B_n \cap G^{-n} C_n) \leq \hat\mu( G^{-n} C_n)  \) and so, 
supposing without loss of generality that \( k > n \) and letting \( k= n+m\) and using also that  \( \hat \mu  \) is \( G \)-invariant, we get
\[
  \hat \mu (A_n \cap A_{n+m})
\leq \hat \mu ( G^{-n} C_n \cap G^{-k} C_{k} ) = 
 \hat \mu( G^{-n} ( C_n \cap G^{-m} C_{k} )) = \hat\mu(C_n \cap G^{-m} C_{k}).
\]
Since  \( C_{k}\) is a union of elements of  \( \mathcal P\) and \( G^{-m}C_{k}\) is a union of elements of   \( \mathcal P^{(m+1)}\), by  \eqref{eq:dist} we get 
\[
\hat\mu(C_n \cap G^{-m} C_{k} ) = 
\sum_{\substack{\omega^{(m)}\in \mathcal P^{(m)} \\ \omega^{(m)}\subset C_{n}}}
\hat\mu(\omega^{(m)} \cap G^{-m} C_{k} )
\approx 
\sum_{\substack{\omega^{(m)}\in \mathcal P^{(m)} \\ \omega^{(m)}\subset C_{n}}}
 \hat\mu (\omega^{(m)}) \hat\mu (C_{k})
 \leq  \hat\mu(C_{n}) \hat\mu (C_{k}).
\]
Combining these two expressions and using again the first part of \eqref{eq:AnCn} gives the statement. 
\end{proof}

\subsection{The space of accumulation points of \( \mu_{n}(x)\)}
\label{sec:accpoints}
In the previous subsection we have shown that \eqref{eq:tilde-tau-limsup-liminf>1} holds for Lebesgue almost every \( x \) and that therefore in particular the sequence \( \mu_{n}(x) \) does not converge. To complete the proof of Theorem \ref{thm:no-phys-measures} it therefore just remains to show that every measure \( \nu_{p}\), for every \( p\in [0,1]\), actually occurs as a limit point of the sequence and that these are the only measures which can occur as such limit points. This is a consequence of the following two lemmas. The first is a general result which applies whenever \( \beta\geq 1\) and which essentially  almost every orbit eventually spends most of it's time in arbitrarily small neighbourhoods of 1 and/or -1.

\begin{lem}
\label{lem:occ-tim-middle}
For almost every \(x\), and for every \(0 < \eps < 1\), 
\begin{equation}
  \label{eq:sum-to-1}
  \frac{ S_{n, \eps}^{+} (x) + S_{n, \eps}^{-} (x) } { n } \to 1.
\end{equation}
\end{lem}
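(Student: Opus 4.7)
The plan is to upgrade the subsequential convergence $S_{\tau_k,\eps}/\tau_k \to 1$ already established in Proposition~\ref{lem:S-vs-tau-k} to convergence along the full sequence $n$, using the monotonicity of $S^{\pm}_{n,\eps}$ in $n$ together with a simple sandwich between consecutive return times.

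I would first fix $x \in \Delta_0$ and $\eps > 0$, and for each $n$ let $k=k(n)$ be the unique integer with $\tau_k(x) \le n < \tau_{k+1}(x)$. The proof of Proposition~\ref{lem:S-vs-tau-k} already delivers the uniform bound
\[
\tau_m(x) - m(N^+_\eps + N^-_\eps + 2) \;\le\; S^{+}_{\tau_m,\eps}(x) + S^{-}_{\tau_m,\eps}(x) \;\le\; \tau_m(x)
\]
for every $m \ge 1$. Apply it with $m=k+1$: because $S^{\pm}_{\cdot,\eps}(x)$ is non-decreasing, the iterates ``missing'' from $S^{+}_{n,\eps}+S^{-}_{n,\eps}$ relative to $S^{+}_{\tau_{k+1},\eps}+S^{-}_{\tau_{k+1},\eps}$ are at most $\tau_{k+1}(x)-n$ in number, hence
\[
S^{+}_{n,\eps}(x) + S^{-}_{n,\eps}(x) \;\ge\; \tau_{k+1}(x) - (k+1)(N^+_\eps + N^-_\eps + 2) - (\tau_{k+1}(x)-n) \;=\; n - (k+1)(N^+_\eps + N^-_\eps + 2).
\]
Combining with the trivial upper bound $S^{+}_{n,\eps}+S^{-}_{n,\eps} \le n$ and dividing by $n \ge \tau_k(x)$ yields
\[
1 \;\ge\; \frac{S^{+}_{n,\eps}(x) + S^{-}_{n,\eps}(x)}{n} \;\ge\; 1 - \frac{(k+1)(N^+_\eps + N^-_\eps + 2)}{\tau_k(x)}.
\]
Since $g\in \mathfrak{F}_*$ forces $\beta \ge 1$, the divergence \eqref{eq:div} gives $\tau_k(x)/k \to \infty$ for $\hat\mu$-a.e.\ $x$, and hence for Lebesgue-a.e.\ $x \in \Delta_0$ because $\hat\mu$ is equivalent to Lebesgue on $\Delta_0$; the right-hand side tends to $1$ as $n\to\infty$, proving \eqref{eq:sum-to-1} on $\Delta_0$.

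To extend to Lebesgue-a.e.\ $x \in [-1,1]$, I would note that the sigma-finite ergodic invariant measure $\mu$ is equivalent to Lebesgue, so almost every orbit enters $\Delta_0$ after a finite number of steps $n_0(x)<\infty$; since $|S^{\pm}_{n,\eps}(x) - S^{\pm}_{n-n_0,\eps}(g^{n_0}x)| \le n_0(x)$ is bounded independently of $n$, the asymptotic ratio is unchanged. To handle every $\eps \in (0,1)$ on a single full-measure set, I would select a countable dense sequence $\eps_j \downarrow 0$, discard the corresponding null sets, and exploit the monotonicity $U^{\pm}_\eps \subseteq U^{\pm}_{\eps'}$ for $\eps < \eps'$ to sandwich general $\eps$ between two chosen values. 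There is no real obstacle here: the substantive estimate is inherited from Proposition~\ref{lem:S-vs-tau-k}, and the lemma reduces to an interpolation between consecutive return times of $G$.
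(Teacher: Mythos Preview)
Your proof is correct and follows essentially the same route as the paper: bound the number of iterates outside $U^+_\eps\cup U^-_\eps$ during each return by a constant depending only on $\eps$, sandwich $n$ between consecutive return times $\tau_k\le n<\tau_{k+1}$, and invoke $\tau_k/k\to\infty$ from \eqref{eq:div}. The paper phrases this by introducing the complement $K_\eps=[-1,1]\setminus(U^+_\eps\cup U^-_\eps)$ and bounding $K_{n,\eps}/n$ directly, whereas you recycle the inequality \eqref{eq:S-tau-k-vs-tau-k} from the proof of Proposition~\ref{lem:S-vs-tau-k}; your final remarks on extending to all of $[-1,1]$ and to every $\eps$ simultaneously are fine, though note that the exceptional null set (where $\tau_k/k\not\to\infty$) is already independent of $\eps$, so the countable-$\eps_j$ trick is not actually needed.
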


\begin{proof}
Let \(K_{\eps} = [-1,1] \setminus \left ( U_{\eps}^+ \cup U_{\eps}^- \right)\) and let 
\begin{equation}\label{eq:sum-to-1a}
  K_{n, \eps}(x) \coloneqq \sum_{ k = 0 }^{ n - 1 } \mathbb{1}_{K_{\eps}} \circ  g^k ( x ) = n- (S_{n, \eps}^{+} (x) + S_{n, \eps}^{-} (x) )
 \end{equation}
  be the number of times the orbit of \( x \) belongs to \( K_{\eps}\) before time \( n \). 
Notice that there exists  \(N_\eps\),  depending only on \( \eps\),  such that 
\(
K_{\eps} \subset \bigcup_{ j = 0 }^{ N_{\eps} } \Delta_j^\pm
\)
 and so   the orbit of \(x\) enters \(K_{\eps}\) at most \(2 N_{\eps}\) times between one return and the next and so for every \( k\geq 1 \) we have \(  {K_{\tau_{k, \eps}}} \leq 2N_\eps k \). Thus,  letting  \(k_n\) be the unique integer such that  \(\tau_{k_n} (x) \leq n < \tau_{k_n + 1} (x)\) we have 
\begin{equation}\label{eq:sum-to-1b}
  \frac{K_{n, \eps}}{n}
  \leq 
 \frac{{K_{\tau_{k_{n}+1}, \eps}}}{\tau_{k_n}} \leq 2N_\eps \frac{k_n+1}{\tau_{k_{n}}}
 \end{equation}
Since \( \beta\geq 1\) we have that \( \tau_k/k \to \infty\), recall \eqref{eq:div}, and therefore  \( k/\tau_k \to 0 \) and so \( ({k_n}+1)/{\tau_{k_{n}}}\to 0\). 
 By \eqref{eq:sum-to-1b} this implies \( {K_{n, \eps}}/{n}\to 0\) and therefore, substituting into \eqref{eq:sum-to-1a} proves the Lemma. 
\end{proof}

The next lemma  is  an intuitive but non-trivial link between the limit points of \( \mu_n(x)\) and~\( S_{n,\eps}^+(x) / n\).

\begin{lem}
\label{lem:conv-to-nu-p}
 For all \( p\in [0,1]\), \(\mu_{n_k} ( x ) \to \nu_p\) if and only if \(S_{n_k,\eps}^+(x) / n_k  \to p\) for  all small \(\eps > 0\).
\end{lem}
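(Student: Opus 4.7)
The plan is to translate the weak-star convergence $\mu_{n_k}(x) \to \nu_p$ into statements about the occupation frequencies $S_{n_k, \eps}^{\pm}(x)/n_k$, exploiting the fact that the limit class $\Omega$ consists only of convex combinations of the Dirac masses at $\pm 1$, so the only relevant information is how the empirical mass distributes between arbitrarily small neighbourhoods of $\pm 1$ and the complementary ``middle'' region. Lemma \ref{lem:occ-tim-middle} tells us that the middle collects vanishingly little mass, and this is the key input that will close both directions.

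For the forward direction, I would fix $\eps > 0$ small and choose auxiliary widths $0 < \eta < \eps < \eps'$. Then I would construct two continuous functions $\psi, \psi' : [-1,1] \to [0,1]$ satisfying
\[
\mathbb{1}_{[1-\eta, 1]} \leq \psi \leq \mathbb{1}_{[1-\eps, 1]} \quand \mathbb{1}_{[1-\eps, 1]} \leq \psi' \leq \mathbb{1}_{[1-\eps', 1]},
\]
each taking the value $1$ at $x=1$ and the value $0$ at $x=-1$. For Lebesgue-a.e.\ $x$ the orbit avoids the endpoints $\{1-\eta, 1-\eps, 1-\eps', 1\}$, so integrating these inequalities against the empirical measure $\mu_n(x)$ gives
\[
\frac{S_{n,\eta}^+(x)}{n} \leq \int \psi \, d\mu_n(x) \leq \frac{S_{n,\eps}^+(x)}{n} \leq \int \psi' \, d\mu_n(x) \leq \frac{S_{n,\eps'}^+(x)}{n}.
\]
Since $\int \psi \, d\nu_p = \int \psi' \, d\nu_p = p$, weak convergence $\mu_{n_k}(x) \to \nu_p$ sends the two middle integrals to $p$, forcing $S_{n_k,\eps}^+(x)/n_k \to p$ as required.

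For the reverse direction, let $\phi \in C([-1,1])$ be an arbitrary test function with modulus of continuity $\omega_\phi$. For each small $\eps > 0$ I would split the Birkhoff average as
\[
\int \phi \, d\mu_n(x) = \frac{1}{n}\sum_{g^j(x) \in U_\eps^+} \phi(g^j(x)) + \frac{1}{n}\sum_{g^j(x) \in U_\eps^-} \phi(g^j(x)) + \frac{1}{n}\sum_{g^j(x) \in K_\eps} \phi(g^j(x)),
\]
where $K_\eps = [-1,1] \setminus (U_\eps^+ \cup U_\eps^-)$. Uniform continuity of $\phi$ replaces the contributions over $U_\eps^{\pm}$ by $\phi(\pm 1) S_{n,\eps}^{\pm}(x)/n$ up to an additive error of order $\omega_\phi(\eps)$, while the contribution over $K_\eps$ is bounded in absolute value by $\|\phi\|_\infty K_{n,\eps}(x)/n$, which tends to $0$ by (the proof of) Lemma \ref{lem:occ-tim-middle}. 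The hypothesis $S_{n_k,\eps}^+/n_k \to p$, together with the complementary bound $S_{n_k,\eps}^-/n_k \to 1-p$ from Lemma \ref{lem:occ-tim-middle}, then gives
\[
\limsup_{k \to \infty} \Bigl| \int \phi \, d\mu_{n_k}(x) - \bigl( p\phi(1) + (1-p)\phi(-1) \bigr) \Bigr| \lesssim \omega_\phi(\eps).
\]
Letting $\eps \to 0$ yields $\int \phi \, d\mu_{n_k}(x) \to \int \phi \, d\nu_p$ for every continuous $\phi$, which is precisely $\mu_{n_k}(x) \to \nu_p$.

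The main subtlety to flag is that the limit measure $\nu_p$ places mass $p$ on the boundary point $1$ of the open set $U_\eps^+ = (1-\eps, 1)$, which prevents a direct application of the Portmanteau theorem to relate $\mu_{n_k}(U_\eps^+)$ with $\nu_p(U_\eps^+)$. The continuous-function sandwich, with test functions satisfying $\psi(1) = 1$ and $\psi(-1) = 0$, is the device that transfers the point mass at $1$ into the integral, and once it is in place both directions reduce to routine estimates using uniform continuity of $\phi$ and the asymptotic emptiness of the middle region guaranteed by Lemma \ref{lem:occ-tim-middle}.
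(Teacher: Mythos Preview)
Your proposal is correct and follows essentially the same route as the paper: for the forward direction you sandwich the indicator of $U_\eps^+$ between two continuous test functions that evaluate to $1$ at $+1$ and $0$ at $-1$, and for the reverse direction you split the Birkhoff average over $U_\eps^+$, $U_\eps^-$, and the middle region $K_\eps$, using uniform continuity on the first two pieces and Lemma~\ref{lem:occ-tim-middle} on the third. Your explicit observation that the orbit of a.e.\ $x$ avoids the finite endpoint set (so that open versus closed intervals make no difference in the sandwich) is a nice piece of care that the paper leaves implicit.
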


\begin{proof}
First suppose that \(\mu_{ n _ k } (x) \to \nu_p\) and fix some small \( \eps>0\). 
Then, from the definition of the function \(  S_{n, \eps}^{+}  \) and the measure \( \mu_{n}(x)\) we have 

\[
\frac{S_{n_{k}, \eps}^{+} } {n_{k} }
= 
\frac{1}{n_{k}}  \sum_{k = 0}^{ n - 1 } \mathbb{1}_{U_{\eps}^{+}} \circ g^{k}
=
\frac{1}{n_{k}} \sum_{k = 0}^{ n - 1 } \int \mathbb{1}_{U_{\eps}^{+}} d\delta_{g^{i}(x)}
=
\int \mathbb{1}_{U_{\eps}^{+}} d\mu_{n_{k}}(x). 
  \]      
  Now let  \( \tilde \varphi_{\varepsilon}, \hat\varphi_{\varepsilon}\) be two continuous functions  such that \( \tilde \varphi_{\varepsilon} \leq \mathbb{1}_{U_{\eps}^{+}}  \leq  \hat\varphi_{\varepsilon}\) and which are equal to 1 in some neighbourhood of \( 1\), equal to 0 in some neighbourhood of \( -1 \). 
  Then       
\[
\int \tilde \varphi_{\varepsilon} d\mu_{n_{k}}(x) \leq 
\int \mathbb{1}_{U_{\eps}^{+}} d\mu_{n_{k}}(x)
\leq 
\int  \hat\varphi_{\varepsilon} d\mu_{n_{k}}(x)
\]
and, since \( \mu_{n_{k}}(x) \to \nu_{p}\) by assumption, 
\[
\int \tilde \varphi_{\varepsilon} d\mu_{n_{k}}(x)  \to \int \tilde \varphi_{\varepsilon} d\nu_{p} = p 
\quand 
\int \hat \varphi_{\varepsilon} d\mu_{n_{k}}(x)  \to \int \hat  \varphi_{\varepsilon} d\nu_{p} = p 
\]
which, substituting into the expressions above,  implies  \(S_{n_k,\eps}^+(x) / n_k  \to p\). 
Now suppose  that 
\begin{equation}\label{eq:convtop}
\frac{S_{n_k,\eps}^{+} (x)}{n_{k}} \to p 
\quand 
\frac{S_{n_k,\eps}^{-} (x)}{n_{k}}  \to 1-p
\end{equation}
for all \(\eps > 0\) (where the second statement follows by the first and the second part of \eqref{eq:tilde-tau-limsup-liminf>1}). To show that \( \mu_{n_{k}}\to \nu_{p}\) we will show that for any   continuous and bounded function  \(\varphi\), we have \( \int \varphi d\mu_{n_{k}} \to \varphi(-1)( 1-p ) + \varphi(1)p = \int \varphi d\nu_{p}\).  Note first that by definition of the measures \( \mu_{n_{k}}(x) \), 
\begin{equation}\label{eq:timeav}
\int \varphi d\mu_{n_{k}} =  \frac{1}{n_{k}} \sum_{ i = 0 }^{ n_{k} - 1 } \varphi \circ g^i (x).   
\end{equation}
Now, for any  \(\tilde \eps > 0\), let  \(\eps > 0\) be sufficiently small so that  
\begin{equation}\label{eq:closeto1}
\sup_{ y \in U^{\pm}_{\eps} } | \varphi(\pm 1)  - \varphi ( y ) | < \tilde\eps,
\end{equation}
and split up the time averages \eqref{eq:timeav} of \( \varphi\) as follows: 
\begin{equation}\label{eq:splitsum}
\frac{1}{n_{k}}
\sum_{ i = 0 }^{ n_{k} - 1 } \varphi \circ g^i (x) = 
 \frac{1}{n_{k}}\sum_{\substack{0\leq i < n_{k}: \\ g^{i}(x)\in U_{\eps}^{-}}} \varphi \circ g^i (x) 
 + 
 \frac{1}{n_{k}} \sum_{\substack{0\leq i < n_{k}: \\ g^{i}(x)\in U_{\eps}^{+}}} \varphi \circ g^i (x)  
  + 
 \frac{1}{n_{k}}  \sum_{\substack{0\leq i < n_{k}: \\ g^{i}(x)\notin U_{\eps}^{\pm}}}  \varphi \circ g^i (x). 
\end{equation}
For the first two terms of \eqref{eq:splitsum}, by \eqref{eq:closeto1}, we have
\[
(\varphi(\pm1) - \tilde\eps)  \frac{S_{n_k,\eps}^{\pm} (x)}{n_{k}}  
 \leq 
  \frac{1}{n_{k}} \sum_{\substack{0\leq i < n_{k}: \\ g^{i}(x)\in U_{\eps}^{\pm}}}  \varphi \circ g^i (x) 
  \leq (\varphi(\pm 1) + \tilde\eps) \frac{S_{n_k,\eps}^{\pm} (x)}{n_{k}}  
\]
which, by \eqref{eq:closeto1}, for \( n_{k} \) sufficiently large, gives 
\begin{equation}\label{eq:convnup1}
(\varphi(1) - 2 \tilde\eps) p
 \leq 
  \frac{1}{n_{k}} \sum_{\substack{0\leq i < n_{k}: \\ g^{i}(x)\in U_{\eps}^{+}}}  \varphi \circ g^i (x) 
  \leq (\varphi(1) + 2\tilde\eps) p
\end{equation}
and 
\begin{equation}\label{eq:convnup2}
(\varphi(-1) - 2 \tilde\eps) (1-p)
 \leq 
  \frac{1}{n_{k}} \sum_{\substack{0\leq i < n_{k}: \\ g^{i}(x)\in U_{\eps}^{-}}}  \varphi \circ g^i (x) 
  \leq (\varphi(-1) + 2\tilde\eps) (1-p)
\end{equation}
For the third term of \eqref{eq:splitsum} we have 
\[
-  \| \varphi \|_{ \infty} \frac{n_{k} - S_{n_k,\eps}^+(x) - S_{n_k,\eps}^{-} (x)}{n_{k}}\leq 
  \frac{1}{n_{k}}  \sum_{\substack{0\leq i < n_{k}: \\ g^{i}(x)\notin U_{\eps}^{\pm}}}  \varphi \circ g^i (x) \leq
   \| \varphi \|_{ \infty} \frac{n_{k} - S_{n_k,\eps}^+(x) - S_{n_k,\eps}^{-} (x)}{n_{k}}
\]
and therefore, for \( n_{k}\) sufficiently large, 
\begin{equation}\label{eq:convnup3}
-  \tilde\eps \| \varphi \|_{ \infty}  \leq 
  \frac{1}{n_{k}}  \sum_{\substack{0\leq i < n_{k}: \\ g^{i}(x)\notin U_{\eps}^{\pm}}}  \varphi \circ g^i (x) \leq
  \tilde\eps \| \varphi \|_{ \infty}. 
\end{equation}
Substituting \eqref{eq:convnup1}-\eqref{eq:convnup3} into \eqref{eq:splitsum}, and using  \eqref{eq:timeav}, we then get that for \( n_{k}\) sufficiently large we have
\[
\int \varphi d\mu_{n_{k}}
 \geq   (\varphi(1) - 2\tilde\eps) p + (\varphi(-1) - 2\tilde\eps) (1-p)- \tilde\eps \|\varphi \|_{ \infty}
\]
and 
\[
\int \varphi d\mu_{n_{k}}
 \leq   (\varphi(1) + 2\tilde\eps) p + (\varphi(-1) +  2\tilde\eps) (1-p) +  \tilde\eps \|\varphi \|_{ \infty}. 
\]
Since \( \tilde\eps\) can be chosen arbitrarily small by choosing \( n_{k}\) sufficiently large, it follows that 
\[
\int \varphi d\mu_{n_{k}} \to \varphi(1) p   +  \varphi(- 1) ( 1 -  p)
= \int \varphi d\nu_{p}
\]
and since \( \varphi\) was also arbitrary, this implies that \( \mu_{n_{k}} \to \nu_{p}\) as \( k \to \infty\). 
\end{proof}

\begin{proof}[Proof of Theorem \ref{thm:no-phys-measures}]

Equation  \eqref{eq:tilde-tau-limsup-liminf>1} immediately  implies that the sequence \( \mu_{n}(x)\) cannot converge and therefore \( g\) has no physical measures. Moreover, since  \eqref{eq:sum-to-1} holds for every  \( \eps>0\),  the limit points of \(\mu_n (x)\) must have support in \(\{+1, -1\}\) and therefore must be convex combinations of the form  \(  \nu_{p} \coloneqq p\delta_1 + ( 1 - p ) \delta_{-1} \) for \( p\in [0,1]\), and thus belong to the set  \( \Omega\),  recall \eqref{eq:Omega}. 
It therefore just remains to show that for any \( p \in [0,1]\), the measure \( \nu_{p}\) is a limit point of the sequence \( \mu_{n}(x)\). 

In view of Lemma \ref{lem:conv-to-nu-p}, it is sufficient to show that for every \(p\in [0,1]\) there exists a  subsequence 
\begin{equation}\label{eq:convp}
 n_{k}^{(p)} \to \infty \quad \text{ such that } \quad 
 \frac{S_{n_{k}^{(p)},\eps}^+(x)}{n_{k}^{(p)}}  \to p
 \end{equation}
  for  small \(\eps > 0\). By  \eqref{eq:tilde-tau-limsup-liminf>1}  this is true for \( p=0, 1\)   i.e. there exist subsequences \( n_{k}^{(0)}, n_{k}^{(1)}\) such that 
 \begin{equation}\label{eq:twosubs}
 \frac{S_{n_{k}^{(0)},\eps}^+(x)}{n_{k}^{(0)}}  \to 0
 \quand 
 \frac{S_{n_{k}^{(1)},\eps}^+(x)}{n_{k}^{(1)}}  \to 1.
 \end{equation}
Now notice that by definition of the sequence \( S^{+}_{n,\eps}(x)\), recall \eqref{eq:def-S-n-eps-pm},  we have that 
\[
S^{+}_{n+1,\eps}(x) - S^{+}_{n,\eps}(x) = 
\begin{cases} 
1& \text{ if } g^{n}(x) \in   U_{\eps}^{+} \\
0 &  \text{ if } g^{n}(x) \notin   U_{\eps}^{+} 
\end{cases}
\]
In particular, for any \( n\geq 1\) we have  \( S^{+}_{n+1,\eps}(x) - S^{+}_{n,\eps}(x) \leq 1 \) and therefore 
\begin{equation}\label{eq:onestep}
\left|
\frac{S^{+}_{n+1,\eps}(x)}{n+1} - \frac{S^{+}_{n,\eps}(x)}{n} 
\right| 
\leq \frac{1}{n}.
\end{equation}
This   means that for sufficiently large \( n \), the difference between two consecutive terms of the sequence \( {S^{+}_{n,\eps}(x)}/{n} \) is arbitrarily small. By \eqref{eq:twosubs} this sequence oscillates infinitely often between values arbitrarily close to 0 and arbitrarily close to 1 and therefore, by \eqref{eq:onestep}, for any \( p \in (0,1)\) and any \( \delta>0\) the sequence \( {S^{+}_{n,\eps}(x)}/{n} \) must enter the interval \( (p-\delta, p+ \delta) \) infinitely often. Since \( \delta \) is arbitrary,  there must be a subsequence converging to \( p \),  giving \eqref{eq:convp} and completing the proof. 
\end{proof}

\section{Proof of Theorem \ref{thm:density-main}}
\label{sec:density}

Throughout this section we fix \( f \in \widehat{ \mathfrak{F} } \) and let and let \( \ell_1,\ell_2,  k_1,k_2 , a_1,a_2,b_1,b_2, \iota  \)  be the corresponding parameters  as in~\eqref{eqn_1}. Then, given  \emph{arbitrary constants} \( \tilde\ell_{1}. \tilde\ell_{2}\geq 0\),  in Section \ref{sec:construction}   we will give a quite explicit construction of a map \( g \) which, in Section \ref{sec:fhat}, we will show  belongs to our class \( \mathfrak{F} \), with parameters \( \tilde{\ell}_1, \tilde{\ell}_2,   k_{1}, k_2, a_{1}, a_2, \tilde b_1, \tilde b_2 , \tilde\iota \), for appropriately chosen constants \(\tilde b_1, \tilde b_2, \tilde \iota \). In Section \ref{sec:crclose} we estimate the distance between \( f \) and  \( g \) in an appropriate topology and finally, in Section \ref{sec:conc}, we apply these estimates to the various cases required by Theorem  \ref{thm:density-main} and thus complete the proof.

\subsection{Construction of \( g \)}
\label{sec:construction}

 In Section \ref{sec:const-1} we describe the general construction of \( g \) and introduce the other parameters and functions on which \( g \) will depend. In Section \ref{sec:const-2} we then make some specific choices of the various parameters and functions involved in the construction. 

\subsubsection{General strategy for constructing the perturbation}
\label{sec:const-1}

Let \( f \in \widehat{\mathfrak{F}} \) and let the corresponding parameters  as in~\eqref{eqn_1} be \( \ell_1,\ell_2 \geq 0 \), \(k_1,k_2 , a_1,a_2,b_1,b_2  > 0 \). For any two constants \(\tilde\ell_{1} > 0 \) and  \(\tilde\ell_{2} > 0\), and any two compact intervals 
\[
[\tilde x_{1}, x_{1}]\subset U_{-1} \quand  [x_{2}, \tilde x_{2}]\subset U_{1},
\] 
we define functions
\( h_1 : U_{-1} \to [-1,1] \) and \( h_2 : U_{1} \to [-1,1] \) as follows. If \( \tilde\ell_{1}=\ell_{1}\) or \( \tilde\ell_{2}=\ell_{2}\) then we  let \( h_{1}=f|_{U_{-1}}\) and \( h_{2}=f|_{U_{1}}\) respectively. Otherwise, we let 
\begin{equation}
  \label{eq:def-h1}
  h_1(x) \coloneqq x + \tilde b_1 ( 1 + x )^{1 + \tilde{ \ell }_1}
  \quand
    h_2(x) \coloneqq x - \tilde b_2 ( 1 - x )^{1 + \tilde{ \ell }_2},
\end{equation}
where
\( \tilde b_1, \tilde b_2 > 0 \) are any constants such that
\begin{equation}
  \label{eq:cond-monotonicty}
  h_{1}(x) \leq  f(x) \quad \text{ on } \quad [x_1, \tilde x_1], 
  \quand
  h_{2}(x) \geq g(x) \text{ on } \quad [ \tilde x_2, x_2], 
\end{equation}
for every integer \( k\leq \lceil \ell_{1} \rceil \) and \( k\leq \lceil \ell_{2} \rceil \)  respectively. Note that if \( \tilde \ell_{1,2} \geq \ell_{1,2} \) then we can take \( \tilde b_{1,2} = b_{1,2} \) and the corresponding line of \eqref{eq:cond-monotonicty} will hold. Moreover, regardless of the relative values of \( \ell_1,\ell_2 \) and \( \tilde \ell_1, \tilde \ell_2 \), we \eqref{eq:cond-monotonicty} will always hold for all \( \tilde b_1, \tilde b_2 > 0 \) sufficiently small as we are only asking for the inequalities in \eqref{eq:cond-monotonicty} to be satisfied for \( x \) in compact intervals away from \( -1 \) and \( 1 \).
We let 
\[
\xi_{1}: [\tilde x_{1}, x_{1}] \to [0,1] \quand \xi_{2}: [ x_{2}, \tilde x_{2}]\to [0,1]
\] 
be  \( C^{\infty}\)  \emph{monotone increasing bijections} and
 define  \(g\) on the intervals \( [-1, 0) \) and \( [0, 1 ] \) by 
\begin{equation}
  \label{eq:def-g-1}
  g|_{[-1, 0]} (x) \coloneqq
  \begin{cases}
    h_1(x) & \text{if } x \in [-1, -\tilde x_1] \\
    h_1(x) + \xi_1 (x) ( f(x) - h_1(x) ) & \text{if } x \in (\tilde x_1, x_1) \\
    f(x) & \text{if } x \in [x_1, 0 ).
  \end{cases}
\end{equation}
and
\begin{equation}
  \label{eq:def-g-2}
  g|_{[0, 1 ]} (x) \coloneqq
  \begin{cases}
      f(x) & \text{if } x \in [0, x_{2} ]
      \\
    f(x) +  \xi_2 (x) (h_2 (x) - f(x) ) & \text{if } x \in (x_{2}, \tilde  x_{2}) \\
h_2(x) & \text{if } x \in [\tilde x_{2}, 1 ].
  \end{cases}
\end{equation}
Notice that \( g \) is equal to \( f \) outside \( U_{-1}\) and \( U_{1}\) and, apart from \( \tilde\ell_{1}, \tilde\ell_{2}\),  depends on 
the   two intervals \( [\tilde x_{1}, x_{1}], [ x_{2}, \tilde x_{2}]\), the constants \(  \tilde b_1,\tilde b_2\), and the functions   \( \xi_{1}, \xi_{2}\), which we explain below how to choose. 

\subsubsection{Choosing \( x_1,x_2,\tilde x_1, \tilde x_2, \tilde b_1,\tilde b_2,  \xi_{1}, \xi_{2}\)}
\label{sec:const-2}

We explain  how to make a specific choice of  the constants \( \tilde x_1, \tilde x_2, \tilde b_1,\tilde b_2\) and the functions \(  \xi_{1}, \xi_{2}\) depending on arbitrary \( x_1 \in U_{-1} \) and \( x_2 \in U_{1} \). 
 First of all we  define the \emph{affine orientation preserving bijections}  \( \eta_{1}: [\tilde x_{1}, x_{1}] \to [0,1] \) and \( \eta_{2}: [ x_{2}, \tilde x_{2}]\to [0,1] \) by
\[
  \eta_1(x) \coloneqq \frac{x-\tilde x_1}{x_1-\tilde x_1}
  \quand 
  \eta_2(x)\coloneqq \frac{x-x_2}{\tilde x_2- x_2}.
\]
Then we define a \( C^\infty\) map  \( \xi:[0.1]\to [0,1]\) by 
\begin{equation}
  \label{eq:def-of-xi}
  \xi ( x ) \coloneqq
  \begin{cases}
    0 &\text{if } x = 0 \\
    \exp \left\{ 1 - \frac{ 1 }{ 1 - ( x - 1 )^2 } \right\} &\text{if } x \in (0, 1]. 
  \end{cases}
\end{equation}
and let 
\[
  \xi_1(x):= \xi\circ \eta_1(x)
  \quand 
  \xi_2(x):= \xi\circ \eta_2(x)
\]
Notice that \( \xi\) is  monotone increasing, \( \xi ( 0 ) = 0 \), \( \xi ( 1 ) = 1 \),  and   \( D^k \xi ( 0 ) = D^k \xi ( 1 ) = 0 \) for every \( k \geq 1 \) and therefore \( \xi_1, \xi_2\) are also in particular \( C^\infty\) and flat at the endpoints. 
We now fix 
\( \tilde x_1\) to be the mid-point between \( -1\) and \( x_1\), and \( \tilde x_2\) to be the midpoint between \( x_2\) and \( 1 \), i.e. 
\[
  \tilde x_1 \coloneqq \frac{1}{2} x_1 -  \frac{1}{2}, \quand \tilde x_2 \coloneqq \frac{1}{2} x_2 + \frac{1}{2}. 
\]
\begin{rem}
The fundamental reason for these choices is that,  by using the explicit forms of \( \eta_1, \eta_2\) and \( \xi\), and the definition of \( \xi_1, \xi_2\), we can verify by repeated use of the chain rule that there exists a \( C > 0 \) such that for every \( x_1 \in U_{-1} \), every \( x \in [x_1, \tilde x_1] \)  and for every \( k \leq \lceil \ell_1 \rceil \) we have 
\begin{equation}
 \label{eq:derv-xi-1}
 D^k \xi_1 ( x ) = D^{k-1} \left( \frac{2}{ 1 + x_1 } \xi'( \eta_1 (x) ) \right) = \left( \frac{2}{1 + x_1} \right)^{ k } D^k \xi ( \eta_1 (x) ) \leq C (1 + x_{1})^{-k}.
\end{equation}
and, similarly,   for every \( x_2 \in U_{1} \), every \( x \in [x_2, 1] \) and every \( k \leq \lceil \ell_2 \rceil \), we have 
\begin{equation*}
 D^k \xi_2 ( x ) \leq C (1 - x_{2})^{-k}.
\end{equation*}
\end{rem}

Next, we will fix \( \tilde b_1, \tilde b_2 > 0 \). These constants play no role in the statistical properties of the map but will need to be chosen carefully to ensure that \( g \) indeed satisfies \ref{itm:A0}-\ref{itm:A2}. We have already that \( \tilde{b}_1, \tilde{b_2} \) are small enough so that \eqref{eq:cond-monotonicty} holds, and now we also  require the following conditions:
\begin{equation}
  \label{eq:cond-on-b1}
  \begin{aligned}
  &\text{if } \ell_1 > 0, \text{then } &b_1 \ell_1 (1 + x)^{\ell_1} - \tilde b_1 \tilde \ell_1 ( 1 + x )^{\tilde \ell_1} &&\geq 0 
  &&\text{ for all } x \in [\tilde x_1, x_1],\\
  &\text{if } \ell_1 = 0, \text{then } &1 - \tilde b_1 \tilde \ell (1 + x)^{\tilde \ell_1 } &&\geq 0 
  &&\text{ for all } x \in [\tilde x_1, x_1],
  \end{aligned}
\end{equation}
and
\begin{equation}
  \label{eq:cond-on-b2}
  \begin{aligned}
  &\text{if } \ell_2 > 0, \text{then } &b_2 \ell_2 (1 + x)^{\ell_2} - \tilde b_2 \tilde \ell_2 ( 1 + x )^{\tilde \ell_2} &&\geq 0 
  &&\text{ for all } x \in [x_2, \tilde x_2 ],\\
  &\text{if } \ell_2 = 0, \text{then } &1 - \tilde b_1 \tilde \ell_2 (1 + x)^{\tilde \ell_2 } &&\geq 0 
  &&\text{ for all } x \in [x_2, \tilde x_2 ].
  \end{aligned}
\end{equation}
Note that it is always possible to find \( \tilde b_1, \tilde b_2 > 0 \) small enough so that the above hold.

This concludes our definition of the map \( g \) which depends only \( f \) and the parameters \( \tilde \ell_1, \tilde \ell_2 > 0 \) and \( x_1, x_2 \). Notice that choice of \( \tilde \ell_1, \tilde \ell_2 > 0 \) is completely arbitrary and the only restriction on the choice of \( x_1, x_2 \) is that they have to lie in the neighbourhoods \( U_{-1}, U_{1} \), in particular notice that \( x_1,x_2 \) can be chosen arbitrarily close to the fixed points \( -1,1 \).

\subsection{The map \( g \) is in the class \( \widehat{ \mathfrak{F} } \)}
\label{sec:fhat}

\begin{lem}
  \label{lem:fhat}
  \( g \) belongs to the class \( \widehat{ \mathfrak{F} } \) with  parameters \( \tilde{\ell}_1, \tilde{\ell}_2, k_{1}, k_2, a_{1}, a_2, \tilde b_1, \tilde b_2 \).  
\end{lem}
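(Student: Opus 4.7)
The plan is to verify conditions \ref{itm:A0}, \ref{itm:A1}, and \ref{itm:A2} directly for \( g \) with the declared parameters. The construction has been arranged so that \ref{itm:A0} and \ref{itm:A1} follow essentially from the gluing and the flatness of the bump \( \xi \), while \ref{itm:A2} is where the real work lies.

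For \ref{itm:A0}, the first step is piecewise \( C^2 \)-smoothness: away from the transition intervals \( [\tilde x_1, x_1] \) and \( [x_2, \tilde x_2] \), \( g \) coincides with one of \( h_1, h_2, f \), while on the transition intervals it is the smooth combination \( h_i + \xi_i(f - h_i) \). Since \( \xi \) from \eqref{eq:def-of-xi} is \( C^\infty \) with every derivative vanishing at \( 0 \) and \( 1 \), the functions \( \xi_1, \xi_2 \) are flat at the endpoints of their domains, so the four pieces glue together in \( C^2 \). Monotonicity on the transition intervals is then read off the decomposition
\[
  g'(x) = (1-\xi_i(x))\, h_i'(x) + \xi_i(x)\, f'(x) + \xi_i'(x)(f(x) - h_i(x)),
\]
whose first two terms form a strictly positive convex combination and whose last term is nonnegative by \eqref{eq:cond-monotonicty} and \( \xi_i' \ge 0 \). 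The strict domination of the identity by \( h_1, f \) on \( (-1, 0) \) (and symmetrically on \( (0, 1) \)) rules out interior fixed points, and the full branch property is inherited from \( g \equiv f \) near \( 0 \).

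Condition \ref{itm:A1} then holds essentially tautologically: \( g = h_1 \) on \( [-1, \tilde x_1] \) and \( g = h_2 \) on \( [\tilde x_2, 1] \) have the prescribed forms with parameters \( \tilde\ell_1, \tilde b_1, \tilde\ell_2, \tilde b_2 \), while \( g = f \) on one-sided neighborhoods of \( 0 \) inside \( [x_1, 0) \) and \( (0, x_2] \) retains the forms \( 1 - a_1|x|^{k_1} \) and \( -1 + a_2 x^{k_2} \). Choosing the new neighborhood size \( \tilde\iota \) smaller than \( 1+\tilde x_1 \), \( 1 - \tilde x_2 \), and the original \( \iota \) of \( f \) realizes all four one-sided neighborhoods of \ref{itm:A1}. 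The degenerate cases \( \ell_i = 0 \) or \( k_i = 1 \) are handled identically, either via the alternative form of \( h_i \) or directly via the identity \( g = f \) near \( 0 \).

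The hard part will be \ref{itm:A2}: the partitions \( \{\Delta_n^{\pm,g}\}, \{\delta_n^{\pm,g}\} \) and the integers \( n_\pm^g \) are defined dynamically by \( g \) and generically differ from those of \( f \), and the orbits of points in \( \delta_n^{\pm,g} \) traverse the modified regions near \( \mp 1 \) on their way back to \( \Delta_0^{\mp,g} \). My plan is to exploit the fact that \( g \equiv f \) on \( [x_1, 0) \cup (0, x_2] \): since \( g(x) \) remains close to \( \pm 1 \) for \( x \) in the modified regions, one checks inductively that \( \Delta_n^{\pm,g} = \Delta_n^{\pm,f} \) and \( \delta_n^{\pm,g} = \delta_n^{\pm,f} \) for every \( n \) up to the first index \( N \) at which these sets start intersecting the modified neighborhoods, and the \( g \)- and \( f \)-orbits of these points coincide for the first \( n \) steps. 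Shrinking \( \tilde\iota \) if necessary so that \( U_{0\pm}^g \supset \delta_{N-1}^{\pm,g} \) forces \( n_\pm^g \le N-1 \), at which point \ref{itm:A2} for \( g \) reduces to \ref{itm:A2} for \( f \) with the same \( \lambda \). If the constraints imposed by \ref{itm:A1} prevent such a small choice of \( \tilde\iota \), the remaining small-index cases can be controlled by a uniform lower bound on bounded-length products of \( g' \) along the relevant compact pieces of orbit, combined with the expansion already available from \( f \).
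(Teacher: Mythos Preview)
Your treatment of \ref{itm:A0} and \ref{itm:A1} is fine and essentially matches the paper. The gap is in your plan for \ref{itm:A2}.

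The reduction to \( f \) cannot cover all the indices you need. For \ref{itm:A1} to hold for \( g \), the new neighbourhood \( \widetilde U_{-1} = g(\widetilde U_{0+}) \) must lie inside \( [-1,\tilde x_1] \), the region where \( g = h_1 \). This forces \( \widetilde U_{0+} \) to be small, and hence \( \tilde n_+^g \) to be \emph{at least} the first index \( n \) for which \( \Delta_{n-1}^{-,g} \subset [-1,\tilde x_1] \). On the other hand, the identity \( g^n \equiv f^n \) on \( \delta_n^{+} \) (and hence the reduction to \ref{itm:A2} for \( f \)) holds only while the entire orbit segment \( \Delta_{n-1}^{-},\dots,\Delta_0^{-} \) stays in the unmodified region \( [x_1,0) \). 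Since \( [-1,\tilde x_1] \) and \( [x_1,0) \) are disjoint, there is \emph{always} a nonempty range of indices \( n \) between these two regimes for which the orbit genuinely passes through the interpolation zone \( [\tilde x_1,x_1] \) and no reduction to \( f \) is available. Adjusting \( \tilde\iota \) cannot close this gap (and note that to get \( U_{0\pm}^g \supset \delta_{N-1}^{\pm,g} \) you would need to \emph{enlarge} \( \tilde\iota \), not shrink it --- but the constraint above blocks that).

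Your fallback (``uniform lower bound on bounded-length products of \( g' \), combined with expansion from \( f \)'') is not enough either. On the interpolation interval \( g' \) is a perturbation of \( h_1' \) and \( f' \), both of which are arbitrarily close to \( 1 \) near the neutral fixed point, so the product over those steps is only \( \gtrsim 1 \), not \( >\lambda \); and the remaining factor is \( (f^{m})' \) evaluated on some \( \Delta_m^{-} \), which is not what \ref{itm:A2} for \( f \) controls. The paper closes this gap with a bootstrap: one shows \( (g^{n+1})'(x) \ge (g^{n})'(\phi(x)) \) for \( \phi = (g|_{U_{0+}})^{-1}\circ g|_{U_{-1}}\circ g|_{U_{0+}} \), which lets one step the index down to the range already handled by \( f \). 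This inequality on the interpolation interval is exactly where the extra conditions \eqref{eq:cond-on-b1}--\eqref{eq:cond-on-b2} on \( \tilde b_1,\tilde b_2 \) are used; you never invoke them, which is a clear signal that the essential step is missing.
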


\begin{rem}
  Note that all of the parameters of \( g \) are fixed by the map \( f \) \emph{except} for \( \tilde \ell_1, \tilde \ell_2 \) which are \emph{arbitrary} positive constants, which means that we are free to choose \( \tilde \ell_1 , \tilde \ell_2 \) so that \( g \) is in any one of the three distinct subclasses \( \mathfrak{F}, \mathfrak{F}_{\pm}, \mathfrak{F}_{*} \) defined in \eqref{eq:def-subclasses}.
\end{rem}

\begin{proof}
It follows immediately  from the construction that the map is full branch and \( C^{2} \) and that \ref{itm:A0}  is therefore satisfied. The neighbourhoods \( U_{\pm 1}\) and \( U_{0\pm}\) no longer have the required form as in \ref{itm:A1} but we can shrink them and define the intervals 
  \begin{equation}
    \label{eq:def-tilde-U}
    \begin{split}
        \widetilde{U}_{-1} \coloneqq [-1, \tilde x_{1} ), 
        \quad 
        \widetilde U_{1} \coloneqq ( \tilde x_{2}, 1 ], 
        \quad 
         \widetilde{U}_{0-} = g^{-1} (\widetilde U_{1}),
         \quad 
        \widetilde{U}_{0+} \coloneqq g^{-1} (   \widetilde{U}_{-1}). 
    \end{split}
  \end{equation}
It then follows  that  \( g \)  satisfies \ref{itm:A1} with respect to these neighbourhoods for the required parameters \( \tilde{\ell}_1, \tilde{\ell}_2, k_{1}, k_2, a_{1}, a_2, \tilde b_1, \tilde b_2 \). Since we have shrunk the neighbourhoods and modified the map in the regions \( U_{-1}\setminus   \widetilde{U}_{-1} \) and \( U_{1}\setminus   \widetilde{U}_{1} \) it is no longer immediate that the expansivity condition  \ref{itm:A2} continues to hold outside these new neighbourhoods and we therefore just need to check \ref{itm:A2}.

  Let \( \tilde{\delta}_{n}^{\pm}, \tilde{\Delta}_n^{\pm} \) and \( \delta_n^{\pm}, \Delta_n^{\pm} \) denote the partitions corresponding to \( g \) and \( f \) respectively. We know from the construction of \( g \) that \( g^{n} \equiv f^{n} \) on \( \delta_n^{\pm} \), and \( \tilde \delta_n^{\pm} = \delta_n^{\pm} \) for every \( n \leq n^{\pm} \coloneqq \min \{ x : \delta_n \subset U_{ 0 \pm } (f) \}\). As \( f \) satisfies \ref{itm:A2} we know that
  \begin{equation}
    \label{eq:A2-for-f}
    (g|_{\delta_n^{\pm}}^n)'(x) = (f|_{\delta_n^{\pm}}^n)'(x) > \lambda > 1 \text{ for every } n \leq n^{\pm}.
  \end{equation}
  So, in order to verify that \( g \) satisfies \ref{itm:A2} it remains to check that
  \begin{equation}
    \label{eq:g-A2-main-claim}
    (g^n)' (x) > \lambda > 1, \text{ for every } x \in \tilde \delta_n^{\pm} \text{ and for every }  n^{\pm} + 1 \leq n \leq \tilde{n}^{\pm},
  \end{equation}
  for some (possibly different) \( \lambda > 1  \),
  where \( \tilde{n}^{\pm} \coloneqq \min \{ x : \delta_n \subset \tilde U_{ 0 \pm } (f) \}\).
  We will follow the argument given \cite{CoaLuzMub22}*{Section 3.3} and show that \( (g^{n+1})' (x) \geq (g^{n})'(x) \) for every \( n \geq n^{+} \) and every \( x \in \tilde \delta_n^{+} \). The argument for \( n \geq n^{-} \) and \( x \in \tilde{ \delta }_{n}^- \) then follows in the same way. Note that if \( k_1, \in ( 0, 1 ] \) then there is nothing to prove, so we will assume throughout this proof that \( k_1 > 1 \).
  As in \cite{CoaLuzMub22}*{Section 3.3}  we define
  \begin{equation}\label{eq:def-phi}
      \phi \coloneqq ( g|_{ U_{0+} } )^{-1} \circ g|_{ U_{-1} } \circ g |_{ U_{0+}}
  \end{equation}
  and claim that the conclusion of \cite{CoaLuzMub22}*{Lemma 3.7} holds for \( g \), namely we claim that
  \begin{equation}\label{eq:phiexpansion}
   \frac{(g^2)'(x)}{ g' ( \phi (x) )} = \frac{ g'(x) }{ g'(\phi(x))} g'(g(x)) > 1, \text{ for every }  x \in \tilde \delta_{n+1}^{+}  \text{ and every } n \geq n^{+} .
  \end{equation}

  Notice that if \( g (x) \in [ x_1, 0) \), or if \( g(x) \in [-1, \tilde x_1] \) then we can apply\cite{CoaLuzMub22}*{Lemma 3.7}   to obtain \( (g^2)'(x) / g'( \phi (x)) > 1 \). If instead \( g(x) \in ( \tilde x_1 , x_1 ) \), then \cite{CoaLuzMub22}*{Lemma 3.7} cannot be applied directly, but its proof can be adapted to our setting as show below. So, let us assume that \( x \in \delta_n^+ \) for some \( n \) and that \( g(x) \in ( \tilde x_1 , x_1) \). Since we are working only with \( x \in \delta_n^+ \) we will drop the subscripts on the parameters to ease notation, specifically we will let \( \ell = \ell_1 \), \( \tilde \ell = \tilde \ell_1 \), \(b = b_1 \), \()\tilde b = \tilde b_1\), \(k = k_2\) and \( a = a_2 \). By the definition of \( g \) in \( U_{0+}\) given in \ref{itm:A1}  we have
  \begin{equation}
    \label{eq:g-p-over-g-phi}
    \frac{g'(x)}{g'(\phi(x))} = \left(\frac{ x }{ \phi (x) }\right)^{k - 1} =
    \left(\frac{ \phi (x) }{ x }\right)^{1-k}
  \end{equation}
  Recall that \( k > 1\) and \( x< \phi(x)\)  and so the ratio above is strictly less than \( 1\). Let us fix
  \begin{equation}
    \label{eq:def-y}
    y \coloneqq g ( x ) = -1 + a x^{k},
  \end{equation}
  We will compute \( ( \phi(x) / x )^{ k } \) and \( g' ( g (x) ) \) in case that \( \ell = 0 \) and the case that \( \ell > 0 \) separately. First we note that regardless the value of \( \ell \) we find that inserting the definition of \( g \) into \eqref{eq:def-phi} yields
  \begin{equation}    \label{eq:phi-x}
    \varphi (x) = \left( \frac{1}{a} \right)^{1/k} ( 1 + g(y) )^{ 1/k } 
    = \left( \frac{1}{a} \right)^{1/k} [ 1 + h_1 (y) + \xi (y) ( f(y) - h_1 (y) ) ]^{ 1/k }.
  \end{equation}
  and, using \eqref{eq:cond-monotonicty} we have that 
  \begin{equation}  \label{eq:g-p-g} 
  g'( g(x)  ) = h_1'(y) + \xi( y) ( f'(y) - h_1'(y) ) + \xi'(y) ( f (y) -  h(y) ) \geq h_1'(y) + \xi( y) ( f'(y) - h_1'(y) )
  \end{equation}

  1) Suppose that \( \ell > 0 \).
  Inserting \eqref{eq:def-y} into the definitions of \( f \) and \( h_1 \) we find
  \( f(y) - h_1(y) = a x^k (b a^{\ell} x^{ \ell } - \tilde b a^{\tilde \ell} x^{ \tilde \ell } ) \), and we note for later use that \eqref{eq:cond-monotonicty} ensures that
  \begin{equation}
    \label{eq:d-positive}
    b a^{\ell} x^{ \ell } - \tilde b a^{\tilde \ell} x^{ \tilde \ell } \geq 0.
  \end{equation}
  Thus, from \eqref{eq:phi-x}, 
  \begin{align*}
    \left( \frac{ \phi (x) }{ x } \right)^{k}
    &=\left( \frac{1}{ a x^k } \right) \left[ ax^k + \tilde b a^{\tilde \ell + 1} x^{k ( \tilde \ell + 1 )} + \xi (y) a x^k \left( b a^{\ell} x^{ \ell } - \tilde b a^{\tilde \ell} x^{ \tilde \ell } \right) \right] 
    \\
    &= 1 + \tilde b a^{\tilde \ell} x^{k\tilde \ell} + \xi (y)\left( b a^{\ell} x^{ \ell } - \tilde b a^{\tilde \ell} x^{ \tilde \ell } \right) \\
  \end{align*}
  Next, using \eqref{eq:cond-on-b1}, \eqref{eq:g-p-g} and \eqref{eq:d-positive},   we obtain
  \begin{align}
    \nonumber
    g'( g(x)  )
    \nonumber
    &> 1 + \tilde b x^{ k \tilde \ell } +  \xi( y)( f'(y) - h_1'(y) ) \\
    \nonumber
    &= 1 + \tilde b x^{ k \tilde \ell } + \xi( y )( (1 + \ell ) b a^{\ell} x^{k \ell} - (1 + \tilde \ell )\tilde b a^{\tilde \ell} x^{k \tilde \ell}) \\
        &\geq 1 + \tilde b x^{ k \tilde \ell } + \xi( y ) \left( b a^{\ell} x^{k \ell} -  \tilde b a^{\tilde \ell} x^{k \tilde \ell} \right) 
    = \left( \frac{ \phi (x) }{ x } \right)^{k}.
       \label{eq:g-prime-g}
  \end{align}
  So, from \eqref{eq:g-p-over-g-phi} and \eqref{eq:g-prime-g} we get 
  \[
  \frac{ g'(x) }{ g'( \phi (x) ) } g' ( g (x ) )
  > \left( \frac{ \phi (x) }{ x  } \right)^{k - 1} \left( \frac{ \phi (x) }{ x  } \right)^{k} > 1,
  \]
  Which proves our claim \eqref{eq:phiexpansion}, in the case that \( \ell > 0 \).

  2) If \( \ell = 0  \), then we proceed as before and insert the definitions of \( f \) and \( h_1 \)
  into \eqref{eq:def-phi}
  \begin{align*}
    \phi(x) = \left(\frac{1}{a} \right)^{1/k} \left[ ax^k + \tilde b a^{\ell + 1 } x^{k ( \tilde \ell + 1 )} + \xi(y)\left( - 1 + \eta (y) + b a x^{k} - \tilde b a^{1 + \tilde \ell} x^{ k ( 1 + \tilde \ell )} \right)  \right],
  \end{align*}
  and so 
  \[
    \left( \frac{ \phi (x) }{ x } \right)^{k} = 
    1 + \tilde b a^{ \tilde \ell} x^{k \tilde \ell } + \xi(y)\left( \frac{- 1 + \eta (y)}{ ax^{k}} + b - \tilde b a^{\tilde \ell} x^{ k \tilde \ell } \right).
  \]
  We recall from \cite{CoaLuzMub22}*{Equation(13)} that \( \eta''(y) \implies \eta' (y) \geq \eta (y) / ( 1 +y ) > ( \eta (y) - 1 )/ ( 1 + y ) \) for every \( y \in U_{-1} \).
  So, inserting the expressions for \( f' \) and \( h_1' \) into \eqref{eq:g-p-g}
  \begin{align*}
    g'(y)  &\geq 1 + \tilde b ( 1 + \tilde \ell ) (1 + y )^{\tilde \ell} + \xi (y) \left( \eta'(y) + 1 + b - \tilde b (1 + \tilde \ell ) (1 + y )^{ \tilde \ell } \right) \\
    &> 1 + \tilde b a^{\tilde \ell} x^{ k\tilde \ell} + \xi(y) \left( \frac{-1 + \eta (y) }{ ax^{k} } + b - \tilde b a^{\tilde \ell} x^{ k \tilde \ell } + 1 - \tilde b \tilde\ell a ^{\tilde \ell} x^{ k \tilde \ell} \right)
    \geq \left( \frac{ \phi (x) }{ x } \right)^{k}.
  \end{align*}
  This concludes \eqref{eq:phiexpansion} in the case that \( \ell = 0 \).
  We can then proceed as in \cite{CoaLuzMub22}*{Corollary 3.9}, to get 
  \[
  (g^{n+1}) ' (x) = g'(x) g'(g(x)) \cdots g'(g^{n} (x))
  = \frac{ g'(x) g'(g (x))}{ g'( \phi(x)) }  (g^{n})'(\phi (x))
  > (g^{n})'(\phi (x)).
  \]
  This, together with \eqref{eq:A2-for-f}, implies \eqref{eq:g-A2-main-claim} and allows us to conclude that \( g \) satisfies \ref{itm:A2}.
\end{proof}

\subsection{\( g \) is \( C^r \) close to \( f \)}
\label{sec:crclose}

Let \( f \in \widehat{ \mathfrak{F} } \). In Sections \ref{sec:construction} and \ref{sec:fhat} we  constructed a map  \( g \in \widehat{ \mathfrak{F} } \) ultimately depending only on the choice of two arbitrary constants  \( \tilde \ell_1, \tilde \ell_2 > 0 \) and two points  \( x_1 \in U_{-1} \), \( x_{2} \in U_{1} \). We now show that \( g \) can be chosen arbitrarily close to \( f \), by choosing the points \( x_{1}, x_{2}\) sufficiently close to the fixed points \( -1, 1\) respectively, in a topology determined by the constants 
\begin{equation}\label{eq:approxreg}
  r_{1} \coloneqq \min \{ \ell_1, \tilde \ell_1 \},
  \quand r_{2} \coloneqq \min \{ \ell_1, \tilde \ell_2 \}.
\end{equation}
More precisely,  we have the following result.

\begin{lem}
  \label{lem:c-r-close}
  There exists a \( C > 0 \) such that
  \[
    \| f - g \|_{ C^{ \lceil r_1 \rceil }([-1,0]) } \leq C ( 1 + x_1 )^{ 1 + r_1 - \lceil r_1 \rceil },
    \quand
    \| f - g \|_{ C^{ \lceil r_2 \rceil }([0,1])  } \leq C ( 1 - x_2 )^{ 1 + r_2 - \lceil r_2 \rceil }.
  \]
\end{lem}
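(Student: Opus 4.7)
My plan is to exploit the fact that, by construction, $f-g$ is supported in $[-1,x_1]$ on the left branch (and symmetrically in $[x_2,1]$ on the right), so the $C^{\lceil r_1\rceil}$-norm reduces to estimating derivatives on two explicit pieces: the ``outer'' piece $[-1,\tilde x_1]$ where $g\equiv h_1$ and we have a closed-form expression for $f-h_1$, and the ``transition'' piece $(\tilde x_1,x_1)$ where $g=h_1+\xi_1(f-h_1)$ so that $f-g=(1-\xi_1)(f-h_1)$. On both pieces, all derivatives will be pure polynomial expressions in $(1+x)$ of explicit order, which makes the required decay in $(1+x_1)$ transparent. The proof of the estimate on the right branch will be entirely symmetric, so I will treat only the left one.

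On $[-1,\tilde x_1]$, assuming $\ell_1,\tilde\ell_1>0$, one has
\[
 f(x)-g(x) = b_1(1+x)^{1+\ell_1} - \tilde b_1(1+x)^{1+\tilde\ell_1},
\]
so for $0\le j\le\lceil r_1\rceil$ each term of $D^j(f-g)$ is a constant multiple of $(1+x)^{1+\ell_1-j}$ or $(1+x)^{1+\tilde\ell_1-j}$. Since $r_1=\min\{\ell_1,\tilde\ell_1\}\le\ell_1,\tilde\ell_1$ and $(1+x)\le 1+\tilde x_1=(1+x_1)/2$ is small, the exponent $1+r_1-j$ governs both terms and one gets $|D^j(f-g)(x)|\le C(1+x_1)^{1+r_1-j}$ uniformly on this interval. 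On $(\tilde x_1,x_1)$, I will apply Leibniz to $(1-\xi_1)(f-h_1)$, use the estimate $|D^m\xi_1(x)|\le C(1+x_1)^{-m}$ established in \eqref{eq:derv-xi-1} (for $m\le\lceil\ell_1\rceil$, hence in particular for $m\le\lceil r_1\rceil$), and the same direct differentiation as above to get $|D^{j-m}(f-h_1)(x)|\le C(1+x_1)^{1+r_1-(j-m)}$ on this interval. The product in each Leibniz term then telescopes to $C(1+x_1)^{1+r_1-j}$, independent of $m$. Taking $j=\lceil r_1\rceil$ yields the claimed bound.

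The boundary cases $\ell_1=0$ and/or $\tilde\ell_1=0$ need brief separate treatment because the formulas for $f$ and/or $h_1$ are then replaced by the linear-plus-$\eta$ expression from \ref{itm:A1}; but in those cases $\eta$ is $C^2$ with $\eta(-1)=\eta'(-1)=0$ and $\eta''$ uniformly bounded on $U_{-1}$, so Taylor expansion at $-1$ gives $|D^j\eta(x)|\le C(1+x)^{2-j}$ for $j\le 2$, which is again of the required form (with $r_1=0$, so $\lceil r_1\rceil\in\{0,1\}$ and the bound $(1+x_1)^{1-\lceil r_1\rceil}$ is what comes out). The only genuinely delicate point — and the one that I expect to be the main bookkeeping obstacle — is to verify that when Leibniz produces a term with $m$ derivatives on $\xi_1$ and $j-m$ on $f-h_1$, the exponent $-m+(1+r_1-(j-m))=1+r_1-j$ really does simplify as claimed independently of how the derivatives are split; this is essentially the raison d'être of the scaling choice $\tilde x_1=(x_1-1)/2$ in the remark following \eqref{eq:derv-xi-1}, and it is what ensures that a single constant $C$, depending only on $f,\tilde\ell_1,\tilde\ell_2$ but not on $x_1$, suffices.
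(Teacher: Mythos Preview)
Your proposal is correct and follows essentially the same approach as the paper: split into the outer piece $[-1,\tilde x_1]$ where $g=h_1$ and differentiate the explicit formula for $f-h_1$, then on the transition piece apply Leibniz together with the scaling bound \eqref{eq:derv-xi-1} on the derivatives of $\xi_1$. The only cosmetic difference is that you apply Leibniz to $f-g=(1-\xi_1)(f-h_1)$ directly, whereas the paper applies it to $g-h_1=\xi_1(f-h_1)$ and then subtracts; also note that $\lceil 0\rceil=0$, so in the boundary case $r_1=0$ the target norm is just $C^0$ (your allowance for $\lceil r_1\rceil=1$ there is unnecessary).
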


\begin{proof}
  We will only give an explicit proof of the bound for \( \| f - g \|_{C^{\lceil r_1 \rceil }} \) as the argument for \( \| f - g \|_{C^{\lceil r_2 \rceil }} \) is the same.
  If \( \ell_1 = \tilde{\ell_1} \) we obtain trivially that \( f - g \equiv 0 \). So, let us assume \( \ell_1 \neq \tilde{\ell_1} \) and consider the subcases \( \ell_1 = 0 \), \( \ell_1 > 0 \) separately.
  If \( \ell_1 = 0 \), then \( r_1 = 0 \) and using the definition \eqref{eq:def-h1} of \( h_1 \) near \( -1 \), the fact that \( \xi \) is bounded, and the definition of \( g \) one finds that
  \[
    \| g - f \|_{C^0} \leq 2 \| h_1 - f \|_{C^0} < C (1 + x) \leq C (1 + x_1).
  \]
  This proves the result in the case that \( \ell_1 = 0. \)
  Let us now suppose throughout the remainder of the proof that \( \ell_1 > 0 \). We begin by establishing the following sublemma.

  \begin{sublem}
    \label{sublem:derv-bounds}
    There exists a \( C > 0 \) so that for every \( 1 \leq k \leq \lceil r_1 \rceil \)
    \begin{equation}
      \label{eq:derv-bounds}
      \begin{split}
      &| D^k f(x) - D^k h_1 (x) | \leq C( 1 +  x_{1})^{1 + r_1 - k }\quad \forall x \in [-1, x_1] \\
      \end{split}
    \end{equation}
  \end{sublem}

  \begin{proof}[Proof of Sublemma \ref{sublem:derv-bounds}]
    Suppose that \( 0 < r_1 = \ell_1 < \tilde \ell_1 \), let \( 1 \leq k \leq \lceil r_1 \rceil \)
    and note that for some constants \( c_k, \tilde c_k \)
    \begin{align*}
      | D^k f (x) - D^k h_1 (x) | &= | c_{k} ( 1 + x )^{ 1 + \ell_1 - k } + \tilde c_{k} ( 1 + x )^{ 1 + \tilde \ell_1 - k } | 
      = ( 1 + x )^{ 1 + \ell_1 - k } | c_k - \tilde c_{k}( 1 + x ) ^{\tilde \ell_1 - \ell_1} |.
    \end{align*}
    As \( ( 1 + x ) ^{\tilde \ell_1 - \ell_1} \to 0 \) as \( x \to -1 \), and as we are considering only finitely many \( k \), we see that there exists some constant \( C > 0 \), independent of \( k \) such that \eqref{eq:derv-bounds} holds.
    Suppose now that \( 0 < \tilde \ell_1 < \ell_1 \). Repeating the calculation above with  \( \ell_1 \) and \( \tilde \ell_1 \) exchanged we conclude the proof.
  \end{proof}

  We now continue the proof of Lemma \ref{lem:c-r-close}. The bound \eqref{eq:derv-bounds} immediately implies 
  \[
   \| f - g \|_{C^{\lceil r_1 \rceil} [-1, \tilde x_1 ]} = \| f - h_1 \|_{C^{\lceil r_1 \rceil} [-1, \tilde x_1 ]} \leq C (1 + x_1)^{1 + r_1 - \lceil r_1 \rceil}
   \]
    for some \( C \) which depends on \( \lceil r_1 \rceil \), but not on \( x_1 \). For \( x \in [\tilde{x}_1, x_1] \) we find by repeated applications of the product rule that
  \begin{equation}\label{eq:derv-of-g}
    D^k g(x)
    = D^k h_1 (x) + \sum_{ j = 0 }^{ n } \binom{k}{j} D^j \xi_1(x) \cdot ( D^{k -j} f (x) - D^{ k - j} h_1 (x) ).
  \end{equation}
  Using \eqref{eq:derv-xi-1} and using \eqref{eq:derv-bounds}, we obtain
  \begin{align}
    \nonumber
    \left|\sum_{ j = 0 }^{ n } \binom{k}{j} D^j \xi_1 \cdot ( D^{k -j} f (x) - D^{ k - j} h_1 (x) ) \right|
    &\leq C \sum_{ j = 0 }^{ n } \binom{k}{j} ( 1 + x_1)^{-j} x_1^{ 1 + r_1 - k + j}  \\
    \label{eq:sum-of-derv}
    &= C ( 1 + x_1)^{1 + r_1 -k}.
  \end{align}
  Finally, combining \eqref{eq:derv-bounds}, \eqref{eq:derv-of-g} and \eqref{eq:sum-of-derv} we find that for any \( k = 0 , \ldots, \lceil r_1 \rceil \) we find that
  \(
  |D^k f (x) - D^k g (x)  | \leq   C( 1 + x_1)^{1 + r_1 - k }
  \), for every \( x \in [-1, -1 + x_1] \).
  So,
  \begin{equation}\label{eq:Cr-norm-g-1}
    \| f - g \|_{ C^{\lceil r_1 \rceil} [ -1, 0 ] } \leq C (1 + x_1)^{1 + r_1  - \lceil r_1 \rceil },
  \end{equation}
  for some \( C \) which does not depend on \( x_1 \).
\end{proof}

\subsection{Concluding the proof of Theorem \ref{thm:density-main}}
\label{sec:conc}

\begin{proof}
  Let \( f \in \widehat{\mathfrak{F}} \) and let \( \varepsilon > 0 \). For each of the classes \( \mathfrak{F},  \mathfrak{F}_{\pm},\mathfrak{F}_* \) we will choose   \( \tilde \ell_1, \tilde \ell_2 > 0 \) so that the corresponding map \( g \) constructed in Section \ref{sec:construction} belongs to the chosen class, and then, by Lemma~\ref{lem:c-r-close}, we can choose  \( x_1 \in U_{-1},  x_2 \in U_{1} \) so that \( g \)  is \( \varepsilon \)-close to \( f\) in the appropriate topology.

We illustrate this process in detail in a couple of  cases and then give some tables to show that choices in all cases. Suppose first that \( f \in \mathfrak{F} \) (so that  \( \beta\in [0,1)\) and  \( f \) has a physical measure  equivalent to Lebesgue) and let us approximate \( f \) by some map \( g\in \mathfrak F_{*}\),  ( so that \( \beta\geq 1\) and \( g \) has  no physical measure). Set \( \tilde \ell_{1}=1/k_{2},  \tilde \ell_{2}=1/k_{1}\) so that \( \tilde\ell_{1}k_{2}= \tilde\ell_{2}k_{1}=1\), which ensures that \( g\in \mathfrak F_{*}\). Notice that we could  choose  \( \tilde \ell_{1}=t/k_{2},  \tilde \ell_{2}=t/k_{1}\) for any \( t \geq 1 \)  and that for any such choice we have  \( \tilde\ell_{1} > \ell_{1}\) and \( \tilde\ell_{2}> \ell_{2}\) because by assumption we have \( \ell_{1}k_{2}, \ell_{2}k_{1}\in [0,1)\). 
Once \( \tilde\ell_{1}, \tilde\ell_{2}\) have been chosen we immediately get the regularity of the approximation from \eqref{eq:approxreg} which in this case is given by \( r= \lceil \min\{\ell_{1}, \tilde\ell_{1}, \ell_{2}, \tilde\ell_{2}\}\rceil =  \lceil\min\{\ell_{1},  \ell_{2}\}\rceil= r_{*}(f)  \), thus proving the Theorem in this case.

For a second example, suppose that \( f \in \mathfrak{F}_{\pm} \) with \( 0 < \beta^+ < 1 \leq \beta^- \) (so that \( f \) has a physical measure supported on the fixed point \( -1 \)), and let us construct a \( \tilde g \in \mathfrak{F} \) (so that \( g \) has a physical measure equivalent to Lebesgue)  that is close to \( f \). Recall that \( \tilde g \in \mathfrak{F} \) if and only if \(  \beta^+, \beta^- \in [0,1) \)
and so we  can leave unchanged the value of \( \beta^{+}\), and therefore let \( \tilde\ell_{2}=\ell_{2}\), but we need to lower the value of \( \beta^{-}\) to something less than 1. We can do this by letting  \( \tilde\ell_{1} = 1/ k_2 - \gamma \)  for any \( 0 < \gamma < 1/ k_2 \), which  gives \( \beta^{-}(g)=\tilde\ell_{2}k_{2}= ( 1/ k_2 - \gamma) k_{2} = 1- k_{2}\gamma <1\), and therefore   \( g \in \mathfrak{F} \) as required.
To estimate the distance between  \( f \) and \( g \), and to choose the appropriate metric for this distance, notice that \( g|_{[0,1]}=f|_{[0,1]}\) and therefore we only need to worry about the distance between \( f\) and \( g \) on \( [-1,0]\). Therefore,  by \eqref{eq:approxreg}
we get \( r = \lceil \min\{\ell_{1}, \tilde \ell_{1}\} \rceil  =\lceil \ell_{1} \rceil  =\lceil 1 / k_2 - \gamma \rceil \) and by Lemma \ref{lem:c-r-close} we can construct \( g \) arbitrarily close to \( f \) in the \( d_{r}\) metric with \( r =  \lceil 1 / k_2 - \gamma \rceil \) for any \( \gamma>0\) arbitrarily small. Notice however that if \( \gamma\) is sufficiently small then  \( \lceil 1 / k_2 - \gamma \rceil = \lceil 1 / k_2 \rceil \) and therefore \( r = \tilde r(f) \) as claimed in the Theorem.

All the cases can be obtained by a simple reasoning as illustrated in the two examples above, from which we deduce the choices for \( \tilde\ell_{1}, \tilde\ell_{2}\) and the corresponding regularity of the approximation given in the Table 1 below, thus completing the proof. 
  \renewcommand{\arraystretch}{1.4}
  \begin{table}[h]
  \label{table1}
    \centering
    \subfloat[Choice of \( \tilde{\ell}_1, \tilde{\ell}_2 \) for constructing \( \tilde{f} \)]{
    \begin{tabular}{@{}lcc@{}}
      \toprule
      Parameters of \( f \) & \( \tilde{ \ell }_1 \) & \( \tilde{ \ell }_2 \)  \\
      \midrule
      \( 1 \leq \beta^+ < 1 \leq \beta^- \) & \( 1/k_2 - \gamma \)  & \( \ell_2 \)   \\
      \( 1  \leq \beta^- < 1 \leq \beta^+ \) & \(  \ell_1 \) & \(  1/k_1 - \gamma \)  \\
      \( \beta^+, \beta^-  \geq 1 \) & \(1/k_2 - \gamma \) &  \( 1/k_1 - \gamma \)  \\
      \bottomrule
    \end{tabular}}
    \\
    \quad
    \subfloat[Choice of \( \tilde{\ell}_1, \tilde{\ell}_2 \) for constructing \( f_{\pm} \)]{
    \begin{tabular}{@{}lcc@{}}
      \toprule
      Parameters of \( f \) & \( \tilde{ \ell }_1 \) & \( \tilde{ \ell }_2 \)  \\
      \midrule
      \( \ell_1 \geq \ell_2 \) and \( \beta^- < 1 \)  & \(  1/k_2 \) & \(  \ell_2 \)   \\
      \( \ell_1 \geq \ell_2 \) and \( \beta^- \geq 1 \)  & \(  \ell_1 + \gamma \) & \(  \ell_2 \) \\
      \( \ell_1 < \ell_2 \) and \( \beta^+ < 1 \)  & \(  \ell_1 \) & \(  1 / k_1 \)   \\
      \( \ell_1 < \ell_2 \) and \( \beta^+ \geq 1 \)  & \(  \ell_1 \) & \(  \ell_2 + \gamma \)   \\
      \bottomrule
    \end{tabular}
    }
    \\
    \vspace{1em}
    \subfloat[Choice of \( \tilde{\ell}_1, \tilde{\ell}_2 \) for constructing \( f_* \)]{
      \begin{tabular}{@{}lcc@{}}
        \toprule
        Parameters of \( f \) & \( \tilde{ \ell }_1 \) & \( \tilde{ \ell }_2 \)  \\
        \midrule
        \( \beta \in [0,1) \) & \(  1/k_2 \) & \(  1/k_1 \)   \\
        \( \beta^+ < 1 \leq \beta^- \) & \(  \ell_1  \) & \(  \beta^- / k_1 \)   \\
        \( \beta^- < 1 \leq \beta^+  \) & \(  \beta^+ / k_2  \) & \(  \ell_2  \)   \\
        \( \beta^+, \beta^- \geq 1 \) and \( k_1 \geq k_2 \)  & \(  \beta^+ / k_2  \) & \(  \ell_2 \)   \\
        \( \beta^+, \beta^- \geq 1 \) and \( k_1 < k_2 \)  & \(  \ell_1  \) & \(  \beta^-/ k_1 \)   \\
        \bottomrule
    \end{tabular}
    }
    \caption{Choosing \(\tilde{\ell}_1, \tilde{\ell}_2 \) to complete the proof of Theorem \ref{thm:density-main}.}
  \end{table}
\end{proof}

\newpage

\begin{bibdiv}
\begin{biblist}

  \bib{AarDen2001}{article}{
    author = {Aaronson, Jon and Denker, Manfred},
    date-added = {2023-06-14 14:31:37 -0300},
    date-modified = {2023-06-14 14:31:37 -0300},
    doi = {10.1142/S0219493701000114},
    fjournal = {Stochastics and Dynamics},
    issn = {0219-4937},
    journal = {Stoch. Dyn.},
    mrclass = {37A50 (28D05 37A05 60F05)},
    mrnumber = {1840194},
    mrreviewer = {Dieter H. Mayer},
    number = {2},
    pages = {193--237},
    title = {Local limit theorems for partial sums of stationary sequences generated by {G}ibbs-{M}arkov maps},
    url = {https://doi-org.uoelibrary.idm.oclc.org/10.1142/S0219493701000114},
    volume = {1},
    year = {2001}
  }

\bib{AarThaZwe05}{article}{
      author={Aaronson, Jon},
      author={Thaler, Maximilian},
      author={Zweim{\"u}ller, Roland},
       title={Occupation times of sets of infinite measure for ergodic
  transformations},
        date={2005},
     journal={Ergodic Theory and Dynamical Systems},
}

\bib{Alv20}{book}{
      author={Alves, Jos{\'e}~F.},
       title={Nonuniformly hyperbolic attractors. geometric and probabilistic
  aspects},
      series={Springer Monographs in Mathematics},
   publisher={Springer International Publishing},
        date={2020},
}

\bib{AlvDiaLuz17}{article}{
      author={Alves, Jos{\'e}~F.},
      author={Dias, Carla~L.},
      author={Luzzatto, Stefano},
      author={Pinheiro, Vilton},
       title={S{RB} measures for partially hyperbolic systems whose central
  direction is weakly expanding},
        date={2017},
     journal={J. Eur. Math. Soc. (JEMS)},
      volume={19},
      number={10},
       pages={2911\ndash 2946},
}

\bib{AlvLuzPin05}{article}{
      author={Alves, Jos{\'e}~F.},
      author={Luzzatto, Stefano},
      author={Pinheiro, Vilton},
       title={Markov structures and decay of correlations for non-uniformly
  expanding dynamical systems},
        date={2005},
     journal={Ann. Inst. H. Poincar\'e Anal. Non Lin\'eaire},
      volume={22},
      number={6},
       pages={817\ndash 839},
}

\bib{AnoSin67}{article}{
      author={Anosov, D~V},
      author={Sinai, Yakov~G.},
       title={{Some Smooth Ergodic Systems}},
        date={1967},
     journal={Russian Mathematical Surveys},
      volume={103},
}

\bib{AraLuzVia09}{article}{
      author={Ara{\'u}jo, V{\'{\i}}tor},
      author={Luzzatto, Stefano},
      author={Viana, Marcelo},
       title={Invariant measures for interval maps with critical points and
  singularities},
        date={2009},
     journal={Adv. Math.},
      volume={221},
      number={5},
       pages={1428\ndash 1444},
}

\bib{AraPin21}{article}{
      author={Ara{\'{u}}jo, V{\'{i}}tor},
      author={Pinheiro, Vilton},
       title={{Abundance of wild historic behavior}},
        date={2021},
     journal={Bulletin of the Brazilian Mathematical Society. New Series.},
      volume={52},
}

\bib{Bac99}{article}{
      author={Bachurin, P~S},
       title={The connection between time averages and minimal attractors},
        date={1999},
     journal={Russian Mathematical Surveys},
      volume={54},
      number={6},
       pages={1233\ndash 1235},
}

\bib{BahGalNis18}{article}{
      author={Bahsoun, Wael},
      author={Galatolo, Stefano},
      author={Nisoli, Isaia},
      author={Niu, Xiaolong},
       title={A rigorous computational approach to linear response},
        date={2018},
     journal={Nonlinearity},
      volume={31},
}

\bib{BahSau16}{article}{
      author={Bahsoun, Wael},
      author={Saussol, Beno{\^\i}t},
       title={Linear response in the intermittent family: differentiation in a
  weighted $c^0$-norm},
        date={2016},
     journal={Discrete and Continuous Dynamical Systems},
}

\bib{BalTod16}{article}{
      author={Baladi, V.},
      author={Todd, M.},
       title={Linear response for intermittent maps},
        date={2016},
     journal={Communications in Mathematical Physics},
      volume={347},
       pages={857\ndash 874},
}

\bib{BarKirNak20}{article}{
      author={Barrientos, Pablo~G.},
      author={Kiriki, Shin},
      author={Nakano, Yushi},
      author={Raibekas, Artem},
      author={Soma, Teruhiko},
       title={Historic behavior in non-hyperbolic homoclinic classes},
        date={2020},
     journal={Proceedings of the American Mathematical Society},
      volume={148},
       pages={1195\ndash 1206},
}

\bib{BerBie22}{article}{
      author={Berger, Pierre},
      author={Biebler, Sebastien},
       title={Emergence of wandering stable components},
        date={2022},
     journal={Journal of the American Mathematical Society},
      volume={36},
}

\bib{Bir31}{article}{
      author={Birkhoff, George~David},
       title={{Proof of the Ergodic Theorem.}},
        date={1931},
     journal={Proceedings of the National Academy of Sciences of the United
  States of America},
      volume={17},
      number={12},
       pages={656\ndash 660},
}

\bib{Bow75}{book}{
      author={Bowen, Rufus},
       title={Equilibrium states and the ergodic theory of {A}nosov
  diffeomorphisms},
      series={Lecture Notes in Mathematics, Vol. 470},
   publisher={Springer-Verlag},
     address={Berlin},
        date={1975},
}

\bib{BruLep13}{article}{
      author={Bruin, Henk},
      author={Leplaideur, Renaud},
       title={Renormalization, thermodynamic formalism and quasi-crystals in
  subshifts},
        date={2013feb},
     journal={Communications in Mathematical Physics},
      volume={321},
      number={1},
       pages={209\ndash 247},
}

\bib{BruTerTod19}{article}{
      author={Bruin, Henk},
      author={Terhesiu, Dalia},
      author={Todd, Mike},
       title={The pressure function for infinite equilibrium measures},
        date={2019jun},
     journal={Israel Journal of Mathematics},
      volume={232},
      number={2},
       pages={775\ndash 826},
}

\bib{Bur21}{article}{
      author={Burguet, David},
       title={{SRB} measures for ${C}^\infty$ surface diffeomorphisms},
        date={2021},
     journal={Preprint},
}

\bib{Buz00}{article}{
      author={Buzzi, J{\'e}r{\^o}me},
       title={Absolutely continuous invariant probability measures for
  arbitrary expanding piecewise r-analytic mappings of the plane},
        date={2000},
     journal={Ergodic Theory Dynam. Systems},
      volume={20},
      number={3},
       pages={697\ndash 708},
}

\bib{BuzCroSar22}{article}{
      author={Buzzi, J{\'e}r{\^o}me},
      author={Crovisier, Sylvain},
      author={Sarig, Omri},
       title={Another proof of burguet's existence theorem for srb measures of
  $c^\infty$ surface diffeomorphisms},
        date={2022},
     journal={Preprint},
}

\bib{CamIso95}{article}{
      author={Campanino, Massimo},
      author={Isola, Stefano},
       title={Statistical properties of long return times in type i
  intermittency},
        date={1995},
     journal={Forum Mathematicum},
      volume={7},
      number={7},
}

\bib{CliLuzPes17}{article}{
      author={Climenhaga, Vaughn},
      author={Luzzatto, Stefano},
      author={Pesin, Yakov},
       title={The geometric approach for constructing {S}inai-{R}uelle-{B}owen
  measures},
        date={2017},
     journal={Journal of Statistical Physics},
      volume={166},
}

\bib{CliLuzPes23}{article}{
      author={Climenhaga, Vaughn},
      author={Luzzatto, Stefano},
      author={Pesin, Yakov},
       title={Srb measures and young towers for surface diffeomorphisms},
        date={2023},
     journal={Annales Henri Poincar{\'{e}}},
      volume={23},
}

\bib{CoaHolTer19}{article}{
      author={Coates, Douglas},
      author={Holland, Mark},
      author={Terhesiu, Dalia},
       title={Limit theorems for wobbly interval intermittent maps},
        date={201910},
}

\bib{CoaLuzMub22}{article}{
      author={Coates, Douglas},
      author={Luzzatto, Stefano},
      author={Muhammad, Mubarak},
       title={Doubly intermittent full branch maps with critical points and
  singularities},
        date={2023/09/01},
     journal={Communications in Mathematical Physics},
      volume={402},
      number={2},
       pages={1845\ndash 1878},
         url={https://doi.org/10.1007/s00220-023-04766-x},
}

\bib{ColVar01}{article}{
      author={Colli, Eduardo},
      author={Vargas, Edson},
       title={Non-trivial wandering domains and homoclinic bifurcations},
        date={2001},
     journal={Ergodic Theory and Dynamical Systems},
}

\bib{CriHayMar10}{article}{
      author={Cristadoro, Giampaolo},
      author={Haydn, Nicolai},
      author={Marie, Philippe},
      author={Vaienti, Sandro},
       title={Statistical properties of intermittent maps with unbounded
  derivative},
        date={2010},
     journal={Nonlinearity},
}

\bib{CroYanZha20}{article}{
      author={Crovisier, Sylvain},
      author={Yang, Dawei},
      author={Zhang, Jinhua},
       title={Empirical measures of partially hyperbolic attractors},
        date={2020},
     journal={Communications in Mathematical Physics},
}

\bib{Cui21}{article}{
      author={Cui, Hongfei},
       title={Invariant densities for intermittent maps with critical points},
        date={2021},
     journal={Journal of Difference Equations and Applications},
      volume={27},
      number={3},
       pages={404\ndash 421},
}

\bib{DiaHolLuz06}{article}{
      author={Diaz-Ordaz, Karla},
      author={Holland, Mark},
      author={Luzzatto, Stefano},
       title={Statistical properties of one-dimensional maps with critical
  points and singularities},
        date={2006},
     journal={Stochastics and Dynamics},
      volume={6},
      number={4},
}

\bib{Dol04}{incollection}{
    author = {Dolgopyat, Dmitry},
     title = {Prelude to a kiss},
 booktitle = {Modern dynamical systems and applications},
     pages = {313--324},
      year = {2004},
}

\bib{Dua12}{article}{
      author={Duan, Y.},
       title={A.c.i.m for random intermittent maps: existence, uniqueness and
  stochastic stability},
        date={2012},
     journal={Dynamical Systems. An International Journal},
}

\bib{FisLop01}{article}{
      author={Fisher, Albert~M},
      author={Lopes, Artur},
       title={Exact bounds for the polynomial decay of correlation, 1/fnoise
  and the {CLT} for the equilibrium state of a non-h{\"o}lder potential},
        date={2001jul},
     journal={Nonlinearity},
      volume={14},
      number={5},
       pages={1071\ndash 1104},
}

\bib{FreFreTod13}{article}{
      author={Freitas, Ana Cristina~Moreira},
      author={Freitas, Jorge~Milhazes},
      author={Todd, Mike},
       title={The compound poisson limit ruling periodic extreme behaviour of
  non-uniformly hyperbolic dynamics},
        date={2013mar},
     journal={Communications in Mathematical Physics},
      volume={321},
      number={2},
       pages={483\ndash 527},
}

\bib{FreFreTod16}{article}{
      author={Freitas, Ana Cristina~Moreira},
      author={Freitas, Jorge~Milhazes},
      author={Todd, Mike},
      author={Vaienti, Sandro},
       title={Rare events for the manneville-pomeau map},
        date={2016nov},
     journal={Stochastic Processes and their Applications},
      volume={126},
      number={11},
       pages={3463\ndash 3479},
}

\bib{FroMurSta11}{article}{
      author={Froyland, Gary},
      author={Murray, Rua},
      author={Stancevic, Ognjen},
       title={Spectral degeneracy and escape dynamics for intermittent maps
  with a hole},
        date={2011jul},
     journal={Nonlinearity},
      volume={24},
      number={9},
       pages={2435\ndash 2463},
}

\bib{GalHolPer21}{article}{
      author={Galatolo, Stefano},
      author={Holland, Mark},
      author={Persson, Tomas},
      author={Zhang, Yiwei},
       title={Anomalous time-scaling of extreme events in infinite systems and
  birkhoff sums of infinite observables},
        date={202110},
     journal={Discrete and Continuous Dynamical Systems},
}

  \bib{Gou2010}{article}{
    author = {Gou\"{e}zel, S\'{e}bastien},
    date-added = {2023-08-31 14:52:40 -0300},
    date-modified = {2023-08-31 14:52:40 -0300},
    doi = {10.1007/s11856-010-0092-z},
    fjournal = {Israel Journal of Mathematics},
    issn = {0021-2172},
    journal = {Israel J. Math.},
    mrclass = {37A50 (28D05 37A30 60F05)},
    mrnumber = {2735054},
    mrreviewer = {Michael M. Bj\"{o}rklund},
    pages = {1--41},
    title = {Characterization of weak convergence of {B}irkhoff sums for {G}ibbs-{M}arkov maps},
    url = {https://doi.org/10.1007/s11856-010-0092-z},
    volume = {180},
    year = {2010}
  }
  
\bib{Gou10a}{article}{
      author={Gou{\"{e}}zel, S{\'{e}}bastien},
       title={{Almost sure invariance principle for dynamical systems by
  spectral methods}},
        date={2010jul},
     journal={The Annals of Probability},
      volume={38},
      number={4},
       pages={1639\ndash 1671},
}

\bib{Her18}{incollection}{
      author={Herman, Michel},
       title={An example of non-convergence of birkhoff sums},
        date={2018},
   booktitle={Notes inachev{\'e}es de michael r. herman s{\'e}lectionn{\'e}es
  par jean-christophe yoccoz},
   publisher={Soci{\'e}t{\'e} Math{\'e}matique de France},
}

\bib{HofKel90}{article}{
      author={Hofbauer, Franz},
      author={Keller, Gerhard},
       title={Quadratic maps without asymptotic measure},
        date={1990},
     journal={Comm. Math. Phys.},
      volume={127},
      number={2},
       pages={319\ndash 337},
}

\bib{HofKel95}{incollection}{
      author={Hofbauer, Franz},
      author={Keller, Gerhard},
       title={{Quadratic maps with maximal oscillation}},
        date={1995},
   booktitle={Algorithms, fractals, and dynamics},
      editor={Takahashi, Y.},
   publisher={Springer, Boston, MA},
       pages={89\ndash 94},
}

\bib{HuYou95}{article}{
      author={Hu, Huyi},
      author={Young, Lai-Sang},
       title={Nonexistence of sbr measures for some diffeomorphisms that are
  `almost anosov'},
        date={1995},
     journal={Ergodic Theory and Dynamical Systems},
      volume={15},
}

\bib{Ino00}{article}{
      author={Inoue, Tomoki},
       title={Sojourn times in small neighborhoods of indifferent fixed points
  of one-dimensional dynamical systems},
        date={2000},
     journal={Ergodic Theory and Dynamical Systems},
      volume={20},
}

\bib{JarTol04}{book}{
      author={J{\"a}rvenp{\"a}{\"a}, Esa},
      author={Tolonen, Tapani},
       title={Natural ergodic measures are not always observable},
   publisher={University of Jyv{\"a}skyl{\"a}},
        date={2005},
}

\bib{KanKirLi16}{article}{
      author={Kanagawa, Hiratuka},
      author={Kiriki, Shin},
      author={Li, Ming-Chia},
       title={Geometric {L}orenz flows with historic behaviour},
        date={2016},
     journal={Discrete and Continuous Dynamical Systems},
      volume={36},
      number={12},
       pages={7021\ndash 7028},
}

\bib{Kel04}{article}{
      author={Keller, Gerhard},
       title={{Completely mixing maps without limit measure}},
        date={2004},
     journal={Colloquium Mathematicum},
      volume={100},
      number={1},
       pages={73\ndash 76},
}

\bib{KirLiSom10}{article}{
      author={Kiriki, Shin},
      author={Li, Ming-Chia},
      author={Soma, Teruhiko},
       title={{Coexistence of invariant sets with and without SRB measures in
  Henon family}},
        date={2010},
     journal={Nonlinearity},
      volume={23},
      number={9},
       pages={2253\ndash 2269},
}

\bib{KirLiNak22}{article}{
      author={Kiriki, Shin},
      author={Li, Xiaolong},
      author={Nakano, Yushi},
      author={Soma, Teruhiko},
       title={Abundance of observable lyapunov irregular sets},
        date={2022},
     journal={Communications in Mathematical},
      volume={Physics},
       pages={1\ndash 29},
}

\bib{KirNakSom19}{article}{
      author={Kiriki, Shin},
      author={Nakano, Yushi},
      author={Soma, Teruhiko},
       title={Historic behaviour for nonautonomous contraction mappings},
        date={2019},
     journal={Nonlinearity},
      volume={32},
       pages={1111\ndash 1124},
}

\bib{KirNakSom21}{article}{
      author={Kiriki, Shin},
      author={Nakano, Yushi},
      author={Soma, Teruhiko},
       title={Historic and physical wandering domains for wild
  blender-horseshoes},
        date={2021},
     journal={Preprint},
}

\bib{KirNakSom22}{article}{
      author={Kiriki, Shin},
      author={Nakano, Yushi},
      author={Soma, Teruhiko},
       title={Emergence via non-existence of averages},
        date={2022},
     journal={Advances in Mathematics},
      volume={400},
       pages={1\ndash 30},
}

\bib{KirSom17}{article}{
      author={Kiriki, Shin},
      author={Soma, Teruhiko},
       title={Takens' last problem and existence of non-trivial wandering
  domains},
        date={2017},
     journal={Advances in Mathematics,},
      volume={306},
       pages={pp.},
}

\bib{Kle06}{article}{
      author={Kleptsyn, V~A},
       title={An example of non-coincidence of minimal and statistical
  attractors},
        date={2006},
     journal={Ergodic Theory and Dynamical Systems},
      volume={26},
}

\bib{Kor16}{article}{
      author={Korepanov, Alexey},
       title={Linear response for intermittent maps with summable and
  nonsummable decay of correlations},
        date={2016},
     journal={Nonlinearity},
}

\bib{LabRod17}{article}{
      author={Labouriau, Isabel~S.},
      author={Rodrigues, Alexandre A.~P.},
       title={On takens' last problem: tangencies and time averages near
  heteroclinic networks},
        date={2017},
     journal={Nonlinearity},
      volume={30},
       pages={1876\ndash 1910},
}

\bib{LasYor73}{article}{
      author={Lasota, A.},
      author={Yorke, James~A.},
       title={On the existence of invariant measures for piecewise monotonic
  transformations},
        date={1973},
     journal={Trans. Amer. Math. Soc.},
      volume={186},
       pages={481\ndash 488},
}

\bib{LivSauVai99}{article}{
      author={Liverani, Carlangelo},
      author={Saussol, Beno{\^{\i}}t},
      author={Vaienti, Sandro},
       title={A probabilistic approach to intermittency},
        date={1999},
     journal={Ergodic Theory Dynam. Systems},
      volume={19},
      number={3},
       pages={671\ndash 685},
}

\bib{Mel08}{article}{
      author={Melbourne, Ian},
       title={Large and moderate deviations for slowly mixing dynamical
  systems},
        date={2008nov},
     journal={Proceedings of the American Mathematical Society},
      volume={137},
      number={5},
       pages={1735\ndash 1741},
}

\bib{MelTer12}{article}{
      author={Melbourne, Ian},
      author={Terhesiu, Dalia},
       title={First and higher order uniform dual ergodic theorems for
  dynamical systems with infinite measure},
        date={2012nov},
     journal={Israel Journal of Mathematics},
      volume={194},
      number={2},
       pages={793\ndash 830},
}

\bib{NicTorVai16}{article}{
      author={Nicol, Matthew},
      author={T{\=o}r{\=o}k, Andrew},
      author={Vaienti, Sandro},
       title={Central limit theorems for sequential and random intermittent
  dynamical systems},
        date={2016},
     journal={Ergodic Theory and Dynamical Systems},
      volume={38},
      number={3},
       pages={1127\ndash 1153},
}

\bib{Nol20}{book}{
      author={Nolan, John~P},
       title={Univariate stable distributions: models for heavy tailed data},
   publisher={Springer},
        date={2020},
}

\bib{KarAsh11}{article}{
      author={{\=o}zkan Karabacak},
      author={Ashwin, Peter},
       title={On statistical attractors and the convergence of time averages},
        date={2011},
     journal={Mathematical Proceedings of the Cambridge Philosophical Society},
      volume={150},
}

\bib{Pal08}{article}{
      author={Palis, J},
       title={Open questions leading to a global perspective in dynamics},
        date={2008},
     journal={Nonlinearity},
      volume={21},
      number={4},
}

\bib{Pal15}{article}{
      author={Palis, J},
       title={Open questions leading to a global perspective in dynamics
  (corrigendum)},
        date={2015},
     journal={Nonlinearity},
      volume={28},
      number={3},
}

\bib{Pet2002}{article}{
  author={Petrov, Valentin~V.},
  title={A note on the {B}orel-{C}antelli lemma},
  date={2002},
  ISSN={0167-7152},
  journal={Statist. Probab. Lett.},
  volume={58},
  number={3},
  pages={283\ndash 286},
  url={https://doi.org/10.1016/S0167-7152(02)00113-X},
  review={\MR{1921874}},
}

\bib{Pia80}{article}{
      author={Pianigiani, Giulio},
       title={First return map and invariant measures},
        date={1980},
     journal={Israel J. Math.},
      volume={35},
      number={1-2},
       pages={32\ndash 48},
}

\bib{Pin06}{article}{
      author={Pinheiro, Vilton},
       title={Sinai-{R}uelle-{B}owen measures for weakly expanding maps},
        date={2006},
     journal={Nonlinearity},
      volume={19},
      number={5},
       pages={1185\ndash 1200},
}

\bib{PolSha09}{article}{
      author={Pollicott, Mark},
      author={Sharp, Richard},
       title={Large deviations for intermittent maps},
        date={2009jul},
     journal={Nonlinearity},
      volume={22},
      number={9},
       pages={2079\ndash 2092},
}

\bib{PolWei99}{article}{
      author={Pollicott, Mark},
      author={Weiss, Howard},
       title={Multifractal analysis of lyapunov exponent for continued fraction
  and manneville-pomeau transformations and applications to diophantine
  approximation},
        date={1999nov},
     journal={Communications in Mathematical Physics},
      volume={207},
      number={1},
       pages={145\ndash 171},
}

\bib{PomMan80}{article}{
      author={Pomeau, Yves},
      author={Manneville, Paul},
       title={Intermittent transition to turbulence in dissipative dynamical
  systems},
        date={1980},
     journal={Communications in Mathematical Physics},
      volume={74},
       pages={189\ndash 197},
}

\bib{Rue76}{article}{
      author={Ruelle, David},
       title={A measure associated with axiom-{A} attractors},
        date={1976},
     journal={Amer. J. Math.},
      volume={98},
      number={3},
       pages={619\ndash 654},
}

\bib{Ruz15}{article}{
      author={Ruziboev, Marks},
       title={Decay of correlations for invertible systems with non-h{\"o}lder
  observables},
        date={2015},
     journal={Dynamical Systems. An International Journal},
}

\bib{Ruz18}{incollection}{
      author={Ruziboev, Marks},
       title={Almost sure rates of mixing for random intermittent maps},
        date={201801},
   booktitle={Differential equations and dynamical systems},
   publisher={Springer},
}

\bib{Sar01}{article}{
      author={Sarig, Omri},
       title={{Phase transitions for countable Markov shifts}},
        date={2001},
     journal={Communications in Mathematical Physics},
      volume={217},
      number={3},
       pages={555\ndash 577},
}

\bib{Saw66}{article}{
      author={Sawyer, S.},
       title={Maximal inequalities of weak type},
        date={1966},
     journal={Annals of Mathematics},
      volume={84},
}

\bib{SheStr13}{article}{
      author={Shen, Weixiao},
      author={van Strien, Sebastian},
       title={On stochastic stability of expanding circle maps with neutral
  fixed points},
        date={2013sep},
     journal={Dynamical Systems},
      volume={28},
      number={3},
       pages={423\ndash 452},
}

\bib{Sin72}{article}{
      author={Sinai, Ja.~G.},
       title={Gibbs measures in ergodic theory},
        date={1972},
     journal={Uspehi Mat. Nauk},
      volume={27},
      number={4},
       pages={21\ndash 64},
}

\bib{Tak94}{article}{
      author={Takens, Floris},
       title={Heteroclinic attractors: time averages and moduli of topological
  conjugacy.},
        date={1994},
     journal={Bullettin of the Brazilian Mathematical Society},
      volume={25},
}

\bib{Tak08}{article}{
      author={Takens, Floris},
       title={Orbits with historic behaviour, or nonexistence of averages},
        date={2008},
     journal={Nonlinearity},
      volume={21},
}

\bib{Tal20}{article}{
      author={Talebi, Amin},
       title={Statistical (in)stability and non-statistical dynamics},
        date={2020},
     journal={Preprint},
}

\bib{Tal22}{article}{
      author={Talebi, Amin},
       title={Non-statistical rational maps},
        date={2022},
     journal={Mathematische Zeitschrift},
}

\bib{Ter13}{article}{
      author={Terhesiu, Dalia},
       title={Improved mixing rates for infinite measure-preserving systems},
        date={2013aug},
     journal={Ergodic Theory and Dynamical Systems},
      volume={35},
      number={2},
       pages={585\ndash 614},
}

\bib{Ter15}{article}{
      author={Terhesiu, Dalia},
       title={Mixing rates for intermittent maps of high exponent},
        date={2015},
     journal={Probability Theory and Related Fields},
      volume={166},
      number={3-4},
       pages={1025\ndash 1060},
}

\bib{Tha80}{article}{
      author={Thaler, Maximilian},
       title={Estimates of the invariant densities of endomorphisms with
  indifferent fixed points},
        date={1980},
     journal={Israel Journal of Mathematics},
      volume={37},
      number={4},
       pages={303\ndash 314},
}

\bib{Tha83}{article}{
      author={Thaler, Maximilian},
       title={Transformations on [0, 1] with infinite invariant measures},
        date={1983},
     journal={Israel Journal of Mathematics},
      volume={46},
      number={1-2},
       pages={67\ndash 96},
}

\bib{Tha95}{article}{
      author={Thaler, Maximilian},
       title={The invariant densities for maps modeling intermittency},
        date={1995},
     journal={Journal of Statistical Physics},
      volume={79},
      number={3-4},
       pages={739\ndash 741},
}

\bib{Tha95a}{article}{
      author={Thaler, Maximilian},
       title={A limit theorem for the perron-frobenius operator of
  transformations on [0,1] with indifferent fixed points},
        date={1995},
     journal={Israel Journal of Mathematics},
      volume={91},
      number={1-3},
       pages={111\ndash 127},
}

\bib{Tha00}{article}{
      author={Thaler, Maximilian},
       title={The asymptotics of the perron-frobenius operator of a class of
  interval maps preserving infinite measures},
        date={2000},
     journal={Studia Mathematica},
      volume={143},
      number={2},
       pages={103\ndash 119},
}

\bib{Tha05}{article}{
      author={Thaler, Maximilian},
       title={Asymptotic distributions and large deviations for iterated maps
  with an indifferent fixed point},
        date={2005},
     journal={Stochastics and Dynamics},
      volume={05},
      number={03},
       pages={425\ndash 440},
}

\bib{Tsu00c}{article}{
      author={Tsujii, Masato},
       title={Absolutely continuous invariant measures for piecewise
  real-analytic expanding maps on the plane},
        date={2000},
     journal={Comm. Math. Phys.},
      volume={208},
      number={3},
       pages={605\ndash 622},
}

\bib{Tsu01a}{article}{
      author={Tsujii, Masato},
       title={Absolutely continuous invariant measures for expanding piecewise
  linear maps},
        date={2001},
     journal={Invent. Math.},
      volume={143},
      number={2},
       pages={349\ndash 373},
}

\bib{Tsu05}{article}{
      author={Tsujii, Masato},
       title={Physical measures for partially hyperbolic surface
  endomorphisms},
        date={2005},
     journal={Acta Math.},
      volume={194},
      number={1},
       pages={37\ndash 132},
}

\bib{Vec22}{article}{
      author={Veconi, Dominic},
       title={SRB measures of singular hyperbolic attractors},
        date={2022},
     journal={Discrete and Continuous Dynamical Systems},
      volume={42},
}

\bib{You99}{article}{
      author={Young, Lai-Sang},
       title={Recurrence times and rates of mixing},
        date={1999},
     journal={Israel J. Math.},
      volume={110},
       pages={153\ndash 188},
}

\bib{Zwe98}{article}{
      author={Zweim{\"{u}}ller, Roland},
       title={{Ergodic structure and invariant densities of non-Markovian
  interval maps with indifferent fixed points}},
        date={1998},
     journal={Nonlinearity},
      volume={1263},
}

\bib{Zwe00}{article}{
      author={Zweim{\"{u}}ller, Roland},
       title={{Ergodic properties of infinite measure-preserving interval maps
  with indifferent fixed points}},
        date={2000},
     journal={Ergodic Theory and Dynamical Systems},
      volume={20},
      number={5},
       pages={1519\ndash 1549},
}

\bib{Zwe02}{article}{
      author={Zweim{\"{u}}ller, Roland},
       title={{Exact $C^\infty$ covering maps of the circle without (weak)
  limit measure}},
        date={2002},
     journal={Colloquium Mathematicum},
      volume={93},
      number={2},
       pages={295\ndash 302},
}

\bib{Zwe03}{article}{
      author={Zweim{\"{u}}ller, Roland},
       title={{Stable limits for probability preserving maps with indifferent
  fixed points}},
        date={2003},
     journal={Stochastics and Dynamics},
      volume={3},
      number={1},
       pages={83\ndash 99},
}

\end{biblist}
\end{bibdiv}

\end{document}